\newcommand{\Int}{\ensuremath{{\textstyle\int}}}
\newcommand{\Der}{\ensuremath{\partial}}
\newcommand{\id}{\ensuremath{{\mathrm{id}}}}
\newcommand{\E}{\ensuremath{{\mathrm{E}}}}
\newcommand{\Q}{\ensuremath{{\mathrm{Q}}}}
\newcommand{\T}{\ensuremath{{\mathrm{T}}}}
\newcommand{\ep}{\ensuremath{\varepsilon}}
\newcommand{\et}{\ensuremath{1_\mathcal{T}}}
\newcommand{\Sym}{\ensuremath{{\mathrm{Sym}}}}
\DeclareMathOperator{\const}{Const}
\DeclareMathOperator{\linspan}{span}
\DeclareMathOperator{\im}{im}
\DeclareMathOperator{\sh}{sh}
\newtheorem{theorem}{\bf Theorem}
\newtheorem{definition}[theorem]{\bf Definition}
\newtheorem{lemma}[theorem]{\bf Lemma}
\newtheorem{corollary}[theorem]{\bf Corollary}
\newtheorem{conjecture}[theorem]{\bf Conjecture}
\theoremstyle{remark}
\newtheorem{remark}[theorem]{\bf Remark}
\newtheorem{example}[theorem]{\bf Example}
\title{The integro-differential closure of a commutative differential ring}
\author{Clemens G. Raab\textsuperscript{a,}\footnote{Corresponding author}\ \ and Georg Regensburger\textsuperscript{b}}
\date{31 October 2025}
\begin{document}
\maketitle

\begin{center}
\textsuperscript{a}RICAM, Austrian Academy of Sciences, Linz, Austria\\
\href{mailto:clemens.raab@ricam.oeaw.ac.at}{\tt clemens.raab@ricam.oeaw.ac.at}\\
\textsuperscript{b}Institute of Mathematics, University of Kassel, Germany\\
\href{mailto:regensburger@mathematik.uni-kassel.de}{\tt regensburger@mathematik.uni-kassel.de}
\end{center}

\begin{abstract}
 An integro-differential ring is a differential ring that is closed under an integration operation satisfying the fundamental theorem of calculus.
 Via the Newton--Leibniz formula, a generalized evaluation is defined in terms of integration and differentiation.
 The induced evaluation is not necessarily multiplicative, which allows to model functions with singularities and leads to generalized shuffle relations.
 In general, not every element of a differential ring has an antiderivative in the same ring.
 Starting from a commutative differential ring and a direct decomposition into integrable and non-integrable elements, we construct the free integro-differential ring.
 This integro-differential closure contains all nested integrals over elements of the original differential ring.
 We exhibit the relations satisfied by generalized evaluations of products of nested integrals.
 Investigating these relations of constants, we characterize in terms of Lyndon words certain evaluations of products that determine all others.
 We also analyze the relation of the free integro-differential ring with the shuffle algebra.
 To preserve integrals in the original differential ring for computations in its integro-differential closure, we introduce the notion of quasi-integro-differential rings and give two adapted constructions of the integro-differential closure.
 Finally, in a given integro-differential ring, we consider the internal integro-differential closure of a differential subring and identify it as quotient of the free integro-differential ring by certain constants.
\end{abstract}

\paragraph{Keywords}
Quasi-integro-differential rings, quasi-integration, nested integrals, generalized shuffle relations, universal construction

\section{Introduction}

Methods to compute with and simplify nested integrals have been considered in many contexts for different classes of integrands.
For example, nested integrals of rational functions were already investigated by Kummer \cite{Kummer} and appear in the study of linear differential equations \cite{Poincare}.
They have been named hyperlogarithms \cite{LappoDanilevski} and satisfy many properties, see e.g.\ \cite{Wechsung}.
In theoretical physics, they were studied as well, where they arise in computations for quantum field theories, see e.g.\ \cite{WeinzierlBook,AbreuBrittoDuhr,BluemleinSchneider} and references therein.
Particular cases of functions defined by nested integrals of rational functions include polylogarithms and their generalizations, see e.g.\ \cite{RemiddiVermaseren,Cyclotomic}.
Nested integrals with integrands involving square roots of rational functions were analyzed more recently in \cite{Binomial}.
Even more general integrands are given by hyperexponential functions, leading to d'Alembertian functions \cite{AbramovPetkovsek}.

In this paper, we analyze from an algebraic point of view the underlying structure of nested integrals that is independent of the class of integrands.
This structure relies on a decomposition of integrands into integrable and non-integrable parts, which depends on the class of integrands and needs to be worked out for specific cases.
Based on such a decomposition of integrands, we work out normal forms of nested integrals and show how to compute with them.
Moreover, we discuss how constants arising in generalized shuffle relations for products of nested integrals have to be taken into account.

Algebraically, integrands and their nested integrals are considered as elements of a commutative ring that is closed under differentiation $\Der$ and integration $\Int$ such that the fundamental theorem of calculus $\Der\Int{f}=f$ holds.
These two operations give rise to the induced evaluation $\E$ on the ring by the Newton--Leibniz formula $\E{f}:=f-\Int\Der{f}$.
Such rings have been studied in the literature already, starting with integro-differential algebras over fields \cite{RosenkranzRegensburgerBVPFactor}, modelling integration and point evaluation of smooth functions, and differential Rota--Baxter algebras over rings \cite{GuoKeigherFreeDRB}.
In \cite{GuoRegensburgerRosenkranz}, these two concepts are compared and the free integro-differential algebra (with weight) over a ring is constructed on a (regular) differential ring.
The papers \cite{GaoGuoZheng,GaoGuoRosenkranz} deal with an explicit construction of the 
free integro-differential algebra over a set via Gr\"obner-Shirshov bases in the commutative and in the noncommutative case, see also Remark~\ref{rem:freeonset} later.
Recently, to enable modelling also functions with singularities, integro-differential rings with generalized evaluation have been introduced in \cite{RaabRegensburger}, which lead to generalized shuffle relations involving constants arising from evaluation.

For symbolic integration in commutative differential rings $(\mathcal{R},\Der)$, a basic problem is decomposing elements $f=\Der{g}+h$ such that $h$ is either zero or has no antiderivative in $\mathcal{R}$ and is minimal in some sense.
Considering the rational functions $(\mathcal{C}(x),\frac{d}{dx})$ over a field of characteristic zero, such $h$ can be obtained by collecting the terms with simple poles in the partial fraction decomposition of $f$ while $g$ sums antiderivatives of each remaining term.
Computation of this decomposition for rational functions without the use of partial fraction decomposition was already worked out in \cite{Ostrogradsky,HermiteRational}, see also \cite[Ch.~2]{Bronstein}, and was extended to square roots of rational functions in \cite{HermiteSqrt}.
Much later, in computer algebra, algorithms to compute decompositions for various other classes of integrands were investigated.
The algorithm presented in \cite{TragerRadical} can be used to decompose algebraic functions in fields generated by radicals of arbitrary order.
This has been generalized to fields generated by an arbitrary algebraic function \cite{TragerPhD,ChenKauersKoutschanAlgebraic,ChenDuKauersLazyHermite}.
Additive decompositions were also developed for fractions of differential polynomials \cite{BoullierEtAlDifferentialFractions} and in differential fields generated by logarithms and certain other primitives \cite{ChenDuLi,DuGuoLiWong} or by non-nested hyperexponential functions \cite{ChenDuGaoLi}.

Not only in differential rings or fields, but also for integration in modules generated by D-finite functions, algorithms to compute such additive decompositions were developed.
First, this was achieved for modules of a single hyperexponential function \cite{GeddesLeLi,BostanEtAlHyperexponential} and, in recent years, various algorithms for finitely generated modules of D-finite functions have been worked out, see \cite{ChenHoeijKauersKoutschan,BostanEtAlDFinite,Hoeven,ChenDuKauersDFinite}.

In this work, we start from a commutative differential ring $(\mathcal{R},\Der)$ with constants $\mathcal{C}$ and assume that such decomposition $f=\Der{g}+h$ exists for all $f\in\mathcal{R}$.
In particular, we fix direct complements of $\im(\Der)$ and $\ker(\Der)=\mathcal{C}$ in $\mathcal{R}$ over $\mathcal{C}$ corresponding to a quasi-integration, which is a $\mathcal{C}$-linear map $\Q$ selecting a specific antiderivative.
For instance, considering the additive decomposition $f=\Der{g}+h$ of rational functions $(\mathcal{C}(x),\frac{d}{dx})$ as mentioned above, a natural choice for $g=\Q{f}$ is such that the partial fraction decomposition has no constant term.
In Section~\ref{sec:free}, we construct the free integro-differential ring $\mathrm{IDR}(\mathcal{R})$ on the differential ring $\mathcal{R}$ and prove the following main results characterizing this free object:
\begin{enumerate}
 \item $\mathrm{IDR}(\mathcal{R})$ is an integro-differential ring and $\mathcal{R}$ can be embedded into it as a differential subring, see Theorem~\ref{thm:embedding}.
 \item $\mathrm{IDR}(\mathcal{R})$ has the universal property: any differential ring homomorphism from $\mathcal{R}$ to an integro-differential ring $\mathcal{S}$ can be uniquely extended to an integro-differential homomorphism from $\mathrm{IDR}(\mathcal{R})$ to $\mathcal{S}$, see Theorem~\ref{thm:universal}.
\end{enumerate}
In this sense, $\mathrm{IDR}(\mathcal{R})$ is the (external) integro-differential closure of $\mathcal{R}$.
While the construction is similar to the one in \cite{GuoRegensburgerRosenkranz}, it is considerably more complicated due to the more general properties of integro-differential rings defined in \cite{RaabRegensburger}, see also Remark~\ref{rem:IDA}.
In particular, integration on $\mathrm{IDR}(\mathcal{R})$ is not a Rota--Baxter operator but satisfies the Rota--Baxter identity with with evaluation \eqref{eq:mRB}.
In Section~\ref{sec:prelim}, we recall this definition of integro-differential rings and the generalized shuffle relations satisfied by nested integrals in them.
Moreover, we exhibit relations among evaluations of products of nested integrals that hold in any integro-differential ring, see Corollary~\ref{cor:IDRConstRel}.

Related to the general properties of integro-differential rings mentioned, we investigate, in Section~\ref{sec:shufflealgebra}, the generalized shuffle relations in $\mathrm{IDR}(\mathcal{R})$ and, in Section~\ref{sec:constants}, the relations among evaluations.
In particular, Corollary~\ref{cor:basis} allows to translate an algebra basis of the shuffle algebra to an algebra basis of $\mathrm{IDR}(\mathcal{R})$ and Corollary~\ref{cor:gensC2} shows that certain evaluations of products of nested integrals are sufficient to express all other such evaluations.
Moreover, in Section~\ref{sec:independence}, Theorem~\ref{thm:independence} provides a criterion for the linear independence of nested integrals.

However, the integration operation of the free object $\mathrm{IDR}(\mathcal{R})$ inherently cannot yield any antiderivatives that already exist within $\mathcal{R}$.
Therefore, in Section~\ref{sec:ExtQIDR}, we give an adapted construction in order to also extend a given quasi-integration $\Q$ on $\Der\mathcal{R}$ to the integro-differential closure $\mathrm{IDR}_\Q(\mathcal{R})$.
This leads to Definition~\ref{def:quasiIDR} introducing the notion of quasi-integro-differential rings.
The integro-differential closure $\mathrm{IDR}_\Q(\mathcal{R})$ retaining integrals given by $\Q$ satisfies properties analogous to $\mathrm{IDR}(\mathcal{R})$ above, see Theorem~\ref{thm:IDRext} and Corollary~\ref{cor:universalExt}.
For example, the free integro-differential ring $\mathrm{IDR}(\mathcal{C}(x))$ necessarily has the somewhat unnatural property that $\Int1 \not\in \mathcal{C}(x)$.
In contrast, $\mathrm{IDR}_\Q(\mathcal{C}(x))$ satisfies $\Int1 = \Q1 \in \mathcal{C}(x)$, e.g.\ $\Q1=x$ for the particular $\Q$ mentioned above.
Similarly, this property is relevant for $\Q$ obtained in symbolic computation (see the references given above) by additive decompositions $f=\Der\Q{f}+h$ in other differential rings.

Sometimes, one would like to completely avoid the introduction of new constants when extending the given differential ring $(\mathcal{R},\Der)$.
To this end, we provide a greatly simplified construction of the external integro-differential closure without new constants in Section~\ref{sec:NNC}.

Finally, for a given integro-differential ring $\mathcal{S}$ that contains $\mathcal{R}$, we study in Section~\ref{sec:closure} the internal integro-differential closure of $\mathcal{R}$ in $\mathcal{S}$ as a quotient of the free object $\mathrm{IDR}(\mathcal{R})$, respecting the constants arising from evaluations of nested integrals in $\mathcal{S}$.
In particular, Theorem~\ref{thm:closure} provides under general conditions the constants that need to be factored out.
This is then applied in Example~\ref{ex:hyperlogs} to obtain the ring of hyperlogarithms as quotient of the free integro-differential ring $\mathrm{IDR}(\mathcal{C}(x))$ over rational functions.

\paragraph{Conventions and notation}
Throughout the paper, rings and algebras are implicitly assumed to have a unit element unless otherwise stated.
Consequently, maps between them are only considered homomorphisms if they respect the unit element and modules are assumed to be unitary.
Moreover, integro-differential rings refer to Definition~\ref{def:IDR} below, which was introduced in \cite{RaabRegensburger} and generalizes the definition given in \cite{JSC2018} by dropping the requirement that the evaluation is multiplicative.

Multiplication in given rings $\mathcal{C},\mathcal{R},\mathcal{S}$ and scalar multiplication by constants from $\mathcal{C}$ are denoted by concatenation, all multiplications in constructed rings like tensor, symmetric, and shuffle products $\otimes,\odot,\shuffle$ as well as other multiplications $\cdot,\circledast$ always use the multiplication symbol.
We do not assume any precedence order among different products, so we always use parentheses whenever two different products come together.
For the special linear maps of differentiation $\Der$, integration $\Int$, evaluation $\E$, and quasi-integration $\Q$, we use operator notation except when the argument is the result of a sum or product using an infix symbol.
E.g.\ the Leibniz rule for the derivative of products then reads $\Der{fg}=(\Der{f})g+f\Der{g}$ and the derivative of a tensor product reads $\Der(f_0\otimes{c}\otimes{f})$.

\section{Basics of integro-differential rings}
\label{sec:prelim}

For a differential ring $(\mathcal{R},\Der)$, with derivation $\Der$ satisfying the Leibniz rule
\[
 \Der{fg}=(\Der{f})g+f\Der{g},
\]
we denote its ring of constants by $\const_\Der(\mathcal{R})=\ker(\Der)$.
If the derivation is surjective, every element of $\mathcal{R}$ has an integral in $\mathcal{R}$ and we have the following notions.
\begin{definition}\cite{RaabRegensburger}\label{def:IDR}
 Let $(\mathcal{R},\Der)$ be a differential ring and let $\mathcal{C}:=\const_\Der(\mathcal{R})$.
 A $\mathcal{C}$-linear map $\Int\colon\mathcal{R}\to\mathcal{R}$ is called an \emph{integration} on $\mathcal{R}$ w.r.t.\ $\Der$, if
 \begin{equation}\label{eq:SectionAxiom}
  \Der\Int = \id,
 \end{equation}
 and we call $(\mathcal{R},\Der,\Int)$ an \emph{integro-differential ring}.
 On an integro-differential ring, we define the \emph{(induced) evaluation} $\E:\mathcal{R}\to\mathcal{C}$ by
 \begin{equation}
 \label{eq:EvaluationDef}
  \E := \id-\Int\Der.
 \end{equation}
\end{definition}
Note that the above definition does not require $\mathcal{R}$ or $\mathcal{C}$ to be commutative, so a $\mathcal{C}$-linear map is to be understood as a $\mathcal{C}$-bimodule homomorphism in general.
Properties, examples, and applications of integro-differential rings $(\mathcal{S},\Der,\Int)$ were studied in \cite{RaabRegensburger}.
For example, from Eq.~(18) in that paper, it follows that integration by parts
\begin{equation}\label{eq:IBP}
 \Int(\Der{f})g = fg-\E{fg}-\Int{f}\Der{g}
\end{equation}
holds for all $f,g\in\mathcal{S}$ and Theorem~3.1 in that paper shows that the product of two integrals satisfies the \emph{Rota-Baxter identity with evaluation}
\begin{equation}\label{eq:mRB}
 (\Int{f})\Int{g} = \Int{f}\Int{g} + \Int(\Int{f})g + \E(\Int{f})\Int{g}.
\end{equation}
In Example~2.13 of \cite{RaabRegensburger}, the following basic examples for an integro-differential ring with non-multiplicative evaluation \eqref{eq:EvaluationDef} are given.
\begin{example}\label{ex:IDR}
 Let $\mathcal{C}$ a commutative ring with $\mathbb{Q}\subseteq\mathcal{C}$, i.e.\ $\mathbb{Q}$ is a subring of $\mathcal{C}$, and let $\mathcal{S}=\mathcal{C}[x,x^{-1},\ln(x)]$ or $\mathcal{S}=\mathcal{C}((x))[\ln(x)]$.
 The usual derivation $\Der=\frac{d}{dx}$ is surjective on $\mathcal{S}$.
 An integration $\Int$ on $\mathcal{S}$ was given by a recursive formula in \cite{RaabRegensburger} so that $(\mathcal{S},\Der,\Int)$ is an integro-differential ring.
 The evaluation defined by \eqref{eq:EvaluationDef} is the $\mathcal{C}$-linear map $\E$ acting as
 \begin{equation}\label{eq:LaurentEval}
  \E x^k\ln(x)^n = \begin{cases}1&k=n=0\\0&\text{otherwise}\end{cases},
 \end{equation}
 which is obviously not multiplicative on $\mathcal{S}$.
 The integration $\Int$ can also be understood via the alternative characterization given in \cite[Thm.~2.4]{RaabRegensburger}.
 Namely, if the $\mathcal{C}$-linear map $\E$ is defined by \eqref{eq:LaurentEval} instead, then the same integration $\Int$ can be defined in terms of this $\E$ via $\Int{f}:=g-\E{g}$, where $g\in\mathcal{S}$ is arbitrary such that $\Der{g}=f$.
 In \eqref{eq:mRB}, the evaluation term cannot be omitted in general, as illustrated by choosing $f=1$ and $g=x^{-2}$ leading to $\E(\Int{f})\Int{g}=\E{x}(-x^{-1})=-1$.
\end{example}

The following lemma shows that integro-differential rings can be characterized entirely by equations that hold for all elements of the ring.
In other words, the class of integro-differential rings forms a variety.

\begin{lemma}\label{lem:IDRvariety}
 Let $(\mathcal{R},\Der)$ be a differential ring with constants $\mathcal{C}$.
 Let $\Int:\mathcal{R}\to\mathcal{R}$ be a map such that $\Der\Int=\id$ and define $\E:=\id-\Int\Der$.
 Then, $\Int$ is $\mathcal{C}$-linear if and only if it is additive and satisfies $(\E{g})\Int{f}=\Int(\E{g})f$ and $(\Int{f})\E{g}=\Int{f}\E{g}$ for all $f,g\in\mathcal{R}$.
\end{lemma}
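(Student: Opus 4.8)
The plan is to treat the two implications separately, isolating the single observation on which the whole equivalence turns. First I would record two facts that use only $\Der\Int=\id$ and the additivity of $\Int$, not yet any $\mathcal{C}$-linearity. Additivity forces $\Int0=0$ (from $\Int0=\Int(0+0)=\Int0+\Int0$), and applying $\Der$ to the definition $\E=\id-\Int\Der$ gives $\Der\E=\Der-\Der\Int\Der=\Der-\Der=0$, so $\E$ genuinely takes values in $\mathcal{C}=\ker(\Der)$. This justifies reading the two stated identities as relations between an honest constant $\E{g}\in\mathcal{C}$ and elements of $\mathcal{R}$: with the binding convention of the paper, $(\E{g})\Int{f}=\Int(\E{g})f$ says $\Int((\E{g})f)=(\E{g})\Int{f}$, and $(\Int{f})\E{g}=\Int{f}\E{g}$ says $\Int(f(\E{g}))=(\Int{f})\E{g}$, i.e. compatibility of $\Int$ with left and right multiplication by the particular constant $\E{g}$. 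The forward implication is then immediate: a $\mathcal{C}$-linear $\Int$ is additive, and since $\E{g}\in\mathcal{C}$ the two identities are just special cases of its left- and right-module compatibility with constants.

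For the converse, which is the substantial direction, the key step is to show that $\E$ restricts to the identity on $\mathcal{C}$. For $c\in\mathcal{C}$ we have $\Der{c}=0$, whence $\E{c}=c-\Int\Der{c}=c-\Int0=c$. Consequently $\im(\E)=\mathcal{C}$, so although the hypotheses are phrased only in terms of evaluations $\E{g}$, they in fact cover every constant: specializing $g=c\in\mathcal{C}$ and using $\E{c}=c$ turns the two identities into $\Int(cf)=c\,\Int f$ and $\Int(fc)=(\Int f)c$ for all $c\in\mathcal{C}$ and all $f\in\mathcal{R}$. Together with the assumed additivity, these are exactly the defining conditions for $\Int$ to be a $\mathcal{C}$-bimodule homomorphism, i.e.\ $\mathcal{C}$-linear, which completes the argument.

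The hard part is really just recognizing that observation $\E|_{\mathcal{C}}=\id_{\mathcal{C}}$, hence $\im(\E)=\mathcal{C}$: the hypotheses are deliberately stated through the derived map $\E$ rather than directly over $\mathcal{C}$, and everything hinges on closing that gap so that relations assumed only for values of $\E$ propagate to all of $\mathcal{C}$. The one remaining piece of care is bookkeeping in the possibly noncommutative setting, keeping the left relation $(\E{g})\Int{f}=\Int(\E{g})f$ and the right relation $(\Int{f})\E{g}=\Int{f}\E{g}$ separate so that both the left- and right-module structures are accounted for; no computation beyond the short identities above should be needed.
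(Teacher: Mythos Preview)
Your proof is correct and follows essentially the same approach as the paper: the forward direction uses $\Der\E=0$ to see that $\E g\in\mathcal{C}$, and the converse uses $\E g=g$ for $g\in\mathcal{C}$ to specialize the hypotheses. Your version simply spells out in more detail the steps the paper compresses into two sentences, including the additivity $\Rightarrow\Int0=0$ step and the bimodule bookkeeping for the noncommutative case.
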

\begin{proof}
 If $\Int$ is $\mathcal{C}$-linear, then the other properties immediately follow from $\Der\E=0$.
 Conversely, $\mathcal{C}$-linearity follows since $\E{g}=g$ holds for $g\in\mathcal{C}$.
\end{proof}

In a commutative integro-differential ring, products of nested integrals can be rewritten as linear combinations of nested integrals by iteratively applying \eqref{eq:mRB}.
For convenient notation of nested integrals and their products, it is standard to represent them in terms of pure tensors and their shuffle products.
For a module $M$ over a commutative ring $\mathcal{C}$, recall that the shuffle product $\shuffle$ on the $\mathcal{C}$-tensor algebra
\[
 T(M)=\bigoplus_{n=0}^{\infty}M^{\otimes n}
\]
can be recursively defined for pure tensors $t \in M^{\otimes n}$ and $s \in M^{\otimes m}$ by
\begin{equation}\label{eq:shufflerec}
 t\shuffle{s} := \begin{cases}
  t \otimes s & n=0 \vee m=0\\
  t_1 \otimes \left((t_2\otimes\dots\otimes{t_n}) \shuffle s\right) + s_1 \otimes \left(t \shuffle (s_2\otimes\dots\otimes{s_m})\right) & n,m\ge1
 \end{cases}
\end{equation}
and extends to arbitrary $t,s \in T(M)$ by biadditivity.
It satisfies $t \shuffle s = s \shuffle t$ and $c(t \shuffle s) = (ct) \shuffle s$ for $c\in\mathcal{C}$ and has the empty tensor $1_{T(M)}$ as unit element.
Moreover, $\shuffle$ is associative and satisfies the distributive law.
Hence, it turns the $\mathcal{C}$-module $T(M)$ into a commutative $\mathcal{C}$-algebra.
Moreover, like the tensor algebra, the shuffle algebra on $M$ is a graded algebra, since, for any $t \in M^{\otimes n}$ and $s \in M^{\otimes m}$, we have $t \shuffle s \in M^{\otimes(n+m)}$.

In the following, let $(\mathcal{S},\Der,\Int)$ be a commutative integro-differential ring with constants $\mathcal{C}$.
Nested integrals can be represented via the $\mathcal{C}$-linear map $\sigma: T(\mathcal{S})\to\mathcal{S}$ defined on pure tensors $f = f_1\otimes\dots\otimes{f_n} \in T(\mathcal{S})$ by
\[
 \sigma(f) := \Int{f_1}\Int{f_2}\dots\Int{f_n} \in \mathcal{S}.
\]
For such pure tensors, we denote shortened versions by 
\[
 f_i^j := f_i\otimes\dots\otimes{f_j} \in T(\mathcal{S})
\]
for any $i \in \{1,\dots,n+1\}$ and $j \in \{0,\dots,n\}$ with $i\le{j+1}$.
The empty tensor arises as $f_i^j=1_{T(\mathcal{S})}$, if $i=j+1$, and we have $\sigma(1_{T(\mathcal{S})})=1$.

To express products of nested integrals, we also define the map $e:T(\mathcal{S})\times{T(\mathcal{S})}\to\mathcal{C}$ by
\[
 e(f,g):=\E\sigma(f)\sigma(g)
\]
for shorter notation of the constants involved.
By $\mathcal{C}$-linearity of $\sigma$ and $\E$, it follows that $e$ is $\mathcal{C}$-linear in each argument.
Note that, the evaluation $\E\Int{f}$ of the integral of any $f \in \mathcal{S}$ is zero by definition.
Hence, $\E$ is multiplicative if and only if $e(f,g)=0$ holds whenever $f$ or $g$ lies in $\bigoplus_{n=1}^\infty\mathcal{S}^{\otimes n}$.
In \cite[Thm.~3.3]{RaabRegensburger}, it was shown that the product of nested integrals in $\mathcal{S}$ can be expressed via the shuffle product as
\begin{equation}\label{eq:IDRShuffle}
 \sigma(f)\sigma(g) = \sigma(f\shuffle{g})+\sum_{i=0}^{n-1}\sum_{j=0}^{m-1}e\!\left(f_{i+1}^n,g_{j+1}^m\right)\sigma(f_1^i\shuffle{g_1^j}),
\end{equation}
where $f,g \in T(\mathcal{S})$ are pure tensors of length $n$ and $m$, respectively.
For multiplicative evaluation $\E$, the evaluation terms vanish and \eqref{eq:IDRShuffle} yields the standard shuffle relations $\sigma(f)\sigma(g) = \sigma(f\shuffle{g})$ of nested integrals given in \cite{Ree}, see also \cite{GuoBook}.
In general, \eqref{eq:IDRShuffle} allows us to immediately deduce the following relations among constants that arise as evaluations of products of nested integrals.

\begin{corollary}\label{cor:IDRConstRel}
 Let $(\mathcal{S},\Der,\Int)$ be a commutative integro-differential ring with constants $\mathcal{C}$.
 Let $f,g,h \in T(\mathcal{S})$ be pure tensors of length $n$, $m$, and $l$, respectively.
 Then, we have
 \begin{multline}\label{eq:IDRConstRel}
  e(f\shuffle{g},h)-e(f,g\shuffle{h})\,+\\
  +\sum_{j=0}^{m-1}\Bigg(\sum_{i=0}^{n-1}e\!\left(f_{i+1}^n,g_{j+1}^m\right)e\!\left(f_1^i\shuffle{g_1^j},h\right)
  -\sum_{k=0}^{l-1}e\!\left(f,g_1^j\shuffle{h_1^k}\right)e\!\left(g_{j+1}^m,h_{k+1}^l\right)\!\Bigg)=0.
\end{multline}
\end{corollary}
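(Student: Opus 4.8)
The plan is to evaluate the triple product $\sigma(f)\sigma(g)\sigma(h)$ under $\E$ in two ways, grouping the three factors differently, and to equate the results. Since multiplication in $\mathcal{S}$ is associative, $\E\bigl[(\sigma(f)\sigma(g))\sigma(h)\bigr]=\E\bigl[\sigma(f)(\sigma(g)\sigma(h))\bigr]$, and the relation \eqref{eq:IDRShuffle} lets us rewrite the product of any two of the three nested integrals as a combination of single nested integrals.

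Before carrying this out, I would record that the definition $e(f,g)=\E\sigma(f)\sigma(g)$ on pure tensors extends, by the $\mathcal{C}$-linearity of $\sigma$ and $\E$ together with the bilinearity of $e$, to the identity $\E\sigma(a)\sigma(b)=e(a,b)$ for \emph{all} $a,b\in T(\mathcal{S})$. This is precisely what allows $\E$ to be applied to a product one of whose factors is a shuffle product such as $f\shuffle g$, which is a sum of pure tensors rather than a single one.

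Starting from $(\sigma(f)\sigma(g))\sigma(h)$, I would substitute \eqref{eq:IDRShuffle} for $\sigma(f)\sigma(g)$, multiply by $\sigma(h)$, and apply $\E$ via the extended identity, obtaining
\[
 e(f\shuffle g,h)+\sum_{i=0}^{n-1}\sum_{j=0}^{m-1}e\!\left(f_{i+1}^n,g_{j+1}^m\right)e\!\left(f_1^i\shuffle g_1^j,h\right).
\]
Symmetrically, starting from $\sigma(f)(\sigma(g)\sigma(h))$ and applying \eqref{eq:IDRShuffle} to $\sigma(g)\sigma(h)$ yields
\[
 e(f,g\shuffle h)+\sum_{j=0}^{m-1}\sum_{k=0}^{l-1}e\!\left(g_{j+1}^m,h_{k+1}^l\right)e\!\left(f,g_1^j\shuffle h_1^k\right).
\]
Equating these two expressions for the same constant, moving everything to one side, and collecting the two double sums under the common index $j$ reproduces \eqref{eq:IDRConstRel} exactly (using that factors commute in $\mathcal{C}$).

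The argument is essentially bookkeeping, and I expect no serious obstacle. The only points requiring care are the correct invocation of the extended identity $\E\sigma(a)\sigma(b)=e(a,b)$ for non-pure-tensor arguments, and the boundary cases of the index ranges, where shortened tensors such as $f_1^0$ or $g_1^0$ degenerate to the empty tensor with $\sigma(1_{T(\mathcal{S})})=1$; the stated conventions handle these automatically, so that the sums produced by \eqref{eq:IDRShuffle} land precisely in the grouped form displayed in \eqref{eq:IDRConstRel}.
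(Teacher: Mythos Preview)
Your proposal is correct and follows essentially the same approach as the paper's proof: compute $\E\sigma(f)\sigma(g)\sigma(h)$ in two ways by applying \eqref{eq:IDRShuffle} first to $\sigma(f)\sigma(g)$ and then to $\sigma(g)\sigma(h)$, and take the difference. Your explicit remark about extending $e$ by $\mathcal{C}$-linearity to non-pure tensors is exactly the justification the paper invokes when it writes ``by definition and $\mathcal{C}$-linearity of $e$''.
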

\begin{proof}
 We compute the constant $\E\sigma(f)\sigma(g)\sigma(h)$ in two ways.
 First, we use \eqref{eq:IDRShuffle} to obtain
 \begin{align*}
  \E\big(\sigma(f)\sigma(g)\big)\sigma(h) &= \E\!\left(\sigma(f\shuffle{g})+\sum_{i=0}^{n-1}\sum_{j=0}^{m-1}e\!\left(f_{i+1}^n,g_{j+1}^m\right)\sigma(f_1^i\shuffle{g_1^j})\right)\!\sigma(h)\\
  &= e(f\shuffle{g},h)+\sum_{i=0}^{n-1}\sum_{j=0}^{m-1}e\!\left(f_{i+1}^n,g_{j+1}^m\right)e\!\left(f_1^i\shuffle{g_1^j},h\right)
 \end{align*}
 by definition and $\mathcal{C}$-linearity of $e$. 
 Analogously, if we instantiate \eqref{eq:IDRShuffle} to express $\sigma(g)\sigma(h)$, we obtain
 \[
  \E\sigma(f)\big(\sigma(g)\sigma(h)\big) = e(f,g\shuffle{h}) + \sum_{j=0}^{m-1}\sum_{k=0}^{l-1}e\!\left(f,g_1^j\shuffle{h_1^k}\right)e\!\left(g_{j+1}^m,h_{k+1}^l\right).
 \]
 Altogether, taking the difference of these two expressions of $\E\sigma(f)\sigma(g)\sigma(h)$ yields \eqref{eq:IDRConstRel}.
\end{proof}

\section{The free commutative integro-differential ring on a commutative differential ring}
\label{sec:free}

To construct the free commutative integro-differential ring on a commutative differential ring $(\mathcal{R},\Der)$, we assume that, in $\mathcal{R}$, the ring of constants $\mathcal{C}:=\const_\Der(\mathcal{R})$ as well as the {\lq\lq}integrable{\rq\rq} elements $\Der\mathcal{R}$ are complemented $\mathcal{C}$-submodules of $\mathcal{R}$.
In the language of \cite[Def.~4.4]{GuoRegensburgerRosenkranz}, this is equivalent to $(\mathcal{R},\Der)$ being a regular differential $\const_\Der(\mathcal{R})$-algebra.
In this case, a $\mathcal{C}$-linear reflexive generalized inverse of the derivation $\Der$ exists.
In particular, if $\mathcal{C}$ is a semisimple ring (e.g.\ a field), then such a generalized inverse always exists.
This is because modules over semisimple rings are semisimple, i.e.\ every submodule has a direct complement, see e.g.\ Theorems~4.3.4 and 5.2.7 in \cite{CohnBasic}.
\begin{definition}\label{def:quasiintegration}
 Let $(\mathcal{R},\Der)$ be a differential ring with ring of constants $\mathcal{C}$.
 If $\Q:\mathcal{R}\to\mathcal{R}$ is a $\mathcal{C}$-module endomorphism with
 \[
  \Der\Q\Der=\Der \quad\text{and}\quad \Q\Der\Q=\Q,
 \]
 then we call it a \emph{quasi-integration} on $(\mathcal{R},\Der)$.
\end{definition}
In fact, every quasi-integration $\Q$ gives rise to two direct decompositions of $\mathcal{R}$ into $\mathcal{C}$-modules, namely $\mathcal{R}=\mathcal{C}\oplus\mathcal{R}_\mathrm{J}$ with $\mathcal{R}_\mathrm{J}:=\Q\mathcal{R}$ and
\[
 \mathcal{R}=(\Der\mathcal{R})\oplus\mathcal{R}_\T\quad\text{with}\quad \mathcal{R}_\T:=\ker(\Q)
\]
so that $\mathcal{R}_\T$ is a module of {\lq\lq}non-integrable{\rq\rq} elements, following the notation in \cite{GuoRegensburgerRosenkranz}.
Conversely, every pair of direct complements in $\mathcal{R}$ of the $\mathcal{C}$-modules $\mathcal{C}$ and $\Der\mathcal{R}$ determines a unique quasi-integration with these complements as its image and kernel.

\begin{remark}\label{rem:freeonset}
 Free algebras (in the sense of universal algebra) on a set always exist in varieties, see e.g.\ \cite[Prop.~1.3.6]{CohnFurther}.
 In particular, the free integro-differential ring on a set exists, as does the free differential ring.
 To apply our construction of the free commutative integro-differential ring laid out below, we first need a commutative differential ring $(\mathcal{R},\Der)$ where both constants $\mathcal{C}$ and derivatives $\Der\mathcal{R}$ form complemented $\mathcal{C}$-submodules.
 The free commutative differential ring $(\mathcal{R},\Der)=(\mathbb{Z}\{X\},\Der)$ on a set $X$ has constants $\mathcal{C}=\mathbb{Z}$ and is given by the differential polynomials in $X$ with coefficients in $\mathbb{Z}$.
 However, $\Der\mathcal{R}$ is not a complemented $\mathbb{Z}$-module in $\mathcal{R}=\mathbb{Z}\{X\}$, since for $x \in X$ and $g,r\in\mathcal{R}$ with $x\Der{x}=\Der{g}+r$ it follows that $r\not\in\Der\mathcal{R}$ and $2r=\Der(x^2-2g)$.
 Instead, we can apply our construction to $(\mathcal{R},\Der)=(\mathbb{Q}\{X\},\Der)$, which has constants $\mathcal{C}=\mathbb{Q}$.
 In fact, for any commutative $\mathbb{Q}$-algebra $\mathcal{K}$, $\mathcal{K}\{X\}=\mathcal{K}\otimes_\mathbb{Q}\mathbb{Q}\{X\}$ inherits the complement of $\im(\Der)$ from $\mathbb{Q}\{X\}$, which allows to use our construction for obtaining the free commutative integro-differential $\mathcal{K}$-algebra $\mathrm{IDR}(\mathcal{K}\{X\})$ on a set $X$ over such $\mathcal{K}$.
\par
 We note that the survey \cite{GaoGuoSurvey} states an explicit complement of $\im(\Der)$ in $\mathcal{K}\{X\}$ omitting the necessary requirement that $\mathcal{K}$ is a $\mathbb{Q}$-algebra.
 Similarly, this condition would be needed in \cite{GaoGuoZheng,GaoGuoRosenkranz} for constructing the free (non)commutative integro-differential $\mathcal{K}$-algebra (with multiplicative evaluation) on a set $X$ via Gr\"{o}bner--Shirshov bases, since generators of the ideal need to be made monic.
\end{remark}

For the rest of this section, we fix a commutative differential ring $(\mathcal{R},\Der)$ with constants $\mathcal{C}$ and a quasi-integration $\Q$ on $(\mathcal{R},\Der)$.
First, in Section~\ref{sec:AlgebraStructure}, we construct the free commutative integro-differential ring $\mathrm{IDR}(\mathcal{R})$ as commutative $\mathcal{C}$-algebra only, introducing building blocks that are relevant for all that follows.
Then, in Section~\ref{sec:DerInt}, we define a derivation and integration and show that this algebra is indeed an integro-differential ring.
In Section~\ref{sec:universal}, we show that it is freely generated by $(\mathcal{R},\Der)$.
Hence, using different quasi-integrations in the construction will yield isomorphic integro-differential rings for the same $(\mathcal{R},\Der)$.
After that, we look at simple examples in Section~\ref{sec:examples} where $\mathcal{R}_\T$ is cyclic, like in the case of Laurent series.
We investigate the close relation to the shuffle algebra in Section~\ref{sec:shufflealgebra}.
In particular, we show that the basis of the shuffle algebra constructed in \cite{Radford} can be carried over to yield a basis of $\mathrm{IDR}(\mathcal{R})$.
Finally, we investigate the structure of the relations satisfied by new constants within $\mathrm{IDR}(\mathcal{R})$ in Section~\ref{sec:constants}, whereby we identify a smaller generating set for these constants if $\mathbb{Q}\subseteq\mathcal{C}$.
If $\mathcal{R}_\T$ is a free module, we conjecture that these generators are in fact algebraically independent over $\mathcal{C}$, which we verify for certain subsets of these generators.

\subsection{Construction as $\mathcal{C}$-algebra}
\label{sec:AlgebraStructure}

Definition~\ref{def:DefIDR} below constructs the set $\mathrm{IDR}(\mathcal{R})$ and its $\mathcal{C}$-algebra structure via the $\mathcal{C}$-tensor product of $\mathcal{R}$ with other $\mathcal{C}$-algebras.
These new $\mathcal{C}$-algebras are constructed from ingredients $\mathcal{C},\mathcal{R}_\mathrm{J},\mathcal{R}_\T\subseteq\mathcal{R}$.
Not only the notation introduced below, but also the meaning of the new algebras that will be implied by the derivation and integration introduced later in Section~\ref{sec:DerInt}, will be relevant for the rest of the paper.
So, we briefly explain the intuitive role now.
Pure tensors in $T(\mathcal{R}_\T)$ represent new nested integrals in $\mathrm{IDR}(\mathcal{R})$.
New constants arise from evaluation of products of elements of $\mathcal{R}_\mathrm{J}$ with such nested integrals, which we model in the $\mathcal{C}$-algebra $\mathcal{C}_1$ defined in \eqref{eq:DefC1}.
Similarly, in \eqref{eq:DefC2}, we define the $\mathcal{C}$-algebra $\mathcal{C}_2$ to model new constants arising from evaluation of products of nested integrals in $\mathrm{IDR}(\mathcal{R})$.
Following Corollary~\ref{cor:IDRConstRel}, we impose relations \eqref{eq:ConstRel} in $\mathcal{C}_2$.

Define the $\mathcal{C}$-module
\begin{equation}\label{eq:DefTensors}
 \mathcal{T} := T(\mathcal{R}_\T) = \bigoplus_{n=0}^\infty \mathcal{R}_\T^{\otimes{n}}
\end{equation}
and endow it with two different products: the shuffle product $\shuffle$ as well as the tensor product $\otimes$ over $\mathcal{C}$.
Either of these products turns $\mathcal{T}$ into a $\mathcal{C}$-algebra with the empty tensor being the unit element $\et$.
Denote
\begin{equation}\label{eq:DefTtilde}
 \widetilde{\mathcal{T}} := \bigoplus_{n=1}^\infty \mathcal{R}_\T^{\otimes{n}}
\end{equation}
the subalgebra without empty tensors (and hence does not typically include the unit element $\et$).
Recall that, for a pure tensor $f = f_1\otimes\dots\otimes{f_n} \in \mathcal{T}$, we abbreviate
\[
 f_i^j := f_i\otimes\dots\otimes{f_j} \in \mathcal{T}
\]
for any $i \in \{1,\dots,n+1\}$ and $j \in \{0,\dots,n\}$ with $i\le{j+1}$, i.e.\ $f_i^j=\et$ for $i=j+1$.
Also define the $\mathcal{C}$-modules
\begin{align}
 M_1 &:= \mathcal{R}_\mathrm{J}\otimes\mathcal{T},\label{eq:DefM1}\\
 M_2 &:= \widetilde{\mathcal{T}}^{\odot2},\label{eq:DefM2}
\end{align}
where $\widetilde{\mathcal{T}}^{\odot2}$ denotes the symmetric product of $\widetilde{\mathcal{T}}$ with itself over $\mathcal{C}$ and can be obtained as the quotient $\widetilde{\mathcal{T}}^{\otimes2}/\linspan_\mathcal{C}\{f\otimes{g}-g\otimes{f} \mid f,g\in \widetilde{\mathcal{T}}\}$.
Note that $\mathcal{R}_\mathrm{J}$ and $\mathcal{R}_\T$ may have a non-trivial intersection.
Hence, for non-zero $f \in \mathcal{R}_\mathrm{J}\cap\mathcal{R}_\T$ and $g \in \widetilde{\mathcal{T}}$, it is important to distinguish $f\otimes{g} \in M_1$ from $f\odot{g} \in M_2$.

Let $\ep(M_1)$ and $\ep(M_2)$ be isomorphic copies of the $\mathcal{C}$-modules $M_1$ and $M_2$, respectively, with isomorphisms $\ep:M_i\to\ep(M_i)$.
Now, we define
\begin{equation}\label{eq:DefC1}
 \mathcal{C}_1:=\Sym(\ep(M_1))
\end{equation}
as the symmetric $\mathcal{C}$-algebra with unit generated by $\ep(M_1)$ and define
\begin{equation}\label{eq:DefC2}
 \mathcal{C}_2:=\Sym(\ep(M_2))/J_2
\end{equation}
as the quotient of the symmetric $\mathcal{C}$-algebra with unit on $\ep(M_2)$ by the ideal $J_2 \subseteq \Sym(\ep(M_2))$ generated by the following relations among elements of $\ep(M_2)$:
\begin{multline}\label{eq:ConstRel}
 \ep((f\shuffle{g})\odot{h})-\ep(f\odot(g\shuffle{h}))\,+\\
 +\sum_{j=0}^{m-1}\Bigg(\sum_{i=\max(0,1-j)}^{n-1}\ep\!\left(f_{i+1}^n\odot{g_{j+1}^m}\right)\!{\cdot}\ep\!\left((f_1^i\shuffle{g_1^j})\odot{h}\right)-\\
 -\sum_{k=\max(0,1-j)}^{l-1}\ep\!\left(f\odot(g_1^j\shuffle{h_1^k})\right)\!{\cdot}\ep\!\left(g_{j+1}^m\odot{h_{k+1}^l}\right)\!\Bigg)
\end{multline}
for all $n,m,l \in \{1,2,\dots\}$ and all pure tensors $f \in \mathcal{R}_\T^{\otimes{n}}$, $g \in \mathcal{R}_\T^{\otimes{m}}$, and $h \in \mathcal{R}_\T^{\otimes{l}}$.

Next, we equip $\mathcal{C}_2\otimes\mathcal{T}$ with the following $\mathcal{C}$-bilinear map.
\begin{definition}\label{def:GeneralizedShuffle}
 With $\mathcal{C}_2$ and $\mathcal{T}$ defined as above, we set
\begin{equation}\label{eq:GeneralizedShuffle}
 (c\otimes{f})\cdot(d\otimes{g}) := (c{\cdot}d)\otimes(f\shuffle{g})+\sum_{i=0}^{n-1}\sum_{j=0}^{m-1}\big(c{\cdot}d{\cdot}\ep(f_{i+1}^n\odot{g_{j+1}^m})\big)\otimes(f_1^i\shuffle{g_1^j})
\end{equation}
for all $c,d \in \mathcal{C}_2$, $n,m \in \{0,1,2,\dots\}$, and all pure tensors $f \in \mathcal{R}_\T^{\otimes{n}}$ and $g \in \mathcal{R}_\T^{\otimes{m}}$, and extend this to multiplication of arbitrary elements of $\mathcal{C}_2\otimes\mathcal{T}$ by $\mathcal{C}$-bilinearity.
\end{definition}

We will prove below in Section~\ref{sec:multiplication} that $\mathcal{C}_2\otimes\mathcal{T}$ with this bilinear map is indeed a commutative $\mathcal{C}$-algebra.
The close relation of this algebra to the shuffle algebra will be detailed later in Section~\ref{sec:shufflealgebra}.
Note that the definition of the algebra $\mathcal{C}_2\otimes\mathcal{T}$ only depends on the $\mathcal{C}$-module $\mathcal{R}_\T$.
Finally, we can define the free commutative integro-differential ring as commutative $\mathcal{C}$-algebra as follows.

\begin{definition}\label{def:DefIDR}
 We define the commutative $\mathcal{C}$-algebra
 \begin{equation}\label{eq:DefIDR}
  \mathrm{IDR}(\mathcal{R}):=\mathcal{R}\otimes\mathcal{C}_1\otimes\mathcal{C}_2\otimes\mathcal{T}
 \end{equation}
 as the $\mathcal{C}$-tensor product of the commutative $\mathcal{C}$-algebras $\mathcal{R}$, $\mathcal{C}_1$, and $\mathcal{C}_2\otimes\mathcal{T}$ defined above.
\end{definition}
Once more, for non-zero $f \in \mathcal{R}_\mathrm{J}\cap\mathcal{R}_\T$ and $g \in \widetilde{\mathcal{T}}$, it is important to distinguish $\ep(f\otimes{g}) \in \mathcal{C}_1$, $\ep(f\odot{g}) \in \mathcal{C}_2$, and $f\otimes{g} \in \mathcal{T}$ in three different tensor factors of $\mathrm{IDR}(\mathcal{R})$.
We also denote the canonical $\mathcal{C}$-algebra homomorphism $\mathcal{R}\to\mathrm{IDR}(\mathcal{R})$ by
\begin{equation}\label{eq:DefIota}
 \iota(f)=f\otimes1\otimes1\otimes\et.
\end{equation}

\subsubsection{Multiplication on $\mathcal{C}_2\otimes\mathcal{T}$}
\label{sec:multiplication}
The $\mathcal{C}$-bilinear map defined by the explicit formula \eqref{eq:GeneralizedShuffle} also satisfies a recursive relation w.r.t.\ $n+m$ based on \eqref{eq:shufflerec}.
To express it in a concise way, let us define a second multiplication on $\mathcal{C}_2\otimes\mathcal{T}$ as the tensor product of the multiplication on $\mathcal{C}_2$ with the multiplication $\otimes$ on $\mathcal{T}$, i.e.
\begin{equation}\label{eq:tensormultiplication}
 (c\otimes{f})\circledast(d\otimes{g}):=(c{\cdot}d)\otimes(f\otimes{g}).
\end{equation}
Note that $\circledast$ is not commutative, in contrast to the bilinear map defined in Definition~\ref{def:GeneralizedShuffle}.
\begin{lemma}\label{lem:recursivemult}
 If \eqref{eq:GeneralizedShuffle} holds for all $c,d \in \mathcal{C}_2$, $n,m \in \{0,1,2,\dots\}$, and all pure tensors $f\in\mathcal{R}_\T^{\otimes{n}}$ and $g\in\mathcal{R}_\T^{\otimes{m}}$, then we also have the recursion
 \begin{equation}\label{eq:GeneralizedShuffleRec2}
  (c\otimes{f})\cdot(d\otimes{g}) =
  \begin{cases}(c\otimes{f})\circledast(d\otimes{g})&n=0\vee{m=0}\\[\smallskipamount]
  \big((c{\cdot}d)\otimes{f_1}\big)\circledast\big((1\otimes{f_2^n})\cdot(1\otimes{g})\big)\,+\\\quad+\,\big((c{\cdot}d)\otimes{g_1}\big)\circledast\big((1\otimes{f})\cdot(1\otimes{g_2^m})\big)\,+&n>0\wedge{m>0}\\\quad+\,(c{\cdot}d{\cdot}\ep(f\odot{g}))\otimes\et
  \end{cases}.
 \end{equation}
\end{lemma}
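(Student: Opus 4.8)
The plan is to verify the recursion by direct expansion, treating \eqref{eq:GeneralizedShuffle} as a formula valid for \emph{all} pure tensors and substituting it into both sides of \eqref{eq:GeneralizedShuffleRec2}. No induction is required: since the explicit formula already determines every product occurring on the right-hand side, the claim reduces to a finite comparison of terms. I would first dispose of the degenerate branch $n=0\vee m=0$. If $f=\et$ (the case $g=\et$ being symmetric), the double sum in \eqref{eq:GeneralizedShuffle} is empty and $\et\shuffle g = g = \et\otimes g$, so \eqref{eq:GeneralizedShuffle} collapses to $(c{\cdot}d)\otimes(f\otimes g)$, which is exactly $(c\otimes f)\circledast(d\otimes g)$ by \eqref{eq:tensormultiplication}.

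For $n,m\ge1$ I would expand the right-hand side of \eqref{eq:GeneralizedShuffleRec2}. Applying \eqref{eq:GeneralizedShuffle} to the inner products $(1\otimes f_2^n)\cdot(1\otimes g)$ and $(1\otimes f)\cdot(1\otimes g_2^m)$ and then composing with $\circledast$ via \eqref{eq:tensormultiplication} yields, after shifting the summation indices so that they refer to entries of $f$ and $g$ rather than of $f_2^n$ and $g_2^m$: a leading term $(c{\cdot}d)\otimes\big(f_1\otimes(f_2^n\shuffle g)+g_1\otimes(f\shuffle g_2^m)\big)$; two double sums, each with coefficient $c{\cdot}d{\cdot}\ep(f_{i+1}^n\odot g_{j+1}^m)$ and tensor parts $f_1\otimes(f_2^i\shuffle g_1^j)$ (over $1\le i\le n-1$, $0\le j\le m-1$) and $g_1\otimes(f_1^i\shuffle g_2^j)$ (over $0\le i\le n-1$, $1\le j\le m-1$); and the isolated summand $(c{\cdot}d{\cdot}\ep(f\odot g))\otimes\et$.

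On the other side I would evaluate the left-hand side directly by \eqref{eq:GeneralizedShuffle}. Its leading term $(c{\cdot}d)\otimes(f\shuffle g)$ splits, via the shuffle recursion \eqref{eq:shufflerec} for $n,m\ge1$, into precisely the leading term found above. It then remains to match the double sum $\sum_{i=0}^{n-1}\sum_{j=0}^{m-1}(c{\cdot}d{\cdot}\ep(f_{i+1}^n\odot g_{j+1}^m))\otimes(f_1^i\shuffle g_1^j)$ against the remaining three pieces by partitioning its index range: the summand $(i,j)=(0,0)$, where $f_1^0\shuffle g_1^0=\et$, reproduces the isolated $\ep(f\odot g)\otimes\et$ term; the summands with $i\ge1$, $j=0$, where $f_1^i\shuffle\et=f_1\otimes f_2^i$, give the $j=0$ part of the first double sum; the summands with $i=0$, $j\ge1$ give the $i=0$ part of the second double sum; and for $i,j\ge1$ a further application of \eqref{eq:shufflerec} to $f_1^i\shuffle g_1^j=f_1\otimes(f_2^i\shuffle g_1^j)+g_1\otimes(f_1^i\shuffle g_2^j)$ distributes each summand exactly between the two double sums.

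The one step requiring genuine care is this bookkeeping of index ranges and boundary terms: the empty-tensor conventions $\et\shuffle x=x$ and $f_1^i=f_1\otimes f_2^i$ must be applied consistently so that the regions $i=j=0$, $(i\ge1,j=0)$, $(i=0,j\ge1)$, and $i,j\ge1$ of the left-hand double sum partition it without overlap or omission and align with the corresponding pieces on the right. Once the reindexing is carried out, all terms coincide and \eqref{eq:GeneralizedShuffleRec2} follows.
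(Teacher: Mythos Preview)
Your proposal is correct and follows essentially the same approach as the paper's proof: both verify the degenerate case directly, then for $n,m\ge1$ expand the inner products on the right-hand side of \eqref{eq:GeneralizedShuffleRec2} via \eqref{eq:GeneralizedShuffle} and match against the left-hand side using the shuffle recursion $f_1^i\shuffle g_1^j = f_1\otimes(f_2^i\shuffle g_1^j)+g_1\otimes(f_1^i\shuffle g_2^j)$ together with the boundary identifications for $i=0$ or $j=0$. The paper phrases the final step as collecting terms on the right to recover \eqref{eq:GeneralizedShuffle}, while you phrase it as partitioning the double sum on the left; this is the same computation read in opposite directions. One cosmetic point: for $n=0$ a pure tensor in $\mathcal{R}_\T^{\otimes0}=\mathcal{C}$ has the form $e\et$ rather than just $\et$, but the scalar $e$ can be absorbed into $c$ by $\mathcal{C}$-bilinearity, so your reduction to $f=\et$ is fine.
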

\begin{proof}
 If $n=0$, then $f=e\et$ for some $e \in \mathcal{C}$ and we have \eqref{eq:GeneralizedShuffleRec2} by $(c\otimes{f})\circledast(d\otimes{g})=(c{\cdot}d{\cdot}e)\otimes{g}$ and $(c\otimes{f})\cdot(d\otimes{g})=(c{\cdot}d)\otimes(f\shuffle{g})=(c{\cdot}d{\cdot}e)\otimes{g}$.
 If $m=0$, then \eqref{eq:GeneralizedShuffleRec2} follows analogously.
 For $n>0\wedge{m>0}$, we evaluate the right hand side of \eqref{eq:GeneralizedShuffleRec2} by first using \eqref{eq:GeneralizedShuffle} for the two products $(1\otimes{f_2^n})\cdot(1\otimes{g})$ and $(1\otimes{f})\cdot(1\otimes{g_2^m})$ to obtain
 \begin{multline*}
  (c{\cdot}d)\otimes(f_1\otimes(f_2^n\shuffle{g})) + \sum_{i=1}^{n-1}\sum_{j=0}^{m-1}(c{\cdot}d{\cdot}\ep(f_{i+1}^n\odot{g_{j+1}^m}))\otimes(f_1\otimes(f_2^i\shuffle{g_1^j}))\,+\\
  +(c{\cdot}d)\otimes(g_1\otimes(f\shuffle{g_2^m})) + \sum_{i=0}^{n-1}\sum_{j=1}^{m-1}(c{\cdot}d{\cdot}\ep(f_{i+1}^n\odot{g_{j+1}^m}))\otimes(g_1\otimes(f_1^i\shuffle{g_2^j}))+(c{\cdot}d{\cdot}\ep(f\odot{g}))\otimes\et.
 \end{multline*}
 Then, by exploiting $f_1\otimes(f_2^i\shuffle{g_1^j})+g_1\otimes(f_1^i\shuffle{g_2^j})=f_1^i\shuffle{g_1^j}$ from \eqref{eq:shufflerec} for every $i\in\{1,\dots,n\}$ and $j\in\{1,\dots,m\}$, we can collect terms yielding
 \begin{multline*}
  (c{\cdot}d)\otimes(f\shuffle{g}) + \sum_{i=1}^{n-1}\sum_{j=1}^{m-1}(c{\cdot}d{\cdot}\ep(f_{i+1}^n\odot{g_{j+1}^m}))\otimes(f_1^i\shuffle{g_1^j}))\,+\\
  +\sum_{i=1}^{n-1}(c{\cdot}d{\cdot}\ep(f_{i+1}^n\odot{g}))\otimes{f_1^i}+\sum_{j=1}^{m-1}(c{\cdot}d{\cdot}\ep(f\odot{g_{j+1}^m}))\otimes{g_1^j}+(c{\cdot}d{\cdot}\ep(f\odot{g}))\otimes\et,
 \end{multline*}
 which agrees with the right hand side of \eqref{eq:GeneralizedShuffle}.
 Therefore, \eqref{eq:GeneralizedShuffleRec2} holds also in this case.
\end{proof}

For showing associativity of the multiplication, we combine \eqref{eq:GeneralizedShuffle} and \eqref{eq:GeneralizedShuffleRec2}.

\begin{theorem}\label{thm:Calgebra}
 The $\mathcal{C}$-bilinear map defined in Definition~\ref{def:GeneralizedShuffle} turns the $\mathcal{C}$-module $\mathcal{C}_2\otimes\mathcal{T}$ into a commutative $\mathcal{C}$-algebra with unit element $1\otimes\et$.
\end{theorem}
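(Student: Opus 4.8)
The plan is to verify the three algebra axioms --- existence of the unit $1\otimes\et$, commutativity, and associativity --- for the $\mathcal{C}$-bilinear map of Definition~\ref{def:GeneralizedShuffle}. First I would observe that \eqref{eq:GeneralizedShuffle} is $\mathcal{C}_2$-bilinear in the constant factors: every summand on the right-hand side carries the coefficient $c{\cdot}d$ times an element of $\mathcal{C}_2$ that depends only on $f$ and $g$. Hence $\cdot$ makes $\mathcal{C}_2\otimes\mathcal{T}$ into a $\mathcal{C}_2$-algebra, and it suffices to check each axiom on the $\mathcal{C}_2$-module generators $1\otimes f$, for which I write $f\ast g:=(1\otimes f)\cdot(1\otimes g)$. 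The unit law is immediate from the case $n=0$ of \eqref{eq:GeneralizedShuffle}, where the double sum is empty and $\et\shuffle g=g$, giving $(1\otimes\et)\cdot(d\otimes g)=d\otimes g$. Commutativity is equally direct: swapping $f$ and $g$ in \eqref{eq:GeneralizedShuffle} and relabelling $i\leftrightarrow j$ leaves the expression invariant, because $\shuffle$ is commutative, the symmetric product satisfies $f_{i+1}^n\odot g_{j+1}^m=g_{j+1}^m\odot f_{i+1}^n$, and the multiplication of $\mathcal{C}_2$ is commutative.

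The real content is associativity, which I would prove by induction on the total length $p+q+r$ of three pure tensors $e,f,g\in\mathcal{T}$, establishing $(e\ast f)\ast g=e\ast(f\ast g)$ in $\mathcal{C}_2\otimes\mathcal{T}$ (where $\ast$ is extended to all of $\mathcal{C}_2\otimes\mathcal{T}$ through $\cdot$ by the $\mathcal{C}_2$-bilinearity just noted). If any of $p,q,r$ is zero, the corresponding factor is a scalar multiple of the unit $1\otimes\et$ and the identity reduces to a shorter instance, so these are the base cases. For $p,q,r\ge 1$ I would expand both triple products by repeatedly applying the recursion \eqref{eq:GeneralizedShuffleRec2}, which rewrites every $\cdot$-product through the auxiliary multiplication $\circledast$ and through $\cdot$-products of strictly shorter tensors. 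Since $\circledast$ is associative and $\mathcal{C}$-bilinear, being the tensor product of the associative multiplications on $\mathcal{C}_2$ and on $(\mathcal{T},\otimes)$, the induction hypothesis applies to all shorter products, and the recombination of the resulting head terms is governed by \eqref{eq:shufflerec} exactly as in the proof of Lemma~\ref{lem:recursivemult}. In this way every contribution carrying a nonempty tail matches term by term on the two sides.

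The one place where matching is not purely formal is the coefficient of the empty tensor $\et$, i.e.\ the purely constant part lying in $\mathcal{C}_2$. Collecting these contributions on each side (reading them off from \eqref{eq:GeneralizedShuffle} directly) reproduces exactly the two expansions of $\E\sigma(e)\sigma(f)\sigma(g)$ from the proof of Corollary~\ref{cor:IDRConstRel}, now transcribed into $\mathcal{C}_2$ with $\ep$ in place of $e(\cdot,\cdot)$; their difference is precisely a generator \eqref{eq:ConstRel} of the ideal $J_2$ and therefore vanishes in $\mathcal{C}_2=\Sym(\ep(M_2))/J_2$. I expect this constant-term reconciliation to be the main obstacle, for two reasons: one must track carefully which intermediate factors such as $f_1^i\shuffle g_1^j$ degenerate to $\et$ --- this is exactly why the lower summation limits in \eqref{eq:ConstRel} read $\max(0,1-j)$ rather than $0$, reflecting that $\ep$ is defined only on nonempty tensors whereas the analogous terms $e(\et,\cdot)$ in \eqref{eq:IDRConstRel} vanish automatically --- and one must confirm that no other residual constant survives. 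Once \eqref{eq:ConstRel} is recognized as the sole obstruction and seen to lie in $J_2$, associativity follows and the theorem is proved.
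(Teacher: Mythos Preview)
Your proposal is correct and follows essentially the same approach as the paper: reduce to generators via $\mathcal{C}_2$-bilinearity, dispatch unit and commutativity directly, and prove associativity by induction on the total tensor length using the recursion \eqref{eq:GeneralizedShuffleRec2}, with the $\et$-coefficient discrepancy being exactly the generator \eqref{eq:ConstRel} of $J_2$. The paper's inductive step is organized slightly differently---it first expands the inner product via \eqref{eq:GeneralizedShuffle}, then applies \eqref{eq:GeneralizedShuffleRec2} to the outer product after splitting shuffles by their first letter via \eqref{eq:shufflerec}, and finally collapses terms back using \eqref{eq:GeneralizedShuffle} in reverse---but this is the same mechanism you describe, and your identification of the constant-term bookkeeping (including the $\max(0,1-j)$ limits) as the crux is spot on.
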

\begin{proof}
 First, we show that the map $\cdot$ is well-defined on all of $(\mathcal{C}_2\otimes\mathcal{T})^2$.
 If $t,\tilde{t} \in \mathcal{C}_2\otimes\mathcal{R}_\T^{\otimes{n}}$ and $s,\tilde{s} \in \mathcal{C}_2\otimes\mathcal{R}_\T^{\otimes{m}}$ are pure tensors such that $t+\tilde{t}$ and $s+\tilde{s}$ are pure tensors as well, then for all $\lambda\in\mathcal{C}$ we have $(\lambda{t}+\tilde{t})\cdot{s}=\lambda(t\cdot{s})+\tilde{t}\cdot{s}$ and $t\cdot(\lambda{s}+\tilde{s})=\lambda(t\cdot{s})+t\cdot\tilde{s}$ by \eqref{eq:GeneralizedShuffle}, relying on the properties of $\odot$ and $\shuffle$.
 Therefore, the product of arbitrary elements of $\mathcal{C}_2\otimes\mathcal{R}_\T^{\otimes{n}}$ and $\mathcal{C}_2\otimes\mathcal{R}_\T^{\otimes{n}}$ is well-defined.
 By the definition of $\mathcal{T}$ as direct sum, it immediately follows that $\cdot$ is well-defined on all of $(\mathcal{C}_2\otimes\mathcal{T})^2$.
\par
 The $\mathcal{C}$-bilinear construction of $\cdot$ means in particular that the distributivity laws in $\mathcal{C}_2\otimes\mathcal{T}$ hold.
 From commutativity of $\odot$, $\shuffle$, and the multiplication in $\mathcal{C}_2$, it immediately follows by \eqref{eq:GeneralizedShuffle} that the induced product $\cdot$ is commutative too.
 By \eqref{eq:GeneralizedShuffle}, we directly obtain $(c\otimes{f})\cdot(1\otimes\et) = (c{\cdot}1)\otimes(f\shuffle\et) = c\otimes{f}$ for all $c\in\mathcal{C}_2$, $n\in\mathbb{N}$, and all pure tensors $f\in\mathcal{R}_\T^{\otimes{n}}$, which shows by distributivity that $1\otimes\et$ is the neutral element.
\par
 It remains to show associativity of the product.
 By distributivity, it suffices to show that for all $c,d,e \in \mathcal{C}_2$, $n,m,l\in\mathbb{N}$, and all pure tensors $f\in\mathcal{R}_\T^{\otimes{n}}$, $g\in\mathcal{R}_\T^{\otimes{m}}$, and $h\in\mathcal{R}_\T^{\otimes{l}}$ the associator $((c\otimes{f})\cdot(d\otimes{g}))\cdot(e\otimes{h})-(c\otimes{f})\cdot((d\otimes{g})\cdot(e\otimes{h}))$ is zero.
 We first show this for the special case $n=0$, where we assume $f=\et$ without loss of generality.
 By the definition, we easily verify $((c\otimes\et)\cdot(d\otimes{g}))\cdot(e\otimes{h}) = ((c{\cdot}d)\otimes{g})\cdot(e\otimes{h}) = (c\otimes\et)\cdot((d\otimes{g})\cdot(e\otimes{h}))$.
 By commutativity, this also implies the cases when $m=0$ or $l=0$.
 For the remaining cases, we proceed by induction on the sum $n+m+l$.
 The cases with $nml=0$ proven above in particular cover all cases with $n+m+l\le2$.
 Now we assume $n+m+l\ge3$ with $n,m,l\ge1$.
 For slightly shorter notation, we set $c=d=e=1$ first.
 To evaluate the left factor in $((1\otimes{f})\cdot(1\otimes{g}))\cdot(1\otimes{h})$, we use the definition \eqref{eq:GeneralizedShuffle}.
 In order to apply the recursion \eqref{eq:GeneralizedShuffleRec2} to the product of every summand in \eqref{eq:GeneralizedShuffle} with the right factor $1\otimes{h}$, we first split off $\ep(f\odot{g})\otimes\et$ which yields $(\ep(f\odot{g})\otimes\et)\cdot(1\otimes{h})=\ep(f\odot{g})\otimes{h}$ by the base case of \eqref{eq:GeneralizedShuffleRec2}.
 For multiplying the remaining summands in
 \[
  (1\otimes{f})\cdot(1\otimes{g})-\ep(f\odot{g})\otimes\et = 1\otimes(f\shuffle{g})+\sum_{i=0}^{n-1}\sum_{j=\max(0,1-i)}^{m-1}\ep(f_{i+1}^n\odot{g_{j+1}^m})\otimes(f_1^i\shuffle{g_1^j})
 \]
 by $1\otimes{h}$ using \eqref{eq:GeneralizedShuffleRec2}, we identify the first tensor factor of the terms arising in $f_1^i\shuffle{g_1^j}$ by the recursion \eqref{eq:shufflerec} of the shuffle product, yielding $f_1^i\shuffle{g_1^j}=f_1\otimes(f_2^i\shuffle{g_1^j})+g_1\otimes(f_1^i\shuffle{g_2^j})$ for $i,j\ge1$.
 For $i=0$ resp.\ $j=0$, we have $f_1^0\shuffle{g_1^j}=g_1\otimes(f_1^0\shuffle{g_2^j})$ resp.\ $f_1^i\shuffle{g_1^0}=f_1\otimes(f_2^i\shuffle{g_1^0})$ instead.
 Altogether, we obtain $((1\otimes{f})\cdot(1\otimes{g}))\cdot(1\otimes{h})$ as
 \begin{multline*}
  (1\otimes{f_1})\circledast\bigg((1\otimes(f_2^n\shuffle{g}))\cdot(1\otimes{h})+\sum_{i=1}^{n-1}\sum_{j=0}^{m-1}(\ep(f_{i+1}^n\odot{g_{j+1}^m})\otimes(f_2^i\shuffle{g_1^j}))\cdot(1\otimes{h})\bigg)\\
  +(1\otimes{g_1})\circledast\bigg((1\otimes(f\shuffle{g_2^m}))\cdot(1\otimes{h})+\sum_{i=0}^{n-1}\sum_{j=1}^{m-1}(\ep(f_{i+1}^n\odot{g_{j+1}^m})\otimes(f_1^i\shuffle{g_2^j}))\cdot(1\otimes{h})\bigg)\\
  +(1\otimes{h_1})\circledast\bigg((1\otimes(f\shuffle{g}))\cdot(1\otimes{h_2^l})+\sum_{i=0}^{n-1}\sum_{j=\max(0,1-i)}^{m-1}(\ep(f_{i+1}^n\odot{g_{j+1}^m})\otimes(f_1^i\shuffle{g_1^j}))\cdot(1\otimes{h_2^l})\bigg)\\
  +\ep((f\shuffle{g})\odot{h})\otimes\et+\sum_{i=0}^{n-1}\sum_{j=\max(0,1-i)}^{m-1}(\ep(f_{i+1}^n\odot{g_{j+1}^m}){\cdot}\ep((f_1^i\shuffle{g_1^j})\odot{h}))\otimes\et+\ep(f\odot{g})\otimes{h}.
 \end{multline*}
 Using definition \eqref{eq:GeneralizedShuffle} in reverse for the products $(1\otimes{f_2^n})\cdot(1\otimes{g})$, $(1\otimes{f})\cdot(1\otimes{g_2^m})$, and $(1\otimes{f})\cdot(1\otimes{g})$, we can collect terms above to obtain
 \begin{multline*}
  ((1\otimes{f})\cdot(1\otimes{g}))\cdot(1\otimes{h}) = (1\otimes{f_1})\circledast\big(((1\otimes{f_2^n})\cdot(1\otimes{g}))\cdot(1\otimes{h})\big)\\
  +(1\otimes{g_1})\circledast\big(((1\otimes{f})\cdot(1\otimes{g_2^m}))\cdot(1\otimes{h})\big)+(1\otimes{h_1})\circledast\big(((1\otimes{f})\cdot(1\otimes{g}))\cdot(1\otimes{h_2^l})\big)\\
  +\ep((f\shuffle{g})\odot{h})\otimes\et+\sum_{j=0}^{m-1}\sum_{i=\max(0,1-j)}^{n-1}(\ep(f_{i+1}^n\odot{g_{j+1}^m}){\cdot}\ep((f_1^i\shuffle{g_1^j})\odot{h}))\otimes\et.
 \end{multline*}
 Analogously, we have
 \begin{multline*}
  (1\otimes{f})\cdot((1\otimes{g})\cdot(1\otimes{h})) = (1\otimes{f_1})\circledast\big((1\otimes{f_2^n})\cdot((1\otimes{g})\cdot(1\otimes{h}))\big)\\
  +(1\otimes{g_1})\circledast\big((1\otimes{f})\cdot((1\otimes{g_2^m})\cdot(1\otimes{h}))\big)+(1\otimes{h_1})\circledast\big((1\otimes{f})\cdot((1\otimes{g})\cdot(1\otimes{h_2^l}))\big)\\
  +\ep(f\odot(g\shuffle{h}))\otimes\et+\sum_{j=0}^{m-1}\sum_{k=\max(0,1-j)}^{n-1}(\ep(g_{j+1}^m\odot{h_{k+1}^l}){\cdot}\ep(f\odot(g_1^j\shuffle{h_1^k})))\otimes\et.
 \end{multline*}
 In the difference $((1\otimes{f})\cdot(1\otimes{g}))\cdot(1\otimes{h})-(1\otimes{f})\cdot((1\otimes{g})\cdot(1\otimes{h}))$, all terms in $\mathcal{C}_2\otimes\widetilde{\mathcal{T}}$ cancel by the induction hypothesis and what remains is of the form $\tilde{c}\otimes\et$, where $\tilde{c}\in\mathcal{C}_2$ is exactly \eqref{eq:ConstRel} and hence is zero as an element of $\mathcal{C}_2$.
 Note that for arbitrary $c,d,e\in\mathcal{C}_2$, we have
 \begin{multline*}
  ((c\otimes{f})\cdot(d\otimes{g}))\cdot(e\otimes{h})-(c\otimes{f})\cdot((d\otimes{g})\cdot(e\otimes{h}))\\
  =((c{\cdot}d{\cdot}e)\otimes\et)\cdot\big(((1\otimes{f})\cdot(1\otimes{g}))\cdot(1\otimes{h})-(1\otimes{f})\cdot((1\otimes{g})\cdot(1\otimes{h}))\big).
 \end{multline*}
 This completes the induction.
\end{proof}

Later, in Section~\ref{sec:shufflealgebra}, we will explore the relation of $(\mathcal{C}_2\otimes\mathcal{T},+,\cdot)$ with $(\mathcal{T},+,\shuffle)$ further.

\subsection{Derivation and integration}
\label{sec:DerInt}
On the $\mathcal{C}$-algebra $\mathrm{IDR}(\mathcal{R})$ constructed in Section~\ref{sec:AlgebraStructure}, we define derivation and integration as follows.
\begin{definition}\label{def:DerInt}
 Let $f \in \mathcal{R}_\T^{\otimes{n}}$ be a pure tensor for some $n \in \{0,1,2,\dots\}$ and let $c=c_1\otimes{c_2}\in\mathcal{C}_1\otimes\mathcal{C}_2$ and $f_0 \in \mathcal{R}$.
 We extend the derivation of $\mathcal{R}$ to a $\mathcal{C}$-linear map on $\mathrm{IDR}(\mathcal{R})$ by
 \begin{equation}\label{eq:DefDeriv}
  \Der(f_0\otimes{c}\otimes{f}):=
  \begin{cases}(\Der{f_0})\otimes{c}\otimes{f}&n=0\\
  (\Der{f_0})\otimes{c}\otimes{f}+(f_0f_1)\otimes{c}\otimes{f_2^n}&n>0\end{cases}.
 \end{equation}
 We define the integration recursively by
 \begin{equation}\label{eq:DefInt}
  \Int(f_0\otimes{c}\otimes{f}):=
  \begin{cases}(\Q{f_0})\otimes{c}\otimes{f}-1\otimes(c_1{\cdot}\ep(\Q{f_0}))\otimes{c_2}\otimes{f}+{}\\\quad{}+1\otimes{c}\otimes((f_0-\Der\Q{f_0})\otimes{f})&n=0\\[\smallskipamount]
  (\Q{f_0})\otimes{c}\otimes{f}-1\otimes(c_1{\cdot}\ep((\Q{f_0})\otimes{f}))\otimes{c_2}\otimes\et-{}\\\quad{}-\Int(((\Q{f_0})f_1)\otimes{c}\otimes{f_2^n})+1\otimes{c}\otimes((f_0-\Der\Q{f_0})\otimes{f})&n>0\end{cases}
 \end{equation}
 and extend it $\mathcal{C}$-linearly to all of $\mathrm{IDR}(\mathcal{R})$.
\end{definition}
We also define the $\mathcal{C}$-subalgebra
\begin{equation}\label{eq:Cbar}
 \bar{\mathcal{C}} := \mathcal{C}\otimes\mathcal{C}_1\otimes\mathcal{C}_2\otimes\mathcal{C}\et \subseteq \mathrm{IDR}(\mathcal{R}).
\end{equation}
For every $f_0\in\mathcal{R}$, $c_1,d_1\in\mathcal{C}_1$, $c_2,d_2\in\mathcal{C}_2$, $n\in\mathbb{N}$, and pure tensor $f\in\mathcal{R}_\T^{\otimes{n}}$, it is straightforward to check by the definitions that
\begin{align}
 \Der((1\otimes{c_1}\otimes{c_2}\otimes\et)\cdot(f_0\otimes{d_1}\otimes{d_2}\otimes{f})) &= (1\otimes{c_1}\otimes{c_2}\otimes\et)\cdot\Der(f_0\otimes{d_1}\otimes{d_2}\otimes{f})\label{eq:linDer}\\
 \Int((1\otimes{c_1}\otimes{c_2}\otimes\et)\cdot(f_0\otimes{d_1}\otimes{d_2}\otimes{f})) &= (1\otimes{c_1}\otimes{c_2}\otimes\et)\cdot\Int(f_0\otimes{d_1}\otimes{d_2}\otimes{f}).\label{eq:linInt}
\end{align}
Consequently, $\Der$ and $\Int$ are even $\bar{\mathcal{C}}$-linear.
In the following lemmas, we show relevant properties of the above definitions, which are needed for the proof of Theorem~\ref{thm:embedding}.
In particular, Lemmas \ref{lem:Leibniz} and \ref{lem:const} show that $\Der$ satisfies the Leibniz rule and yield $\bar{\mathcal{C}}$ as the ring of constants in $\mathrm{IDR}(\mathcal{R})$.

\begin{lemma}\label{lem:Leibniz}
 The $\mathcal{C}$-linear map $\Der$ defined by \eqref{eq:DefDeriv} is a derivation on $\mathrm{IDR}(\mathcal{R})$.
\end{lemma}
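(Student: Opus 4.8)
The plan is to verify the Leibniz rule $\Der(uv) = (\Der u)v + u\Der v$ on $\mathrm{IDR}(\mathcal{R})$. Since $\Der$ is defined $\mathcal{C}$-linearly and multiplication is $\mathcal{C}$-bilinear, it suffices to check the rule on pure tensors $u = f_0\otimes c\otimes f$ and $v = g_0\otimes d\otimes g$ with $c,d\in\mathcal{C}_1\otimes\mathcal{C}_2$, $f\in\mathcal{R}_\T^{\otimes n}$, $g\in\mathcal{R}_\T^{\otimes m}$. The first reduction I would make is to exploit the $\bar{\mathcal{C}}$-linearity recorded in \eqref{eq:linDer}: the $\mathcal{C}_1\otimes\mathcal{C}_2$-parts $c,d$ merely multiply through on both sides of the Leibniz identity and carry no $\Der$-action, so they can be factored out. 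This reduces the claim to the case $c = d = 1$, i.e.\ to checking the rule for elements of the form $f_0\otimes\et\cdot$(tensor part) where the product structure is governed by \eqref{eq:GeneralizedShuffle} and the derivative by \eqref{eq:DefDeriv}.

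The core computation then splits according to whether $n$ and $m$ vanish. When $n=m=0$ the tensor parts are trivial, the product is just multiplication in $\mathcal{R}$, and the rule is exactly the Leibniz rule in $(\mathcal{R},\Der)$. The substantive case is $n,m\ge 1$. Here I would use the recursive description of the product \eqref{eq:GeneralizedShuffleRec2} together with the recursion \eqref{eq:shufflerec} for $\shuffle$, and run an induction on $n+m$. The idea is that \eqref{eq:DefDeriv} peels off the leading tensor factor $f_1$ (multiplying it into $f_0$), and \eqref{eq:GeneralizedShuffleRec2} expresses $u\cdot v$ in terms of $\circledast$ with the leading factor split off and shorter products $(1\otimes f_2^n)\cdot(1\otimes g)$ and $(1\otimes f)\cdot(1\otimes g_2^m)$ to which the induction hypothesis applies. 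The constant term $\ep(f\odot g)\otimes\et$ appearing in \eqref{eq:GeneralizedShuffleRec2} lies in $\bar{\mathcal{C}}$, so $\Der$ annihilates it, consistent with both sides.

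The main obstacle I anticipate is the bookkeeping needed to match the derivative of the product with the product of derivatives through the $\circledast$-layer. Concretely, $\Der$ acts on a tensor $f_0\otimes c\otimes(f_1\otimes f_2^n)$ by producing both a term $(\Der f_0)\otimes c\otimes f$ and a term $(f_0 f_1)\otimes c\otimes f_2^n$; when this is applied after \eqref{eq:GeneralizedShuffleRec2}, I must verify that the $\circledast$-prefixes $(1\otimes f_1)$, $(1\otimes g_1)$ recombine correctly so that the leading-factor contributions regroup into $(\Der u)\cdot v + u\cdot\Der v$. The key compatibility to establish is that $\Der$ commutes past $\circledast$ in the expected Leibniz-like manner on the relevant terms, i.e.\ $\Der\big((1\otimes f_1)\circledast w\big) = (f_0 f_1)\otimes\cdots$ contributions line up with the inductive expansion of $w$. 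Since $\Der$ does not satisfy a Leibniz rule for $\circledast$ in general, the precise mechanism is that the $f_0$-prefactor rides along the outer $\circledast$ while the induction handles the inner product, and the two halves of \eqref{eq:DefDeriv} distribute across the two summands of the recursion \eqref{eq:GeneralizedShuffleRec2}. Once this matching is set up carefully, the cancellations are forced and the induction closes; I do not expect any genuinely new relation (beyond \eqref{eq:shufflerec} and the product recursion) to be required.
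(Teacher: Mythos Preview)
Your reduction to $c=d=1$ via \eqref{eq:linDer} and your plan to use \eqref{eq:GeneralizedShuffleRec2} in the case $n,m\ge1$ match the paper's argument exactly. Two points of difference are worth noting. First, you skip the mixed cases $n=0,\,m>0$ (and symmetrically $n>0,\,m=0$); the paper treats these separately, and they are needed since the recursion \eqref{eq:GeneralizedShuffleRec2} degenerates there and any induction on $n+m$ will land in them. They are, as you would expect, a short direct computation.

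Second, and more interestingly, the induction on $n+m$ you propose is unnecessary. Once you expand $(1\otimes f)\cdot(1\otimes g)$ via \eqref{eq:GeneralizedShuffleRec2}, each summand $(1\otimes f_1)\circledast A$ with $A\in\mathcal{C}_2\otimes\mathcal{T}$ can be differentiated \emph{directly} without knowing anything further about $A$: writing $A=\sum_i c_i\otimes h_i$, the definition \eqref{eq:DefDeriv} gives
\[
 \Der\big((f_0g_0)\otimes1\otimes[(1\otimes f_1)\circledast A]\big)=(\Der f_0g_0)\otimes1\otimes[(1\otimes f_1)\circledast A]+(f_0g_0f_1)\otimes1\otimes A,
\]
since prepending $f_1$ makes the leading tensor factor explicit. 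The three $(\Der f_0g_0)$-terms then recombine via \eqref{eq:GeneralizedShuffleRec2} in reverse into $(\Der f_0g_0)\otimes1\otimes[(1\otimes f)\cdot(1\otimes g)]$, after which the Leibniz rule in $\mathcal{R}$ finishes the job. So the ``inner products'' $(1\otimes f_2^n)\cdot(1\otimes g)$ never need to be expanded or handled inductively; the paper's proof is a single direct calculation rather than an induction. Your version would work, but it does more than is needed.
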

\begin{proof}
 By $\mathcal{C}$-linearity and \eqref{eq:linDer}, it suffices to show that $\Der$ satisfies $\Der(\bar{f}\cdot\bar{g})=(\Der\bar{f})\cdot\bar{g}+\bar{f}\cdot\Der\bar{g}$, where $\bar{f}=f_0\otimes1\otimes1\otimes{f}$ and $\bar{g}=g_0\otimes1\otimes1\otimes{g}$ with $f_0,g_0\in\mathcal{R}$ and pure tensors $f\in\mathcal{R}_\T^{\otimes{n}}$ and $g\in\mathcal{R}_\T^{\otimes{m}}$.
 We distinguish four cases.
 Note that $\bar{f}\cdot\bar{g} = (f_0g_0)\otimes1\otimes1\otimes(f\otimes{g})$ whenever $n=0$ or $m=0$.
 For $n=m=0$ it easily follows from the Leibniz rule on $\mathcal{R}$ that
 \[
  \Der(\bar{f}\cdot\bar{g}) = ((\Der{f_0})g_0+f_0\Der{g_0})\otimes1\otimes1\otimes(f\otimes{g}) = (\Der\bar{f})\cdot\bar{g}+\bar{f}\cdot\Der\bar{g}.
 \]
 If $n=0$ and $m>0$, using $c\in\mathcal{C}$ such that $f=c\et$, we simply rearrange terms to obtain
 \begin{align*}
  \Der(\bar{f}\cdot\bar{g}) &= (c\Der{f_0g_0})\otimes1\otimes1\otimes{g}+(cf_0g_0g_1)\otimes1\otimes1\otimes{g_2^m}\\
  &= (c(\Der{f_0})g_0)\otimes1\otimes1\otimes{g}+(cf_0\Der{g_0})\otimes1\otimes1\otimes{g}+(cf_0g_0g_1)\otimes1\otimes1\otimes{g_2^m}\\
  &= (\Der\bar{f})\cdot\bar{g}+\bar{f}\cdot\Der\bar{g}.
 \end{align*}
 If $n>0$ and $m=0$, the identity is proved analogously.
 Finally, if $n>0$ and $m>0$, we use \eqref{eq:GeneralizedShuffleRec2} to compute
 \begin{align*}
  \Der(\bar{f}\cdot\bar{g}) &= (\Der{f_0g_0})\otimes1\otimes\big((1\otimes{f_1})\circledast\big((1\otimes{f_2^n})\cdot(1\otimes{g})\big)\big)+(f_0g_0f_1)\otimes1\otimes\big((1\otimes{f_2^n})\cdot(1\otimes{g})\big)\\
  &\quad{}+(\Der{f_0g_0})\otimes1\otimes\big((1\otimes{g_1})\circledast\big((1\otimes{f})\cdot(1\otimes{g_2^m})\big)\big)+(f_0g_0g_1)\otimes1\otimes\big((1\otimes{f})\cdot(1\otimes{g_2^m})\big)\\
  &\quad{}+(\Der{f_0g_0})\otimes1\otimes\ep(f\odot{g})\otimes\et\\
  &= (\Der{f_0g_0})\otimes1\otimes\big((1\otimes{f})\cdot(1\otimes{g})\big)\\
  &\quad{}+(f_0g_0f_1)\otimes1\otimes\big((1\otimes{f_2^n})\cdot(1\otimes{g})\big)+(f_0g_0g_1)\otimes1\otimes\big((1\otimes{f})\cdot(1\otimes{g_2^m})\big)\\
  &= ((\Der{f_0})g_0)\otimes1\otimes\big((1\otimes{f})\cdot(1\otimes{g})\big)+(f_0f_1g_0)\otimes1\otimes\big((1\otimes{f_2^n})\cdot(1\otimes{g})\big)\\
  &\quad{}+(f_0\Der{g_0})\otimes1\otimes\big((1\otimes{f})\cdot(1\otimes{g})\big)+(f_0g_0g_1)\otimes1\otimes\big((1\otimes{f})\cdot(1\otimes{g_2^m})\big)\\
  &= (\Der\bar{f})\cdot\bar{g}+\bar{f}\cdot\Der\bar{g}.\qedhere
 \end{align*}
\end{proof}

\begin{lemma}\label{lem:RightInverse}
 On $\mathrm{IDR}(\mathcal{R})$, the $\mathcal{C}$-linear map $\Int$ defined by \eqref{eq:DefInt} is a right inverse of $\Der$.
\end{lemma}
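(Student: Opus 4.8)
The plan is to prove $\Der\Int=\id$ by induction on the tensor length $n$ of the $\mathcal{T}$-factor. Since $\Der$ and $\Int$ are both $\mathcal{C}$-linear and respect the grading of $\mathcal{T}$ by tensor length, it suffices to verify $\Der\Int(f_0\otimes{c}\otimes{f})=f_0\otimes{c}\otimes{f}$ on pure tensors with $f_0\in\mathcal{R}$, $c=c_1\otimes{c_2}\in\mathcal{C}_1\otimes\mathcal{C}_2$, and $f\in\mathcal{R}_\T^{\otimes{n}}$; the recursion in \eqref{eq:DefInt} decreases $n$ and terminates at the base case, so this is well-posed. Two preliminary observations drive the computation. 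First, $f_0-\Der\Q{f_0}\in\mathcal{R}_\T=\ker(\Q)$, since $\Q(f_0-\Der\Q{f_0})=\Q{f_0}-\Q\Der\Q{f_0}=0$ by the identity $\Q\Der\Q=\Q$ of Definition~\ref{def:quasiintegration}; hence the last summand of \eqref{eq:DefInt} is a well-formed element of $\mathrm{IDR}(\mathcal{R})$. Second, by \eqref{eq:DefDeriv} the derivation annihilates every ``pure constant'' of the form $1\otimes{c'}\otimes\et$ (the $\mathcal{R}$-factor being the unit and the $\mathcal{T}$-factor the empty tensor), and it sends $1\otimes{c}\otimes((f_0-\Der\Q{f_0})\otimes{f})$ to $(f_0-\Der\Q{f_0})\otimes{c}\otimes{f}$ through the Leibniz cross-term of \eqref{eq:DefDeriv}.

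For the base case $n=0$ I would apply $\Der$ to the three summands of the first branch of \eqref{eq:DefInt}. The middle term $1\otimes(c_1{\cdot}\ep(\Q{f_0}))\otimes{c_2}\otimes\et$ is a pure constant and is killed by $\Der$; the first term contributes $(\Der\Q{f_0})\otimes{c}\otimes\et$; and the third term contributes $(f_0-\Der\Q{f_0})\otimes{c}\otimes\et$ by the second observation above. These telescope to $f_0\otimes{c}\otimes\et$, as required.

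For the inductive step $n>0$ I would differentiate the four summands of the second branch of \eqref{eq:DefInt}. Applying \eqref{eq:DefDeriv} to $(\Q{f_0})\otimes{c}\otimes{f}$ produces $(\Der\Q{f_0})\otimes{c}\otimes{f}$ together with the cross-term $((\Q{f_0})f_1)\otimes{c}\otimes{f_2^n}$; the constant middle term dies; the recursive summand $-\Int(((\Q{f_0})f_1)\otimes{c}\otimes{f_2^n})$ becomes, after applying $\Der$ and invoking the induction hypothesis on the shorter tensor $f_2^n$, exactly $-((\Q{f_0})f_1)\otimes{c}\otimes{f_2^n}$; and the final summand yields $(f_0-\Der\Q{f_0})\otimes{c}\otimes{f}$. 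The two copies of $((\Q{f_0})f_1)\otimes{c}\otimes{f_2^n}$ cancel, and the surviving $(\Der\Q{f_0})\otimes{c}\otimes{f}$ and $(f_0-\Der\Q{f_0})\otimes{c}\otimes{f}$ add up to $f_0\otimes{c}\otimes{f}$, completing the induction.

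The whole argument is essentially bookkeeping, so I do not expect a genuine obstacle. The two points demanding care are the membership $f_0-\Der\Q{f_0}\in\mathcal{R}_\T$ (which makes the construction legitimate) and the precise matching of the Leibniz cross-term obtained by differentiating $(\Q{f_0})\otimes{c}\otimes{f}$ against the differentiated recursive term delivered by the induction hypothesis; it is exactly this cancellation, together with the vanishing of the middle constant term under $\Der$, that makes the telescoping work. Unlike the associativity proof in Theorem~\ref{thm:Calgebra}, no relation from $\mathcal{C}_2$ is needed here.
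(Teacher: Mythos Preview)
Your proof is correct and follows essentially the same approach as the paper: reduce to pure tensors by $\mathcal{C}$-linearity, then induct on the length $n$ of the $\mathcal{T}$-factor, handling the base case $n=0$ by direct computation and the step $n>0$ by applying the induction hypothesis to the recursive summand so that the cross-term from differentiating $(\Q{f_0})\otimes{c}\otimes{f}$ cancels. Your added remarks on well-formedness ($f_0-\Der\Q{f_0}\in\mathcal{R}_\T$) and on the vanishing of the constant term under $\Der$ are correct supporting observations; the phrase about $\Int$ ``respecting the grading'' is a little imprecise (the recursion mixes degrees), but the reduction goes through on $\mathcal{C}$-linearity alone, so this does not affect the argument.
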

\begin{proof}
 By $\mathcal{C}$-linearity of $\Der$ and $\Int$, it suffices to verify that
 \begin{equation}\label{eq:RightInverse}
  \Der\Int(f_0\otimes{c}\otimes{f})=f_0\otimes{c}\otimes{f}
 \end{equation}
 holds for all $n \in \{0,1,2,\dots\}$, $f_0 \in \mathcal{R}$, $c=c_1\otimes{c_2}\in\mathcal{C}_1\otimes\mathcal{C}_2$, and all pure tensors $f \in \mathcal{R}_\T^{\otimes{n}}$.
 We proceed by induction on $n$.
 If $n=0$, we can assume $f=\et$ and obtain
 \begin{align*}
  \Der\Int(f_0\otimes{c}\otimes\et) &= \Der\big((\Q{f_0})\otimes{c}\otimes\et-1\otimes(c_1{\cdot}\ep(\Q{f_0}))\otimes{c_2}\otimes\et+1\otimes{c}\otimes(f_0-\Der\Q{f_0})\big)\\
  &= (\Der\Q{f_0})\otimes{c}\otimes\et+(f_0-\Der\Q{f_0})\otimes{c}\otimes\et\\
  &= f_0\otimes{c}\otimes\et.
 \end{align*}
 For $n>0$, we similarly compute
 \begin{align*}
  \Der\Int(f_0\otimes{c}\otimes{f}) &= \Der\big((\Q{f_0})\otimes{c}\otimes{f}-1\otimes(c_1{\cdot}\ep((\Q{f_0})\otimes{f}))\otimes{c_2}\otimes\et-{}\\
  &\quad{}-\Int(((\Q{f_0})f_1)\otimes{c}\otimes{f_2^n})+1\otimes{c}\otimes((f_0-\Der\Q{f_0})\otimes{f})\big)\\
  &= (\Der\Q{f_0})\otimes{c}\otimes{f}+((\Q{f_0})f_1)\otimes{c}\otimes{f_2^n}-{}\\
  &\quad{}-\Der\Int(((\Q{f_0})f_1)\otimes{c}\otimes{f_2^n})+(f_0-\Der\Q{f_0})\otimes{c}\otimes{f},
 \end{align*}
 which equals $f_0\otimes{c}\otimes{f}$ by the induction hypothesis.
\end{proof}

On $\mathrm{IDR}(\mathcal{R})$, the derivation and its right inverse defined in Definition~\ref{def:DerInt} induce an operation $\E$ by
\begin{equation}\label{eq:DefEval}
 \E:=\id-\Int\Der
\end{equation}
that maps to $\const_\Der(\mathrm{IDR}(\mathcal{R}))$.
The action of $\E$ can also be characterized explicitly as follows.
\begin{lemma}\label{lem:Evaluation}
 Let $f_0\otimes{c_1}\otimes{c_2}\otimes{f} \in \mathrm{IDR}(\mathcal{R})$ with $f \in \mathcal{R}_\T^{\otimes{n}}$ for some $n \in \{0,1,2,\dots\}$. Then,
 \begin{equation}\label{eq:Evaluation}
  \E(f_0\otimes{c_1}\otimes{c_2}\otimes{f})=
  \begin{cases}1\otimes\big(c_1{\cdot}\big(\underbrace{f_0-\Q\Der{f_0}}_{\in \mathcal{C} \subseteq \mathcal{C}_1}+\ep(\Q\Der{f_0})\big)\big)\otimes{c_2}\otimes{f}&n=0\\
  1\otimes(c_1{\cdot}\ep((\Q\Der{f_0})\otimes{f}))\otimes{c_2}\otimes\et&n>0\end{cases}.
 \end{equation}
\end{lemma}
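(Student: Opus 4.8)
The plan is to compute $\E = \id - \Int\Der$ directly on a pure tensor $f_0\otimes c_1\otimes c_2\otimes f$, substituting the explicit formula for $\Der$ from \eqref{eq:DefDeriv} and then applying the recursive formula for $\Int$ from \eqref{eq:DefInt} to the result. Since $\E$ is $\mathcal{C}$-linear (inherited from $\Der$ and $\Int$) and, by \eqref{eq:linDer}--\eqref{eq:linInt}, even $\bar{\mathcal{C}}$-linear, it suffices to treat a single pure tensor, and I expect the two cases $n=0$ and $n>0$ to match the case split in the statement.

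First I would handle the base case $n=0$. Here $\Der(f_0\otimes c\otimes\et)=(\Der f_0)\otimes c\otimes\et$, so I must apply $\Int$ to this element. Crucially, $\Der f_0$ lies in $\mathcal{R}$ but need not lie in $\Der\mathcal{R}$ in a way compatible with $\Q$; however, since $\Q$ is a quasi-integration with $\Der\Q\Der=\Der$, I can use that $\Q\Der f_0$ is the selected antiderivative and $\Der f_0-\Der\Q\Der f_0=0$. Plugging $\Der f_0$ into the $n=0$ branch of \eqref{eq:DefInt} (with $f_0$ there replaced by $\Der f_0$), the last summand $1\otimes c\otimes((\Der f_0-\Der\Q\Der f_0)\otimes\et)$ vanishes because its tensor argument is zero, leaving $\Int\Der(f_0\otimes c\otimes\et)=(\Q\Der f_0)\otimes c\otimes\et-1\otimes(c_1{\cdot}\ep(\Q\Der f_0))\otimes c_2\otimes\et$. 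Then $\E(f_0\otimes c\otimes\et)=f_0\otimes c\otimes\et-\Int\Der(f_0\otimes c\otimes\et)$, and I would rewrite $f_0-\Q\Der f_0$ as an element of $\mathcal{C}\subseteq\mathcal{C}_1$ (using the decomposition $\mathcal{R}=\mathcal{C}\oplus\mathcal{R}_\mathrm{J}$), absorbing it into the $c_1$ factor together with the $\ep(\Q\Der f_0)$ term; this yields precisely the $n=0$ line of \eqref{eq:Evaluation}.

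For the inductive case $n>0$, I would again start from $\Der(f_0\otimes c\otimes f)=(\Der f_0)\otimes c\otimes f+(f_0f_1)\otimes c\otimes f_2^n$, apply $\Int$ to each summand, and look for massive cancellation. Applying the $n>0$ branch of \eqref{eq:DefInt} to $(\Der f_0)\otimes c\otimes f$ produces a $(\Q\Der f_0)\otimes c\otimes f$ term, an $\ep$-term sitting in $\mathcal{C}_1$, a recursive integral $-\Int(((\Q\Der f_0)f_1)\otimes c\otimes f_2^n)$, and a term $1\otimes c\otimes((\Der f_0-\Der\Q\Der f_0)\otimes f)$ which vanishes. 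The key observation I would exploit is that $(f_0f_1)\otimes c\otimes f_2^n$, once integrated, should combine with the recursive integral and with the $(\Q\Der f_0)\otimes c\otimes f$ piece so that, upon subtracting from $f_0\otimes c\otimes f$, everything in $\mathcal{R}\otimes\cdots\otimes\widetilde{\mathcal{T}}$ telescopes away and only the single $\mathcal{C}_1$-valued constant $1\otimes(c_1{\cdot}\ep((\Q\Der f_0)\otimes f))\otimes c_2\otimes\et$ survives. I expect the main obstacle to be bookkeeping this cancellation cleanly: the two integrals of the split terms share the same recursive tail, and I would argue that $f_0\otimes c\otimes f - (\Q\Der f_0)\otimes c\otimes f - \Int(((f_0-\Q\Der f_0\cdot\text{(adjustment)})f_1)\otimes\cdots)$ collapses using $f_0-\Q\Der f_0\in\mathcal{C}$ and $(f_0f_1)-((\Q\Der f_0)f_1)=(f_0-\Q\Der f_0)f_1$. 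Rather than a formal induction on $n$, the cleanest route is a direct term-by-term comparison showing the recursive integrals cancel and the residual constant matches, so I would present it as a direct computation exploiting the definitions of $\Der$, $\Int$, and the decomposition $f_0=(f_0-\Q\Der f_0)+\Q\Der f_0$.
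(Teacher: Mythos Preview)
Your approach is essentially the same as the paper's: compute $\E=\id-\Int\Der$ directly, using $\Der\Q\Der=\Der$ to kill the last summand of \eqref{eq:DefInt} in the $n=0$ case, and combining the two integrals via linearity in the $n>0$ case. The one concrete ingredient you should make explicit is that $f_1\in\mathcal{R}_\T=\ker(\Q)$, hence $\Q\big((f_0-\Q\Der f_0)f_1\big)=(f_0-\Q\Der f_0)\Q f_1=0$; this is what reduces $\Int\big(((f_0-\Q\Der f_0)f_1)\otimes c\otimes f_2^n\big)$ to $(f_0-\Q\Der f_0)\otimes c\otimes f$ in a single application of \eqref{eq:DefInt}, so no induction or telescoping is needed.
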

\begin{proof}
 By $\mathcal{C}$-linearity, we can assume w.l.o.g.\ that $f \in \mathcal{R}_\T^{\otimes{n}}$ is a pure tensor.
 For shorter notation, we abbreviate $c:=c_1\otimes{c_2}$.
 If $n=0$, then by definition we have
 \begin{align*}
  \E(f_0\otimes{c}\otimes{f}) &= f_0\otimes{c}\otimes{f} - \Int((\Der{f_0})\otimes{c}\otimes{f})\\
  &= f_0\otimes{c}\otimes{f} - \big((\Q\Der{f_0})\otimes{c}\otimes{f}-{}\\
  &\quad{}-1\otimes(c_1{\cdot}\ep(\Q\Der{f_0}))\otimes{c_2}\otimes{f}+1\otimes{c}\otimes((\Der{f_0}-\Der\Q\Der{f_0})\otimes{f})\big)\\
  &= 1\otimes(c_1{\cdot}(\underbrace{f_0-\Q\Der{f_0}}_{\in \mathcal{C} \subseteq \mathcal{C}_1}))\otimes{c_2}\otimes{f} + 1\otimes(c_1{\cdot}\ep(\Q\Der{f_0}))\otimes{c_2}\otimes{f}\\
  &= 1\otimes\big(c_1{\cdot}\big(f_0-\Q\Der{f_0}+\ep(\Q\Der{f_0})\big)\big)\otimes{c_2}\otimes{f}.
 \end{align*}
 If $n>0$, then we can exploit $f_0-\Q\Der{f_0} \in \mathcal{C}$ and $\Q{f_1}=0$ in order to compute  the integral $\Int(((f_0-\Q\Der{f_0})f_1)\otimes{c}\otimes{f_2^n}) = (f_0-\Q\Der{f_0})\otimes{c}\otimes{f}$ regardless of $n$.
 Then, we obtain
 \begin{align*}
  \E(f_0\otimes{c}\otimes{f}) &= f_0\otimes{c}\otimes{f} - \Int\big((\Der{f_0})\otimes{c}\otimes{f}+(f_0f_1)\otimes{c}\otimes{f_2^n}\big)\\
  &= f_0\otimes{c}\otimes{f} - \big((\Q\Der{f_0})\otimes{c}\otimes{f} - 1\otimes(c_1{\cdot}\ep((\Q\Der{f_0})\otimes{f}))\otimes{c_2}\otimes\et+{}\\
  &\quad{}+\Int(((f_0-\Q\Der{f_0})f_1)\otimes{c}\otimes{f_2^n}) + 1\otimes{c}\otimes((\Der{f_0}-\Der\Q\Der{f_0})\otimes{f})\big)\\
  &= 1\otimes(c_1{\cdot}\ep((\Q\Der{f_0})\otimes{f}))\otimes{c_2}\otimes\et.\qedhere
 \end{align*}
\end{proof}

This explicit characterization allows a simple proof of which elements of $\mathrm{IDR}(\mathcal{R})$ vanish under $\Der$.

\begin{lemma}\label{lem:const}
 The constants of $\mathrm{IDR}(\mathcal{R})$ are given by $\const_\Der(\mathrm{IDR}(\mathcal{R}))=\bar{\mathcal{C}}$.
\end{lemma}
\begin{proof}
 By \eqref{eq:DefEval} and Lemma~\ref{lem:RightInverse}, we have that $\Der\E=0$ and that $\Der{f}=0$ implies $\E{f}=f$ for all $f \in \mathrm{IDR}(\mathcal{R})$.
 Together, this implies $\const_\Der(\mathrm{IDR}(\mathcal{R})) = \E\,\mathrm{IDR}(\mathcal{R})$.
 From \eqref{eq:Evaluation}, it follows that $\E\,\mathrm{IDR}(\mathcal{R})=\bar{\mathcal{C}}$.
\end{proof}

Finally, we can prove that $(\mathrm{IDR}(\mathcal{R}),\Der,\Int)$ indeed is an integro-differential ring and that $\mathcal{R}$ can be embedded accordingly.

\begin{theorem}\label{thm:embedding}
 With $\Der$ and $\Int$ defined in Definition~\ref{def:DerInt}, the following hold for $\mathrm{IDR}(\mathcal{R})$ defined in Definition~\ref{def:DefIDR}:
 \begin{enumerate}
  \item $(\mathrm{IDR}(\mathcal{R}),\Der,\Int)$ is a commutative integro-differential ring.
  \item The $\mathcal{C}$-algebra homomorphism $\iota:\mathcal{R}\to\mathrm{IDR}(\mathcal{R})$ defined by \eqref{eq:DefIota} is an injective differential ring homomorphism.
  \item For $n \in \{1,2,\dots\}$, $f_0\in\mathcal{R}$, and pure tensors $f\in\mathcal{R}_\T^{\otimes{n}}$, we have
   \[
    \iota(f_0)\cdot\Int\iota(f_1){\cdot}\dots{\cdot}\Int\iota(f_n) = f_0\otimes1\otimes1\otimes{f}.
   \]
 \end{enumerate}
\end{theorem}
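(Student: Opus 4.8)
The plan is to treat the three claims mostly as an assembly of the preceding lemmas, with the genuine computational content concentrated in the third. For the first claim I would recall that $\mathrm{IDR}(\mathcal{R})=\mathcal{R}\otimes\mathcal{C}_1\otimes\mathcal{C}_2\otimes\mathcal{T}$ is a commutative $\mathcal{C}$-algebra, being the $\mathcal{C}$-tensor product of the commutative algebras $\mathcal{R}$, $\mathcal{C}_1=\Sym(\ep(M_1))$, and $\mathcal{C}_2\otimes\mathcal{T}$ (the last commutative by Theorem~\ref{thm:Calgebra}). By Lemma~\ref{lem:Leibniz}, $\Der$ is a derivation, so $(\mathrm{IDR}(\mathcal{R}),\Der)$ is a differential ring, and by Lemma~\ref{lem:const} its ring of constants is exactly $\bar{\mathcal{C}}$. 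It then remains only to match Definition~\ref{def:IDR}: the integration must be $\const_\Der$-linear and a right inverse of $\Der$. The former is $\bar{\mathcal{C}}$-linearity, which is \eqref{eq:linInt}, and the latter is Lemma~\ref{lem:RightInverse}; surjectivity of $\Der$ is an immediate consequence of $\Der\Int=\id$. Hence $(\mathrm{IDR}(\mathcal{R}),\Der,\Int)$ is a commutative integro-differential ring.

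For the second claim, the map $\iota$ of \eqref{eq:DefIota} is the canonical inclusion of the tensor factor $\mathcal{R}$; it is a unital $\mathcal{C}$-algebra homomorphism because $\iota(f)\cdot\iota(g)=(fg)\otimes1\otimes1\otimes(\et\shuffle\et)=\iota(fg)$, and it intertwines the derivations since the length-zero case of \eqref{eq:DefDeriv} gives $\Der\iota(f)=(\Der{f})\otimes1\otimes1\otimes\et=\iota(\Der{f})$. For injectivity I would exhibit a $\mathcal{C}$-linear retraction $\pi\colon\mathrm{IDR}(\mathcal{R})\to\mathcal{R}$ with $\pi\iota=\id_\mathcal{R}$: each tensor factor $\mathcal{C}_1$, $\mathcal{C}_2$, $\mathcal{T}$ carries a $\mathcal{C}$-linear augmentation onto its unit component, namely the degree-$0$ parts of $\Sym(\ep(M_1))$ and of $T(\mathcal{R}_\T)$, together with the induced map on $\mathcal{C}_2$. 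The only point needing care is $\mathcal{C}_2=\Sym(\ep(M_2))/J_2$: since every generator of $J_2$ in \eqref{eq:ConstRel} lies in the augmentation ideal (its terms have $\ep$-degree $1$ and $2$), the augmentation of $\Sym(\ep(M_2))$ descends to $\mathcal{C}_2$. Tensoring these three augmentations with $\id_\mathcal{R}$ yields $\pi$, whence $\pi\iota=\id_\mathcal{R}$ and $\iota$ is injective.

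The third claim carries the real work. Interpreting the operator convention, the product reads as the nested integral $\iota(f_0)\cdot\Int\bigl(\iota(f_1)\cdot\Int\bigl(\cdots\cdot\Int\iota(f_n)\bigr)\bigr)$, and the decisive observation is that each $f_i\in\mathcal{R}_\T=\ker(\Q)$, so $\Q{f_i}=0$ and $\Der\Q{f_i}=0$, which annihilates every quasi-integration, evaluation, and recursion term of \eqref{eq:DefInt}. I would prove by downward induction on $k$ that the tail $\Int\bigl(\iota(f_k)\cdot\Int(\cdots\Int\iota(f_n))\bigr)$ equals $1\otimes1\otimes1\otimes(f_k\otimes\cdots\otimes{f_n})$. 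The base case $k=n$ is $\Int\iota(f_n)=\Int(f_n\otimes1\otimes1\otimes\et)=1\otimes1\otimes1\otimes(f_n)$, directly from the length-zero branch of \eqref{eq:DefInt} with $\Q{f_n}=0$. For the step, assuming the tail from $k+1$ equals $1\otimes1\otimes1\otimes{g}$ with $g=f_{k+1}\otimes\cdots\otimes{f_n}$, multiplying by $\iota(f_k)$ gives $f_k\otimes1\otimes1\otimes(\et\shuffle{g})=f_k\otimes1\otimes1\otimes{g}$ by \eqref{eq:GeneralizedShuffle}, and then the length-positive branch of \eqref{eq:DefInt} with $\Q{f_k}=0$ leaves only $1\otimes1\otimes1\otimes(f_k\otimes{g})$. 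Finally, multiplying the $k=1$ tail by $\iota(f_0)$ prepends $f_0$ in the $\mathcal{R}$-factor and shuffles $\et$ into the $\mathcal{T}$-factor, producing $f_0\otimes1\otimes1\otimes{f}$.

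I expect the main obstacle to be bookkeeping in the third claim rather than any conceptual difficulty: one must correctly read the nested structure encoded by the operator convention and verify, term by term, that the membership $f_i\in\mathcal{R}_\T$ kills precisely the three nontrivial contributions of \eqref{eq:DefInt}, so that $\Int$ degenerates to simple prepending on the tensor factor. A secondary subtlety is the descent of the augmentation to $\mathcal{C}_2$ in the injectivity argument, which hinges on $J_2$ lying in the augmentation ideal.
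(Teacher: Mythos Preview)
Your proposal is correct and matches the paper's proof in all essentials. The only cosmetic differences are that the paper phrases injectivity via complemented $\mathcal{C}$-submodules rather than an explicit augmentation retraction (the same observation that $J_2$ lies in the augmentation ideal), and organizes the induction in part~3 forward on $n$ rather than downward on $k$; the computational core, namely that $\Q f_i=0$ kills all but the prepending term of \eqref{eq:DefInt}, is identical.
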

\begin{proof}
 By Theorem~\ref{thm:Calgebra} and Lemmas \ref{lem:Leibniz} and \ref{lem:const}, $(\mathrm{IDR}(\mathcal{R}),\Der)$ is a commutative differential ring with constants $\bar{\mathcal{C}}$.
 Then, by $\mathcal{C}$-linearity of $\Int$, it follows from \eqref{eq:linInt} that $\Int$ is indeed $\const_\Der(\mathrm{IDR}(\mathcal{R}))$-linear.
 Finally, Lemma~\ref{lem:RightInverse} shows that $(\mathrm{IDR}(\mathcal{R}),\Der,\Int)$ is a commutative integro-differential ring.
\par
 By construction of $\mathrm{IDR}(\mathcal{R})$ as tensor product of commutative $\mathcal{C}$-algebras, $\iota$ is a $\mathcal{C}$-algebra homomorphism.
 Its image is $\mathcal{R}\otimes\mathcal{C}\otimes\mathcal{C}\otimes\mathcal{C}\et$ and $\iota(1)$ is the unit element.
 By construction, $\mathcal{C}$ is a complemented $\mathcal{C}$-submodule in $\mathcal{C}_1$ and $\mathcal{C}\et$ is complemented in $\mathcal{T}$.
 Also, $\mathcal{C}$ is complemented in $\mathcal{C}_2$ by definition since $J_2$ is contained in the canonical complement of $\mathcal{C}$ in $\Sym(\ep(M_2))$.
 By distributivity of the tensor product of $\mathcal{C}$-modules, this implies injectivity of $\iota$.
 In addition, by \eqref{eq:DefDeriv}, it follows that $\Der\iota(f)=\iota(\Der{f})$ for all $f\in\mathcal{R}$.
\par
 For $n=1$, the identity follows since we obtain $\Int\iota(f_1) = 1\otimes1\otimes1\otimes{f_1}$ from \eqref{eq:DefInt} using $\Q{f_1}=0$.
 For $n\ge2$, we assume $\iota(f_1)\cdot\Int\iota(f_2){\cdot}\dots{\cdot}\Int\iota(f_n) = f_1\otimes1\otimes1\otimes{f_2^n}$ by induction and apply $\Int$ to obtain $\Int\iota(f_1){\cdot}\dots{\cdot}\Int\iota(f_n) = 1\otimes1\otimes1\otimes{f}$ from \eqref{eq:DefInt} using $\Q{f_1}=0$ again.
 Then, the identity follows, which completes the induction.
\end{proof}

Note that the nested integrals $\Int\iota(f_1){\cdot}\dots{\cdot}\Int\iota(f_n)$ with $f_i \in \mathcal{R}_\mathrm{T}$ generate $\mathrm{IDR}(\mathcal{R})$ as a module over $\mathcal{R}\bar{\mathcal{C}}=\mathcal{R}\otimes\mathcal{C}_1\otimes\mathcal{C}_2\otimes\mathcal{C}\et$.

\subsection{Universal property}
\label{sec:universal}

\begin{theorem}\label{thm:universal}
 Let $(\mathcal{S},\Der,\Int)$ be a commutative integro-differential ring and let $\varphi:\mathcal{R}\to\mathcal{S}$ be a differential ring homomorphism.
 Then, there exists a unique integro-differential ring homomorphism $\eta:\mathrm{IDR}(\mathcal{R})\to\mathcal{S}$ such that $\eta(\iota(f))=\varphi(f)$ for all $f\in\mathcal{R}$.
\end{theorem}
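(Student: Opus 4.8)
The plan is to establish uniqueness and existence separately, in both cases exploiting that $\mathrm{IDR}(\mathcal{R})$ is generated as an integro-differential ring by $\iota(\mathcal{R})$.

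\emph{Uniqueness.} I would first observe that each tensor factor of $\mathrm{IDR}(\mathcal{R})$ is reached from $\iota(\mathcal{R})$ by the integro-differential operations. By Theorem~\ref{thm:embedding}(3) the nested integral $1\otimes1\otimes1\otimes{f}$ equals $\Int\iota(f_1){\cdot}\dots{\cdot}\Int\iota(f_n)$, so the $\mathcal{T}$-factor lies in the subring generated by $\iota(\mathcal{R})$ and $\Int$. The generators $\ep(r\otimes{f})$ of $\mathcal{C}_1$ arise as $\E(\iota(r)\cdot\Int\iota(f_1){\cdot}\dots{\cdot}\Int\iota(f_n))$ for $r\in\mathcal{R}_\mathrm{J}$, using $\Q\Der\Q=\Q$ so that $\Q\Der{r}=r$ together with Lemma~\ref{lem:Evaluation}; and the generators $\ep(f\odot{g})$ of $\mathcal{C}_2$ arise as $\E$ of a product of two nested integrals, since by Definition~\ref{def:GeneralizedShuffle} and Lemma~\ref{lem:Evaluation} the only surviving summand of $\E\big((1\otimes1\otimes1\otimes{f})\cdot(1\otimes1\otimes1\otimes{g})\big)$ is $1\otimes1\otimes\ep(f\odot{g})\otimes\et$. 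Since every pure tensor $f_0\otimes{c_1}\otimes{c_2}\otimes{f}$ factors as $\iota(f_0)$ times a constant built from these $\ep$-generators times a nested integral, any integro-differential homomorphism extending $\varphi$ is forced on a generating set, which gives uniqueness.

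\emph{Existence.} I would build $\eta$ from three data determined by $\varphi$: the $\mathcal{C}$-linear map $\tau:=\sigma\circ{T}(\varphi|_{\mathcal{R}_\T}):\mathcal{T}\to\mathcal{S}$ with $\tau(f_1\otimes\dots\otimes{f_n})=\Int\varphi(f_1){\cdot}\dots{\cdot}\Int\varphi(f_n)$, and $\mathcal{C}$-algebra homomorphisms $\eta_1:\mathcal{C}_1\to\const_\Der(\mathcal{S})$ and $\eta_2:\mathcal{C}_2\to\const_\Der(\mathcal{S})$ sending $\ep(r\otimes{f})\mapsto\E(\varphi(r)\tau(f))$ and $\ep(f\odot{g})\mapsto\E\tau(f)\tau(g)$. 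Then I set $\eta(f_0\otimes{c_1}\otimes{c_2}\otimes{f}):=\varphi(f_0)\,\eta_1(c_1)\,\eta_2(c_2)\,\tau(f)$, which is well defined on the $\mathcal{C}$-tensor product because each factor map is $\mathcal{C}$-linear into the commutative ring $\mathcal{S}$. The delicate point is that $\eta_2$ must descend past the ideal $J_2$: here I would show that the image under $\eta_2$ of each defining relation \eqref{eq:ConstRel} is exactly the identity of Corollary~\ref{cor:IDRConstRel} applied to the tensors $\varphi(f),\varphi(g),\varphi(h)\in{T}(\mathcal{S})$. The two expressions differ only in the boundary summands with $i=j=0$ resp.\ $k=j=0$ that are omitted in \eqref{eq:ConstRel} by the bounds $\max(0,1-j)$; each such summand carries a factor $\E\tau(h)$ resp.\ $\E\tau(f)$ with $h,f$ of positive length, which vanishes since $\E\Int=0$. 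Thus \eqref{eq:ConstRel} maps to \eqref{eq:IDRConstRel}, which is zero, so $\eta_2(J_2)=0$. I expect this matching of the constant relations to be the main obstacle.

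\emph{Homomorphism and compatibility with the operations.} That $\eta$ respects the componentwise multiplication of the $\mathcal{R}$- and $\mathcal{C}_1$-factors is immediate; for the twisted product on $\mathcal{C}_2\otimes\mathcal{T}$ from Definition~\ref{def:GeneralizedShuffle} I would apply $\eta$ termwise and recognize the resulting sum as the shuffle relation \eqref{eq:IDRShuffle} in $\mathcal{S}$ for $\varphi(f),\varphi(g)$, so that it collapses to $\tau(f)\tau(g)$; hence $\eta$ is a $\mathcal{C}$-algebra homomorphism, and $\eta(\iota(f))=\varphi(f)$ follows from \eqref{eq:DefIota} and $\tau(\et)=1$. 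For the derivation, $\tau(f)=\Int(\varphi(f_1)\tau(f_2^n))$ gives $\Der\tau(f)=\varphi(f_1)\tau(f_2^n)$ by $\Der\Int=\id$, and comparing with \eqref{eq:DefDeriv} together with $\varphi\Der=\Der\varphi$ yields $\eta\Der=\Der\eta$. Rather than attack $\Int$ directly through its recursion \eqref{eq:DefInt}, I would first prove $\eta\E=\E\eta$ from the explicit formula \eqref{eq:Evaluation}, where the two cases reduce to $\E\varphi(f_0)=\varphi(f_0)-\Int\varphi(\Der{f_0})$ (via $\Der\Q\Der=\Der$) and $\E\tau(f)=0$ for $f$ of positive length. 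Finally, since $\eta$ commutes with $\Der$ and $\E$, for every $x$ both $\eta\Int{x}$ and $\Int\eta(x)$ are antiderivatives of $\eta(x)$ in $\mathcal{S}$, so their difference is a constant whose evaluation is $\E\eta\Int{x}-\E\Int\eta{x}=\eta\E\Int{x}-0=0$; hence $\eta\Int=\Int\eta$, completing the proof that $\eta$ is an integro-differential homomorphism.
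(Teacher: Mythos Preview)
Your construction of $\eta$ and the verification that it is a ring homomorphism commuting with $\Der$ match the paper's proof almost exactly: the paper also builds $\eta_1,\eta_2$ on the generators $\ep(\cdot)$, checks that $\eta_2$ annihilates $J_2$ via Corollary~\ref{cor:IDRConstRel} (with the same observation that the omitted boundary summands carry a factor $\E\Int(\cdot)=0$), and verifies multiplicativity on $\mathcal{C}_2\otimes\mathcal{T}$ by reading \eqref{eq:GeneralizedShuffle} through \eqref{eq:IDRShuffle}. Your uniqueness argument is likewise the same as the paper's.

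The one genuine deviation is your treatment of $\Int$-compatibility. The paper proves $\Int\eta=\eta\Int$ by induction on the tensor length~$n$, using integration by parts \eqref{eq:IBP} in $\mathcal{S}$ to unwind the recursion \eqref{eq:DefInt}. You instead establish $\eta\E=\E\eta$ directly from the closed formula \eqref{eq:Evaluation} and then argue that $\eta\Int{x}-\Int\eta(x)$ is a constant with vanishing evaluation, hence zero. This is correct; the $n>0$ case of $\eta\E=\E\eta$ uses not only $\E\tau(f)=0$ but also that $f_0-\Q\Der f_0\in\mathcal{C}$ so that $\varphi(f_0-\Q\Der f_0)$ can be pulled through $\E$ in $\mathcal{S}$, which you leave implicit but which holds. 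Your route avoids the inductive bookkeeping and the explicit appeal to \eqref{eq:IBP}, at the cost of having to check the two cases of \eqref{eq:Evaluation}; the paper's induction is more mechanical but tracks \eqref{eq:DefInt} term by term. Both are short, and either would be acceptable.
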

\begin{proof}
 Since $\mathcal{C}$ is a subring of $\mathcal{R}$, we can consider $\mathcal{S}$ as a $\mathcal{C}$-algebra in a canonical way via $\varphi$.
 Moreover, we have that $\varphi$ is a $\mathcal{C}$-algebra homomorphism and that $\varphi(\mathcal{C})\subseteq\const_\Der(\mathcal{S})$.
 For any $f_0\in\mathcal{R}_\mathrm{J}$, $n\in\mathbb{N}$, and any pure tensor $f\in\mathcal{R}_\T^{\otimes{n}}$, we define
 \[
  \eta_1(\ep(f_0\otimes{f})):=\E\varphi(f_0)\Int\varphi(f_1)\dots\Int\varphi(f_n)\in\mathcal{S}.
 \]
 Since this definition is $\mathcal{C}$-multilinear, we can extend it uniquely to a $\mathcal{C}$-algebra homomorphism $\eta_1:\mathcal{C}_1\to\mathcal{S}$.
 For any $n,m\in\mathbb{N}^+$ and pure tensors $f\in\mathcal{R}_\T^{\otimes{n}}$ and $g\in\mathcal{R}_\T^{\otimes{m}}$, we define
 \[
  \eta_2(\ep(f\odot{g})):=\E\left(\Int\varphi(f_1)\dots\Int\varphi(f_n)\right)\Int\varphi(g_1)\dots\Int\varphi(g_m)\in\mathcal{S}.
 \]
 Since this definition is not only $\mathcal{C}$-multilinear but also symmetric in $f$ and $g$, we can extend it uniquely to a $\mathcal{C}$-algebra homomorphism $\eta_2:\Sym(\ep(M_2))\to\mathcal{S}$.
 In $\mathcal{S}$, we have $\E\Int{f}=0$ for all $f\in\mathcal{S}$, which implies by Corollary~\ref{cor:IDRConstRel} that $\eta_2$ maps \eqref{eq:ConstRel} to zero for all $n,m,l\in\mathbb{N}^+$ and pure tensors $f \in \mathcal{R}_\T^{\otimes{n}}$, $g \in \mathcal{R}_\T^{\otimes{m}}$, and $h \in \mathcal{R}_\T^{\otimes{l}}$.
 Hence, by passing to the quotient by the ideal $J_2 \subseteq \Sym(\ep(M_2))$, $\eta_2$ uniquely induces a $\mathcal{C}$-algebra homomorphism $\tilde{\eta}_2:\mathcal{C}_2\to\mathcal{S}$.
 For any $c\in\mathcal{C}_2$, $n\in\mathbb{N}$, and pure tensor $f\in\mathcal{R}_\T^{\otimes{n}}$, we define
 \[
  \eta_3(c\otimes{f}):=\tilde{\eta}_2(c)\Int\varphi(f_1)\dots\Int\varphi(f_n)\in\mathcal{S}.
 \]
 Since this definition is $\mathcal{C}$-multilinear, we can extend it uniquely to a $\mathcal{C}$-module homomorphism $\eta_3:\mathcal{C}_2\otimes\mathcal{T}\to\mathcal{S}$.
 Moreover, $\eta_3$ is even a $\mathcal{C}$-algebra homomorphism since, for $n,m\in\mathbb{N}$ and pure tensors $f\in\mathcal{R}_\T^{\otimes{n}}$ and $g\in\mathcal{R}_\T^{\otimes{m}}$, we can apply \eqref{eq:IDRShuffle} in $\mathcal{S}$ to obtain
 \[
  \eta_3(1\otimes{f})\eta_3(1\otimes{g}) = \eta_3(1\otimes(f\shuffle{g}))+\sum_{i=0}^{n-1}\sum_{j=0}^{m-1}\eta_2(\ep(f_{i+1}^n\odot{g_{j+1}^m}))\eta_3(1\otimes(f_1^i\shuffle{g_1^j})),
 \]
 which for $c,d\in\mathcal{C}_2$ implies by \eqref{eq:GeneralizedShuffle} that 
 \begin{multline*}
  \eta_3(c\otimes{f})\eta_3(d\otimes{g}) = \tilde{\eta}_2(c)\eta_3(1\otimes{f})\tilde{\eta}_2(d)\eta_3(1\otimes{g})\\
  = \tilde{\eta}_2(c{\cdot}d)\eta_3((1\otimes{f})\cdot(1\otimes{g}))
  = \eta_3((c\otimes{f})\cdot(d\otimes{g})).
 \end{multline*}
 Following the definition of $\mathrm{IDR}(\mathcal{R})$ as tensor product of the $\mathcal{C}$-algebras $\mathcal{R}$, $\mathcal{C}_1$, and $\mathcal{C}_2\otimes\mathcal{T}$, we define the $\mathcal{C}$-algebra homomorphism $\eta:\mathrm{IDR}(\mathcal{R})\to\mathcal{S}$ by
 \[
  \eta:=\varphi\otimes\eta_1\otimes\eta_3.
 \]
 Obviously, we have that $\eta(\iota(f))=\varphi(f)\eta_1(1)\eta_3(1\otimes\et)=\varphi(f)$ for all $f\in\mathcal{R}$.
 It remains to show that $\eta$ is even an integro-differential ring homomorphism.
 Since $\bar{\mathcal{C}}$ defined by \eqref{eq:Cbar} is a subring of $\mathrm{IDR}(\mathcal{R})$, we can consider $\mathcal{S}$ as a $\bar{\mathcal{C}}$-algebra in a canonical way via $\eta$.
 Note that $\eta$ maps $\bar{\mathcal{C}}$ into $\const_\Der(\mathcal{S})$ by definition of $\eta_1$ and $\eta_2$.
 Hence, $\Der$ and $\Int$ are not only $\bar{\mathcal{C}}$-linear on $\mathrm{IDR}(\mathcal{R})$ but also on $\mathcal{S}$.
 Therefore, it suffices to show that, for all $f_0\in\mathcal{R}$, $n\in\mathbb{N}$, and pure tensors $f\in\mathcal{R}_\T^{\otimes{n}}$, we have
 \begin{align*}
  \Der\eta(f_0\otimes1\otimes1\otimes{f})&=\eta(\Der(f_0\otimes1\otimes1\otimes{f})) \quad\text{and}\\
  \Int\eta(f_0\otimes1\otimes1\otimes{f})&=\eta(\Int(f_0\otimes1\otimes1\otimes{f})).
 \end{align*}
 First, if $n=0$, we can assume $f=\et$ w.l.o.g.\ and obtain $\Der\eta(\iota(f_0)) = \Der\varphi(f_0) = \varphi(\Der{f_0}) = \eta(\iota(\Der{f_0})) = \eta(\Der\iota(f_0))$ by \eqref{eq:DefDeriv} and definition of $\varphi$ and $\eta$.
 If $n>0$, by the Leibniz rule in $\mathcal{S}$, we have
 \begin{align*}
  \Der\eta(f_0\otimes1\otimes1\otimes{f}) &= \Der\varphi(f_0)\Int\varphi(f_1)\eta_3(1\otimes{f_2^n})\\
  &= (\Der\varphi(f_0))\Int\varphi(f_1)\eta_3(1\otimes{f_2^n})+\varphi(f_0)\varphi(f_1)\eta_3(1\otimes{f_2^n})\\
  &= \varphi(\Der{f_0})\eta_3(1\otimes{f})+\varphi(f_0f_1)\eta_3(1\otimes{f_2^n})\\
  &= \eta((\Der{f_0})\otimes1\otimes1\otimes{f})+\eta((f_0f_1)\otimes1\otimes1\otimes{f_2^n}).
 \end{align*}
 This is equal to $\eta(\Der(f_0\otimes1\otimes1\otimes{f}))$ by \eqref{eq:DefDeriv}.
 Second, we abbreviate $f_\T := f_0-\Der\Q{f_0} \in \mathcal{R}_\T$ and we proceed by induction on $n$.
 For $n=0$, we can assume $f=\et$ w.l.o.g.\ and we have
 \begin{multline*}
  \Int\eta(\iota(f_0)) = \Int\varphi(f_0)
  = \Int\Der\varphi(\Q{f_0})+\Int\varphi(f_\T)
  = \varphi(\Q{f_0})-\E\varphi(\Q{f_0})+\Int\varphi(f_\T)\\
  = \eta(\iota(\Q{f_0}))-\eta(1\otimes\ep(\Q{f_0})\otimes1\otimes\et)+\eta(1\otimes1\otimes1\otimes{f_\T})
  = \eta(\Int\iota(f_0))
 \end{multline*}
 by \eqref{eq:EvaluationDef} in $\mathcal{S}$ and by \eqref{eq:DefInt}.
 For $n\ge1$, on the one hand, we compute 
 \begin{align*}
  \Int\eta(f_0\otimes1\otimes1\otimes{f}) &= \Int\varphi(f_0)\Int\varphi(f_1)\eta_3(1\otimes{f_2^n})\\
  &= \Int(\Der\varphi(\Q{f_0}))\Int\varphi(f_1)\eta_3(1\otimes{f_2^n}) + \Int\varphi(f_\T)\Int\varphi(f_1)\eta_3(1\otimes{f_2^n})\\
  &= \varphi(\Q{f_0})\Int\varphi(f_1)\eta_3(1\otimes{f_2^n}) - \E\varphi(\Q{f_0})\Int\varphi(f_1)\eta_3(1\otimes{f_2^n})\\
  &\quad - \Int\varphi(\Q{f_0})\varphi(f_1)\eta_3(1\otimes{f_2^n}) + \Int\varphi(f_\T)\Int\varphi(f_1)\eta_3(1\otimes{f_2^n})
 \end{align*}
 by \eqref{eq:IBP} in $\mathcal{S}$ and, on the other hand, \eqref{eq:DefInt} yields
 \begin{align*}
  \eta(\Int(f_0\otimes1\otimes1\otimes{f})) &= \eta((\Q{f_0})\otimes1\otimes1\otimes{f}) - \eta(1\otimes\ep((\Q{f_0})\otimes{f})\otimes1\otimes\et)\\
  &\quad - \eta(\Int(((\Q{f_0})f_1)\otimes1\otimes1\otimes{f_2^n})) + \eta(1\otimes1\otimes1\otimes(f_\T\otimes{f}))\\
  &= \varphi(\Q{f_0})\Int\varphi(f_1)\eta_3(1\otimes{f_2^n}) - \eta_1(\ep((\Q{f_0})\otimes{f}))\\
  &\quad - \eta(\Int(((\Q{f_0})f_1)\otimes1\otimes1\otimes{f_2^n})) + \Int\varphi(f_\T)\Int\varphi(f_1)\eta_3(1\otimes{f_2^n}).
 \end{align*}
 For completing the induction, we observe that these two elements of $\mathcal{S}$ are equal, since we obtain $\Int\varphi(\Q{f_0})\varphi(f_1)\eta_3(1\otimes{f_2^n}) = \eta(\Int(((\Q{f_0})f_1)\otimes1\otimes1\otimes{f_2^n}))$ from the induction hypothesis.
\par
 Finally, to show uniqueness, assume $\eta:\mathrm{IDR}(\mathcal{R})\to\mathcal{S}$ is an arbitrary integro-differential ring homomorphism such that $\eta(\iota(f))=\varphi(f)$ for all $f\in\mathcal{R}$.
 Let $n\in\mathbb{N}^+$ and let $f\in\mathcal{R}_\T^{\otimes{n}}$ be a pure tensor, then by \eqref{eq:DefInt} and $\Q{f_1}=0$ we obtain
 \[
  \eta(1\otimes1\otimes1\otimes{f}) = \eta(\Int(\iota(f_1)\cdot(1\otimes1\otimes1\otimes{f_2^n}))) = \Int\varphi(f_1)\eta(1\otimes1\otimes1\otimes{f_2^n}).
 \]
 Then, we inductively obtain $\eta(1\otimes1\otimes1\otimes{f})=\Int\varphi(f_1)\dots\Int\varphi(f_n)$, using $\eta(\iota(1))=1$ as base case.
 Let $f_0\in\mathcal{R}_\mathrm{J}$, then by \eqref{eq:Evaluation} we have
 \[
  \eta(1\otimes\ep(f_0\otimes{f})\otimes1\otimes\et) = \eta(\E(\iota(f_0)\cdot(1\otimes1\otimes1\otimes{f}))) = \E\varphi(f_0)\Int\varphi(f_1)\dots\Int\varphi(f_n).
 \]
 Let $m\in\mathbb{N}^+$ and let $g\in\mathcal{R}_\T^{\otimes{m}}$ be a pure tensor, then by \eqref{eq:GeneralizedShuffle} and \eqref{eq:Evaluation} we have
 \begin{multline*}
  \eta(1\otimes1\otimes\ep(f\odot{g})\otimes\et) = \eta(\E((1\otimes1\otimes1\otimes{f})\cdot(1\otimes1\otimes1\otimes{g})))\\
  = \E\left(\Int\varphi(f_1)\dots\Int\varphi(f_n)\right)\Int\varphi(g_1)\dots\Int\varphi(g_m).
 \end{multline*}
 Since $\mathrm{IDR}(\mathcal{R})$ is generated as $\mathcal{C}$-algebra by $\iota(\mathcal{R})$ and elements of the above forms, the above identities show that $\eta$ is unique.
\end{proof}

\begin{remark}\label{rem:closure1}
The image $\eta(\mathrm{IDR}(\mathcal{R})) \subseteq \mathcal{S}$ of the above integro-differential ring homomorphism given by the universal property of $\mathrm{IDR}(\mathcal{R})$ agrees with the internal integro-differential ring closure of $\varphi(\mathcal{R})$ in $(\mathcal{S},\Der,\Int)$.
This is straightforward because, for any integro-differential subring $\widetilde{\mathcal{R}}$ of $(\mathcal{S},\Der,\Int)$ that contains $\varphi(\mathcal{R})$, the universal property yields an integro-differential ring homomorphism $\tilde{\eta}:\mathrm{IDR}(\mathcal{R})\to\widetilde{\mathcal{R}}$ and, by uniqueness of $\eta$, we have $\eta(\mathrm{IDR}(\mathcal{R})) = \tilde{\eta}(\mathrm{IDR}(\mathcal{R})) \subseteq \widetilde{\mathcal{R}}$.
Trivially, the homomorphism theorem allows to identify this internal closure $\eta(\mathrm{IDR}(\mathcal{R})) \subseteq \mathcal{S}$ with the quotient $\mathrm{IDR}(\mathcal{R})/\ker(\eta)$.
For the special case that $(\mathcal{S},\Der)$ is a differential ring extension of $(\mathcal{R},\Der)$ and $\varphi:=\id$ is the inclusion map of $\mathcal{R}$ into $\mathcal{S}$, we will analyze the quotient of $\mathrm{IDR}(\mathcal{R})$ in more detail in Section~\ref{sec:closure}.
\end{remark}

\subsection{Basic examples}
\label{sec:examples}

In the following, we illustrate and discuss further properties in simple special cases of the free commutative integro-differential ring constructed above.
To this end, we assume that $\mathcal{R}_\T$ is a cyclic $\mathcal{C}$-module and $x_1 \in \mathcal{R}_\T$ is such that $\mathcal{R}_\T=\mathcal{C}x_1$.
Then, we have $\mathcal{R}_\T^{\otimes{n}}=\mathcal{C}x_n$ with
\[
 x_n:=x_1\otimes\ldots\otimes{x_1}
\]
for all $n\in\mathbb{N}$, including the special case $x_0=\et$.
Consequently, the shuffle product satisfies $x_n\shuffle{x_m}=\binom{n+m}{n}x_{n+m}$.

If $\mathcal{R}_\T$ is freely generated by $x_1$, i.e.\ has rank $1$, then $\mathrm{IDR}(\mathcal{R})$ is freely generated as $\bar{\mathcal{C}}\iota(\mathcal{R})$-module by $\iota(1)=1\otimes1\otimes1\otimes{x_0}$ and all nested integrals $\Int\iota(x_1){\cdot}\dots{\cdot}\Int\iota(x_1)=1\otimes1\otimes1\otimes{x_n}$.
In particular, we have the following examples.

\begin{example}\label{ex:FreeConstant}
 Starting with the commutative differential ring $(\mathcal{R},\Der)=(\mathcal{C},0)$ having trivial derivation, the only quasi-integration is $\Q=0$.
 In particular, we have $\mathcal{R}_\mathrm{J}=\{0\}$ and $\mathcal{R}_\T=\mathcal{C}$ is cyclic with generator $x_1=1$.
 Then, $\mathrm{IDR}(\mathcal{R})$ is generated as $\bar{\mathcal{C}}$-module by $\{1\otimes1\otimes1\otimes{x_n}\ |\ n\in\mathbb{N}\}$.
 In fact, it is the free $\bar{\mathcal{C}}$-module generated by $\iota(1)$ and all nested integrals $\Int\iota(1){\cdot}\dots{\cdot}\Int\iota(1)$.
\end{example}

\begin{example}\label{ex:FreeLaurent}
 Starting with the ring of formal Laurent series $(\mathcal{R},\Der)=(\mathcal{C}((x)),\frac{d}{dx})$ where $\mathbb{Q}\subseteq\mathcal{C}$, it is natural to choose the quasi-integration $\Q$ such that $\mathcal{R}_\mathrm{J}$ consists of all series without constant term and $\mathcal{R}_\T=\mathcal{C}\frac{1}{x}$.
 Then, with $x_1=\frac{1}{x}$, $\mathrm{IDR}(\mathcal{R})$ is generated as $\bar{\mathcal{C}}$-module by $\{f\otimes1\otimes1\otimes{x_n}\ |\ f\in\mathcal{R},n\in\mathbb{N}\}$.
 In fact, it is the free $\mathcal{R}\otimes\mathcal{C}_1\otimes\mathcal{C}_2\otimes\mathcal{C}\et$-module generated by $\iota(1)$ and all nested integrals $\Int\iota(\frac{1}{x}){\cdot}\dots{\cdot}\Int\iota(\frac{1}{x})$.
 Recall the integro-differential ring $(\mathcal{S},\Der,\Int):=(\mathcal{C}((x))[\ln(x)],\frac{d}{dx},\Int)$ from Example~\ref{ex:IDR}, which contains $(\mathcal{R},\Der)$ as a differential subring.
 As a $\bar{\mathcal{C}}$-module, $\mathrm{IDR}(\mathcal{R})$ is isomorphic to $\bar{\mathcal{C}}\otimes\mathcal{S}$ via the straightforward module isomorphism given by $\mu(f\otimes{c}\otimes{x_n})=c\otimes{f\frac{\ln(x)^n}{n!}}$ for $f\in\mathcal{R}$ and $c \in \bar{\mathcal{C}}$.
 While $\mu$ satisfies $\mu(\iota(f))=f$ for all $f\in\mathcal{R}$ and even commutes with $\Der$, it is not a ring homomorphism, since it does not respect the product \eqref{eq:GeneralizedShuffle} on $\mathcal{C}_2\otimes\mathcal{T}$.
 In contrast, the unique integro-differential ring homomorphism $\eta:\mathrm{IDR}(\mathcal{R})\to\bar{\mathcal{C}}\otimes\mathcal{S}$ s.t.\ $\eta(\iota(f))=f$ for all $f\in\mathcal{R}$ from Theorem~\ref{thm:universal} is not injective, since we have e.g.\ $\eta(1\otimes\ep(x\otimes{x_1})\otimes1\otimes\et) = \eta(\E{x\Int\iota(\frac{1}{x})}) = \E{x\Int\frac{1}{x}} = \E{x\ln(x)} = 0$ by \eqref{eq:LaurentEval}.
 In Section~\ref{sec:closure}, we will analyze the kernel of such homomorphisms and determine it for this concrete setting in Example~\ref{ex:LaurentClosure}.
\end{example}

On the other hand, $\mathcal{R}_\T$ need not be a free $\mathcal{C}$-module of rank $1$.

\begin{example}\label{ex:FreeSurjective}
 Starting with a commutative differential ring that is already an integro-differential ring $(\mathcal{R},\Der,\Int)$, it is natural to choose $\Q:=\Int$ such that $\mathcal{R}_\mathrm{J}=\Int\mathcal{R}$ and $\mathcal{R}_\T$ is the trivial module, i.e.\ cyclic with generator $x_1=0$.
 Then, $\mathcal{T}=\mathcal{C}\et$ implies that $\mathcal{C}_1=\Sym(\ep(\mathcal{R}_\mathrm{J}))$ and $\mathcal{C}_2=\mathcal{C}$.
 So, $\mathrm{IDR}(\mathcal{R})=\mathcal{R}\otimes\mathcal{C}_1\otimes\mathcal{C}\otimes\mathcal{C}\et$ is just $\mathcal{R}$ extended by new constants $\Sym(\ep(\mathcal{R}_\mathrm{J}))$.
 Note that \eqref{eq:DefInt} implies $\Int\iota(f)=\iota(\Int{f})-1\otimes\ep(\Int{f})\otimes1\otimes\et$ for the integration on $\mathrm{IDR}(\mathcal{R})$.
\end{example}

Finally, we take a closer look at the relations \eqref{eq:ConstRel} of constants arising from cyclic $\mathcal{R}_\T$.
For $n,m\ge1$, we abbreviate the constants
\[
 c_{n,m}:=\ep(x_n\odot{x_m}).
\]
Setting $f=x_n$, $g=x_m$, and $h=x_l$ in \eqref{eq:ConstRel}, we immediately obtain that
\begin{multline}\label{eq:ConstRel1}
 \binom{n+m}{m}c_{n+m,l}-\binom{m+l}{m}c_{n,m+l}\,+\\
 +\sum_{j=0}^{m-1}\Bigg(\sum_{i=\max(0,1-j)}^{n-1}\binom{i+j}{j}c_{n-i,m-j}c_{i+j,l}-\sum_{k=\max(0,1-j)}^{l-1}\binom{j+k}{j}c_{n,j+k}c_{m-j,l-k}\!\Bigg)
\end{multline}
is zero for $n,m,l\ge1$.
Using $f=x_1$, $g=x_{m-1}$, and $h=x_n$ with $m\ge2$ instead, we can express \eqref{eq:ConstRel} as
\begin{equation}\label{eq:ConstRel2}
 mc_{m,n}-\binom{m+n-1}{m-1}c_{1,m+n-1}-\sum_{j=0}^{m-2}\sum_{k=1}^{n-1}\binom{j+k}{j}c_{1,j+k}c_{m-j-1,n-k}.
\end{equation}
In \cite{RaabRegensburger}, we used the relations \eqref{eq:ConstRel2} to show that all $c_{m,n}$ can be expressed in terms of all $c_{1,n}$ if $\mathbb{Q}\subseteq\mathcal{R}$.
In Section~\ref{sec:constants}, we will generalize this to arbitrary $\mathcal{R}_\mathrm{T}$ in Corollary~\ref{cor:gensC2}.
Working in the polynomial ring $\mathbb{Q}[c_{1,1},c_{1,2},c_{2,1},c_{1,3},\dots,c_{d-1,1}]$ generated by symbols $c_{m,n}$ with $m,n\ge1$ and $m+n \le d$, let $G_d$ be the set consisting of the polynomials \eqref{eq:ConstRel2} for $2 \le m \le n \le d-m$ together with $c_{n,m}-c_{m,n}$ for $1 \le m < n \le d-m$.
This set straightforwardly forms a lexicographic Gr\"obner basis for the ideal $(G_d)$ it generates, since the leading monomials of elements in $G_d$ are pairwise distinct variables $c_{m,n}$.
Furthermore, it can be checked for fixed $d$ by computer algebra software whether $(G_d)$ agrees with the ideal generated by all polynomials \eqref{eq:ConstRel1} with $n+m+l \le d$.
We verified that this is the case for $d=60$, which consumed about 121 days of CPU-time in addition to about 3 days of CPU-time needed for $d=50$.
This leads to the conjecture that it is the case for all $d\ge3$.
More generally, we obtain Conjecture~\ref{conj:freeC2} later.

\subsection{Relation to the shuffle algebra}
\label{sec:shufflealgebra}

The goal of this section is to obtain an algebra basis of $\mathrm{IDR}(\mathcal{R})$ based on an algebra basis of the shuffle algebra $\mathcal{T}$, see Corollary~\ref{cor:basis}.
This can be achieved by exploiting that the multiplication \eqref{eq:GeneralizedShuffle} defined on $\mathcal{C}_2\otimes\mathcal{T}$ is closely related to the shuffle product \eqref{eq:shufflerec} on $\mathcal{T}$.

By the definition of $\mathrm{IDR}(\mathcal{R})$, the module structure is straightforward, since the $\mathcal{C}$-module $\mathcal{T}$ is isomorphic to the $\mathcal{C}$-submodule $\mathcal{C}\otimes\mathcal{C}\otimes\mathcal{C}\otimes\mathcal{T}$ of $\mathrm{IDR}(\mathcal{R})$.
Hence, using the canonical isomorphism between these two modules, any set $B\subseteq\mathcal{T}$ that generates $\mathcal{T}$ as a $\mathcal{C}$-module also generates $\mathrm{IDR}(\mathcal{R})$ as a $\mathcal{R}\bar{\mathcal{C}}$-module and, if $B$ additionally is $\mathcal{C}$-linearly independent, then it is also $\mathcal{R}\bar{\mathcal{C}}$-linearly independent.
Note the change of ground ring from $\mathcal{C}$ to $\mathcal{R}\bar{\mathcal{C}}$.
The algebra structure is not so straightforward, since considering $\mathcal{T}$ canonically as a $\mathcal{C}$-submodule of $\mathrm{IDR}(\mathcal{R})$ as above does not yield a $\mathcal{C}$-subalgebra as it is not closed under multiplication in $\mathrm{IDR}(\mathcal{R})$.
However, the multiplication \eqref{eq:GeneralizedShuffle} is closely related to the shuffle product, under which $\mathcal{T}$ is closed.
Corollary~\ref{cor:basis} shows under certain general conditions on $\mathcal{R}$ and $B$ that the corresponding set $\{1\otimes1\otimes1\otimes{b}\ |\ b\in{B}\}$ indeed generates $\mathrm{IDR}(\mathcal{R})$ freely as a $\mathcal{R}\bar{\mathcal{C}}$-algebra.
For example, if $\mathbb{Q}\subseteq\mathcal{C}$ and if $\mathcal{R}_\T$ is a free module of rank $1$ generated by some $x_1$ as in Examples \ref{ex:FreeConstant} and \ref{ex:FreeLaurent}, then $\{x_1\}$ trivially is a $\mathcal{C}$-algebra basis of $(\mathcal{T},+,\shuffle)$.
Moreover, we have that $\{1\otimes1\otimes1\otimes{x_1}\}$ is a $\mathcal{R}\bar{\mathcal{C}}$-algebra basis of $\mathrm{IDR}(\mathcal{R})$, cf.~\cite[Thm.~3.2]{RaabRegensburger}.

For the formal treatment in the general case, we exploit the graded structure of the shuffle algebra given by the submodules $\mathcal{R}_\T^{\otimes{n}}$.
We abbreviate for every $n\in\mathbb{N}$ the submodule $\mathcal{T}_n:=\bigoplus_{i=0}^n\mathcal{R}_\T^{\otimes{i}}$ of $\mathcal{T}$ and we say that $f\in\mathcal{T}$ has degree $\deg(f)=n$ if $n$ is minimal such that $f\in\mathcal{T}_n$.
We also extend the shuffle product $\shuffle$ to $\mathcal{C}_2\otimes\mathcal{T}$ by tensoring it with the multiplication on $\mathcal{C}_2$.
\begin{equation}\label{eq:extendedshuffle}
 (c\otimes{f})\shuffle(d\otimes{g}):=(c{\cdot}d)\otimes(f\shuffle{g})
\end{equation}
Note that this yields a commutative multiplication, in contrast to the analogous definition of $\circledast$ in \eqref{eq:tensormultiplication}.
In fact, this is the shuffle product on the $\mathcal{C}_2$-tensor algebra $T(\mathcal{C}_2\otimes\mathcal{R}_\T)$.
Furthermore, let $\pi_n:\mathcal{C}_2\otimes\mathcal{T}\to\mathcal{C}_2\otimes\mathcal{R}_\T^{\otimes{n}}$ be the canonical projection.
The straightforward observation formulated in the next lemma is key to what follows, so we prove it explicitly.

\begin{lemma}\label{lem:leadingtensor}
 Let $n,m\in\mathbb{N}$.
 Let $f\in\mathcal{C}_2\otimes\mathcal{R}_\T^{\otimes{n}}$ and $g\in\mathcal{C}_2\otimes\mathcal{R}_\T^{\otimes{m}}$, then $f\cdot{g}\in\mathcal{C}_2\otimes\mathcal{T}_{n+m}$ and
 \[
  \pi_{n+m}(f\cdot{g})=f\shuffle{g}.
 \]
 Moreover, for $f\in\mathcal{C}_2\otimes\mathcal{T}_n$ and $g\in\mathcal{C}_2\otimes\mathcal{T}_m$, we have $f\cdot{g},f\shuffle{g}\in\mathcal{C}_2\otimes\mathcal{T}_{n+m}$ as well as $\pi_{n+m}(f\cdot{g})=\pi_{n+m}(\pi_n(f)\cdot\pi_m(g))$ and $\pi_{n+m}(f\shuffle{g})=\pi_{n+m}(\pi_n(f)\shuffle\pi_m(g))$.
\end{lemma}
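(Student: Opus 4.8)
The plan is to read the statement off directly from the explicit multiplication formula \eqref{eq:GeneralizedShuffle}, exploiting that its top-degree contribution is exactly a shuffle product while every other contribution has strictly smaller tensor degree. First I would settle the pure-tensor case: take $f=c\otimes\tilde{f}$ and $g=d\otimes\tilde{g}$ with $c,d\in\mathcal{C}_2$, $\tilde{f}\in\mathcal{R}_\T^{\otimes{n}}$, and $\tilde{g}\in\mathcal{R}_\T^{\otimes{m}}$. By \eqref{eq:GeneralizedShuffle}, the product $f\cdot{g}$ is $(c{\cdot}d)\otimes(\tilde{f}\shuffle\tilde{g})$ together with a double sum whose $(i,j)$-term lies in $\mathcal{C}_2\otimes\mathcal{R}_\T^{\otimes(i+j)}$, where $0\le{i}\le{n-1}$ and $0\le{j}\le{m-1}$. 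Since the shuffle product is graded, the leading term lies in $\mathcal{C}_2\otimes\mathcal{R}_\T^{\otimes(n+m)}$, while every correction term sits in degree $i+j\le{n+m-2}$. Hence $f\cdot{g}\in\mathcal{C}_2\otimes\mathcal{T}_{n+m}$, and applying $\pi_{n+m}$ annihilates the whole double sum and returns $(c{\cdot}d)\otimes(\tilde{f}\shuffle\tilde{g})$, which is $f\shuffle{g}$ by \eqref{eq:extendedshuffle}.

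Since $\cdot$ and $\shuffle$ are $\mathcal{C}$-bilinear and $\pi_{n+m}$ is $\mathcal{C}$-linear, I would then extend the identity $\pi_{n+m}(f\cdot{g})=f\shuffle{g}$ to arbitrary $f\in\mathcal{C}_2\otimes\mathcal{R}_\T^{\otimes{n}}$ and $g\in\mathcal{C}_2\otimes\mathcal{R}_\T^{\otimes{m}}$ by writing each as a finite sum of pure tensors. This disposes of the first assertion.

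For the second part, I would decompose into homogeneous components: write $f=\sum_{i=0}^{n}\pi_i(f)$ and $g=\sum_{j=0}^{m}\pi_j(g)$ with $\pi_i(f)\in\mathcal{C}_2\otimes\mathcal{R}_\T^{\otimes{i}}$ and $\pi_j(g)\in\mathcal{C}_2\otimes\mathcal{R}_\T^{\otimes{j}}$. Applying the first part to each pair shows $\pi_i(f)\cdot\pi_j(g)\in\mathcal{C}_2\otimes\mathcal{T}_{i+j}$ and $\pi_i(f)\shuffle\pi_j(g)\in\mathcal{C}_2\otimes\mathcal{R}_\T^{\otimes(i+j)}$, so summing over $i\le{n}$ and $j\le{m}$ gives $f\cdot{g},f\shuffle{g}\in\mathcal{C}_2\otimes\mathcal{T}_{n+m}$. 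For the projection identities, I would note that $\pi_i(f)\cdot\pi_j(g)$ has no degree-$(n+m)$ part unless $i+j=n+m$, which forces $i=n$ and $j=m$; the identical grading argument applies to $\pi_i(f)\shuffle\pi_j(g)$. Hence only the $(n,m)$-term survives under $\pi_{n+m}$, yielding $\pi_{n+m}(f\cdot{g})=\pi_{n+m}(\pi_n(f)\cdot\pi_m(g))$ and the analogous statement for $\shuffle$.

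I do not expect a genuine obstacle here: the one point demanding care is the degree bookkeeping in \eqref{eq:GeneralizedShuffle}, namely that the summation ranges $0\le{i}\le{n-1}$ and $0\le{j}\le{m-1}$ confine all correction terms strictly below the top degree $n+m$. This is precisely what makes $\pi_{n+m}$ single out the shuffle part; the remainder is formal multilinearity combined with the grading of the shuffle product.
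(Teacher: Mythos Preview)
Your proof is correct and follows essentially the same approach as the paper's own proof: read off the degree bounds directly from \eqref{eq:GeneralizedShuffle} and \eqref{eq:extendedshuffle}, noting that the correction terms lie in strictly lower degree, then use bilinearity and the grading to handle the general case. The paper states the argument more tersely, but the content is identical.
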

\begin{proof}
 Since the shuffle product of an element of $\mathcal{R}_\T^{\otimes{n}}$ and an element of $\mathcal{R}_\T^{\otimes{m}}$ lies in $\mathcal{R}_\T^{\otimes(n+m)}$, the first statement follows immediately from definitions \eqref{eq:GeneralizedShuffle} and \eqref{eq:extendedshuffle}.
 For the same reason, $f\cdot{g},f\shuffle{g}\in\mathcal{C}_2\otimes\mathcal{T}_{n+m}$ holds for $f\in\mathcal{C}_2\otimes\mathcal{T}_n$ and $g\in\mathcal{C}_2\otimes\mathcal{T}_m$.
 Based on this, the remaining equations follow trivially by linearity of $\pi_{n+m}$ and distributivity of $\cdot$ and $\shuffle$.
\end{proof}

From Lemma~\ref{lem:leadingtensor}, it follows immediately that $(\mathcal{C}_2\otimes\mathcal{T}_n)_{n\in\mathbb{N}}$ turns $(\mathcal{C}_2\otimes\mathcal{T},+,\cdot)$ into a filtered $\mathcal{C}_2$-algebra and that its associated graded $\mathcal{C}_2$-algebra is $(\mathcal{C}_2\otimes\mathcal{T},+,\shuffle)$, if the quotient module $(\mathcal{C}_2\otimes\mathcal{T}_{n+1})/(\mathcal{C}_2\otimes\mathcal{T}_n)$ is canonically identified with $\pi_{n+1}(\mathcal{C}_2\otimes\mathcal{T}_{n+1})$.

The properties given in Theorem~\ref{thm:associatedgraded} hold for commutative filtered $\mathcal{K}$-algebras in general.
Since we did not find a proof in the literature, we include it here explicitly.
Related results are stated as Corollary~7.6.14 in \cite{McConnellRobson} and as Remark~5.1.37 in \cite{Rowen}.
\begin{theorem}\label{thm:associatedgraded}
 Let $\mathcal{K}$ be a commutative ring, let $\mathcal{A}$ be a commutative filtered $\mathcal{K}$-algebra with (ascending) filtration $(F_n)_{n\in\mathbb{N}}$ and let $\mathcal{G}$ be its associated graded $\mathcal{K}$-algebra.
 On each $F_n$, denote by $\pi_n$ the canonical $\mathcal{K}$-linear map to the corresponding homogeneous component of $\mathcal{G}$.
 Let the map $\pi:\mathcal{A}\to\mathcal{G}$ be defined by $\pi(a):=\pi_n(a)$ with $n=\deg(a)$ being minimal such that $a \in F_n$.
 For any subset $B\subseteq\mathcal{A}$, the following hold.
 \begin{enumerate}
  \item If $\{\pi(b)\ |\ b\in{B}\}$ is a generating set of the $\mathcal{K}$-algebra $\mathcal{G}$, then $B$ is a generating set of the $\mathcal{K}$-algebra $\mathcal{A}$.
  \item If $\{\pi(b)\ |\ b\in{B}\}$ is algebraically independent in $\mathcal{G}$ over $\mathcal{K}$, then $B$ is algebraically independent in $\mathcal{A}$ over $\mathcal{K}$.
 \end{enumerate}
\end{theorem}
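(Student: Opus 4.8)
The plan is to prove both parts by the same underlying mechanism: transferring information from the associated graded algebra $\mathcal{G}$ back to the filtered algebra $\mathcal{A}$ by induction on degree. The key feature to exploit is the compatibility of $\pi$ with multiplication in the leading-degree sense. More precisely, if $a \in F_n$ and $a' \in F_m$ with $\deg(a) = n$ and $\deg(a') = m$, then $aa' \in F_{n+m}$ and the image $\pi_{n+m}(aa')$ equals the product $\pi(a)\pi(a')$ in $\mathcal{G}$ (this is the defining property of the associated graded multiplication). The crucial caveat is that $aa'$ may have degree strictly less than $n+m$, which happens exactly when $\pi(a)\pi(a') = 0$ in $\mathcal{G}$; I will need to handle such degree drops carefully throughout.

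For part (1), I would show every $a \in \mathcal{A}$ lies in the subalgebra generated by $B$, proceeding by induction on $\deg(a)$. The base case $\deg(a) = 0$ covers the scalars $F_0 = \mathcal{K}$, which are generated trivially. For the inductive step, given $a$ with $\deg(a) = n$, its leading term $\pi(a) = \pi_n(a)$ lies in the homogeneous component $\mathcal{G}_n$. By hypothesis $\{\pi(b) \mid b \in B\}$ generates $\mathcal{G}$, so $\pi(a)$ is a $\mathcal{K}$-polynomial expression $P$ in the elements $\pi(b)$; since $\pi(a)$ is homogeneous of degree $n$, I may take $P$ to be the homogeneous degree-$n$ part of this expression. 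Now form the corresponding polynomial $\tilde{P}$ in the elements $b \in B$ themselves inside $\mathcal{A}$. The element $\tilde{P}$ lies in $F_n$, and by the multiplicativity of $\pi$ noted above, $\pi_n(\tilde{P}) = P(\pi(b)) = \pi_n(a)$. Hence $a - \tilde{P} \in F_n$ has vanishing image under $\pi_n$, which means $a - \tilde{P} \in F_{n-1}$, so $\deg(a - \tilde{P}) < n$. By the induction hypothesis $a - \tilde{P}$ lies in the subalgebra generated by $B$, and therefore so does $a$.

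For part (2), I would prove the contrapositive: suppose $B$ is algebraically dependent in $\mathcal{A}$ over $\mathcal{K}$, and deduce that $\{\pi(b) \mid b \in B\}$ is algebraically dependent in $\mathcal{G}$. Take a nonzero polynomial $P$ over $\mathcal{K}$ with $P(\{b\}) = 0$ in $\mathcal{A}$. Write $P = \sum_\alpha P_\alpha$ where $P_\alpha$ collects monomials assigned a common weighted degree, the weight of each variable $b$ being $\deg(b)$; let $N$ be the top weighted degree occurring. Each monomial of weighted degree $d$ evaluates in $\mathcal{A}$ to an element of $F_d$, so $P(\{b\})$, grouped this way, is a sum of contributions lying in successively higher filtration levels. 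The element $\sum_{\alpha: \text{weight}=N} P_\alpha(\{b\})$ lies in $F_N$, and applying $\pi_N$ and using multiplicativity of $\pi$ gives exactly $P_N(\{\pi(b)\})$, the evaluation in $\mathcal{G}$ of the top weighted-degree part of $P$. Since $P(\{b\}) = 0$ in $\mathcal{A}$ and all lower-weight contributions lie in $F_{N-1}$, taking $\pi_N$ annihilates everything, forcing $P_N(\{\pi(b)\}) = 0$ in $\mathcal{G}$. As $P_N$ is a nonzero homogeneous polynomial over $\mathcal{K}$, this exhibits the desired algebraic dependence.

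I expect the main obstacle to be the bookkeeping around degree drops, i.e.\ the possibility that products of leading terms vanish in $\mathcal{G}$, so that weighted degrees are only upper bounds on the actual filtration level of an evaluated monomial. The homogeneity argument in part (2) handles this cleanly only if one is careful that $\pi_N$ isolates precisely the top weighted-degree part: contributions from strictly smaller weighted degree sit in $F_{N-1}$ and are killed, while within weighted degree $N$ the map $\pi_N$ is a $\mathcal{K}$-algebra homomorphism in the graded sense. The corresponding subtlety in part (1) is ensuring that $P$ can be chosen homogeneous of the correct degree matching $\pi_n(a)$, which is automatic because $\mathcal{G}$ is graded and $\pi_n(a)$ is homogeneous.
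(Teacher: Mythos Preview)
Your proof is correct and follows essentially the same approach as the paper, which packages the argument via the free commutative $\mathcal{K}$-algebra $\mathcal{F}$ on $B$ together with the two evaluation homomorphisms $\varphi_1:\mathcal{F}\to\mathcal{A}$ and $\varphi_2:\mathcal{F}\to\mathcal{G}$, showing that surjectivity (resp.\ injectivity) of $\varphi_2$ forces the same for $\varphi_1$. One small slip: your base case assumes $F_0=\mathcal{K}$, which need not hold for a general filtration, but your inductive step already handles $n=0$ since then $a-\tilde{P}\in F_{-1}=\{0\}$.
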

\begin{proof}
 Let $\mathcal{F}$ be the free commutative $\mathcal{K}$-algebra on $B$.
 We turn $\mathcal{F}$ into a graded $\mathcal{K}$-algebra by assigning to the elements of $B$ the degrees $\deg(b)$ inherited from $\mathcal{A}$.
 Let $\mathcal{K}$-algebra homomorphisms $\varphi_1:\mathcal{F}\to\mathcal{A}$ and $\varphi_2:\mathcal{F}\to\mathcal{G}$ be defined via $\varphi_1(b)=b$ and $\varphi_2(b)=\pi(b)$, respectively.
 Then, both $\varphi_1$ and $\varphi_2$ respect the filtration, i.e.\ $\deg(p) \le n$ implies $\deg(\varphi_i(p)) \le n$.
 Moreover, if $p \in \mathcal{F}$ is homogeneous of degree $n$, we have $\varphi_2(p)=\pi_n(\varphi_1(p))$.
 In particular, $\varphi_2$ is graded, i.e.\ it maps homogeneous elements to homogeneous elements of the same degree or to zero.
 Now, we show that $\varphi_1$ is surjective resp.\ injective whenever $\varphi_2$ is.
 \begin{enumerate}
  \item Let $a\in\mathcal{A}$ be nonzero and let $n:=\deg(a)$.
   Since $\varphi_2$ is surjective, there is $p \in \mathcal{F}$ with $\pi_n(a)=\varphi_2(p)$.
   Since $\varphi_2$ is graded, we can choose $p$ to be homogeneous of degree $n$.
   Hence, we get $\varphi_2(p)=\pi_n(\varphi_1(p))$, which implies that $a-\varphi_1(p)$ is zero or has degree strictly less than $n$.
   Inductively, we obtain $a \in \im(\varphi_1)$.
  \item Let $p \in \mathcal{F}$ be nonzero.
   Let $\tilde{p} \in \mathcal{F}$ be the homogeneous component of $p$ of degree $n:=\deg(p)$.
   Then, we have $\varphi_2(\tilde{p})=\pi_n(\varphi_1(\tilde{p}))$ and $\pi_n(\varphi_1(p-\tilde{p}))=0$, since $p-\tilde{p}$ is zero or has degree strictly less than $n$.
   Since $\varphi_2$ is injective, $\varphi_2(\tilde{p})$ is nonzero.
   Altogether, this implies that $\varphi_1(p)$ is nonzero.\qedhere
 \end{enumerate}
\end{proof}

We also need the following basic fact about linear independence being preserved in tensor products, which we state for later reference.
This property follows immediately from Proposition~4.8.6 in \cite{CohnBasic}, for example.

\begin{lemma}\label{lem:tensorproduct}
 Let $\mathcal{K}$ be a commutative ring, let $N,M$ be $\mathcal{K}$-modules, and let the subset $\{m_i\ |\ i \in I\}\subseteq{M}$ be $\mathcal{K}$-linearly independent such that $\widetilde{M}:=\linspan_\mathcal{K}\{m_i\ |\ i \in I\}$ is complemented in $M$.
 Then, any element of $N\otimes_\mathcal{K}\widetilde{M} \subseteq N\otimes_\mathcal{K}M$ has a unique representation of the form $\sum_{i \in I}n_i\otimes_\mathcal{K}m_i$ with $n_i \in N$.
\end{lemma}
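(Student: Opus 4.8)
The plan is to reduce the statement to the well-known behaviour of tensor products with free modules, so that the only genuine issue becomes the fact that $\widetilde{M}$ sits inside the larger module $M$.

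First I would observe that the hypotheses make $\{m_i \mid i \in I\}$ a basis of $\widetilde{M}$: it spans $\widetilde{M}$ by definition of $\widetilde{M}=\linspan_\mathcal{K}\{m_i \mid i \in I\}$ and it is $\mathcal{K}$-linearly independent by assumption. Thus $\widetilde{M}$ is a free $\mathcal{K}$-module, with $\widetilde{M}\cong\mathcal{K}^{(I)}$ via $m_i\mapsto e_i$. Tensoring with $N$ and using the standard isomorphism $N\otimes_\mathcal{K}\mathcal{K}^{(I)}\cong N^{(I)}$, every element of $N\otimes_\mathcal{K}\widetilde{M}$ acquires a unique expression $\sum_{i\in I}n_i\otimes_\mathcal{K}m_i$ with only finitely many nonzero $n_i\in N$. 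This already yields both existence and uniqueness of the representation \emph{inside} $N\otimes_\mathcal{K}\widetilde{M}$.

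The remaining point, and the one where the complementedness hypothesis is essential, is that passing from $N\otimes_\mathcal{K}\widetilde{M}$ to the ambient module $N\otimes_\mathcal{K}M$ does not destroy uniqueness, i.e.\ that the canonical map $N\otimes_\mathcal{K}\widetilde{M}\to N\otimes_\mathcal{K}M$ induced by the inclusion $\widetilde{M}\hookrightarrow M$ is injective. In general this can fail, since $N\otimes_\mathcal{K}-$ is only right exact. Here I would use that $\widetilde{M}$ is complemented: writing $M=\widetilde{M}\oplus M'$ for some submodule $M'$, the tensor product distributes over direct sums, $N\otimes_\mathcal{K}M\cong(N\otimes_\mathcal{K}\widetilde{M})\oplus(N\otimes_\mathcal{K}M')$, and under this identification the canonical map is precisely the inclusion of the first summand, hence injective. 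Therefore the representation $\sum_{i\in I}n_i\otimes_\mathcal{K}m_i$ of an element of $N\otimes_\mathcal{K}\widetilde{M}\subseteq N\otimes_\mathcal{K}M$ is already unique in $N\otimes_\mathcal{K}M$, as claimed in Lemma~\ref{lem:tensorproduct}.

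The main obstacle is exactly this injectivity: without complementedness, a nonzero element of $N\otimes_\mathcal{K}\widetilde{M}$ could collapse to zero in $N\otimes_\mathcal{K}M$ and uniqueness would break. The direct-sum argument is what cleanly circumvents the non-left-exactness of the tensor functor, and it is the reason the hypothesis is phrased in terms of a complemented submodule rather than merely a $\mathcal{K}$-linearly independent family.
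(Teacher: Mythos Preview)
Your argument is correct: freeness of $\widetilde{M}$ gives existence and uniqueness in $N\otimes_\mathcal{K}\widetilde{M}$, and the direct-sum decomposition $M=\widetilde{M}\oplus M'$ guarantees that the canonical map $N\otimes_\mathcal{K}\widetilde{M}\to N\otimes_\mathcal{K}M$ is injective, so nothing is lost in passing to the ambient module.

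The paper does not actually prove this lemma; it simply remarks that the statement follows immediately from Proposition~4.8.6 in Cohn's \emph{Basic Algebra}. Your write-up therefore supplies what the paper outsources to a reference. The content is the same standard fact, and your two-step decomposition (freeness of $\widetilde{M}$ plus splitting of the inclusion) is exactly the mechanism behind the cited proposition, so there is no genuine methodological divergence---you have just made explicit what the paper leaves to the literature.
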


This allows us to obtain the following result relating $(\mathcal{C}_2\otimes\mathcal{T},+,\cdot)$ to the standard shuffle algebra $(\mathcal{T},+,\shuffle)$.

\begin{theorem}\label{thm:basis}
 Let $B\subseteq\mathcal{T}$ be a set of homogeneous elements.
 \begin{enumerate}
  \item If $B$ is a generating set of the $\mathcal{C}$-algebra $(\mathcal{T},+,\shuffle)$, then $\{1\otimes{b}\ |\ b\in{B}\}$ is a generating set of the $\mathcal{C}_2$-algebra $(\mathcal{C}_2\otimes\mathcal{T},+,\cdot)$.
  \item Assume that the $\mathcal{C}$-module $\mathcal{C}_2$ is semisimple or the $\mathcal{C}$-algebra generated by $B$ has a direct complement in $\mathcal{T}$.
   If $B$ is algebraically independent in $(\mathcal{T},+,\shuffle)$ over $\mathcal{C}$, then $\{1\otimes{b}\ |\ b\in{B}\}$ is algebraically independent in $(\mathcal{C}_2\otimes\mathcal{T},+,\cdot)$ over $\mathcal{C}_2$.
 \end{enumerate}
\end{theorem}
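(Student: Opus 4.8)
Theorem~\ref{thm:basis}, statement as given.

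The plan is to deduce both parts from the filtered-algebra machinery already assembled. The key structural fact, noted right after Lemma~\ref{lem:leadingtensor}, is that $(\mathcal{C}_2\otimes\mathcal{T},+,\cdot)$ is a filtered $\mathcal{C}_2$-algebra with filtration $(\mathcal{C}_2\otimes\mathcal{T}_n)_{n\in\mathbb{N}}$, whose associated graded $\mathcal{C}_2$-algebra is exactly $(\mathcal{C}_2\otimes\mathcal{T},+,\shuffle)$. So both parts want to be instances of Theorem~\ref{thm:associatedgraded} applied with $\mathcal{K}=\mathcal{C}_2$, $\mathcal{A}=(\mathcal{C}_2\otimes\mathcal{T},+,\cdot)$, $\mathcal{G}=(\mathcal{C}_2\otimes\mathcal{T},+,\shuffle)$, and the set $\{1\otimes{b}\mid b\in B\}$. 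Under the canonical identification of the graded pieces, $\pi(1\otimes b)=1\otimes\pi(b)=1\otimes b$ for homogeneous $b$, so the hypotheses of Theorem~\ref{thm:associatedgraded} reduce to statements about $\{1\otimes b\}$ inside the shuffle algebra $(\mathcal{C}_2\otimes\mathcal{T},+,\shuffle)$ over $\mathcal{C}_2$.

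For part~1, I would first invoke Theorem~\ref{thm:associatedgraded}(1): it suffices to show that $\{1\otimes b\mid b\in B\}$ generates the graded algebra $(\mathcal{C}_2\otimes\mathcal{T},+,\shuffle)$ as a $\mathcal{C}_2$-algebra. Since $\shuffle$ on $\mathcal{C}_2\otimes\mathcal{T}$ is, by \eqref{eq:extendedshuffle}, just the shuffle product on the $\mathcal{C}_2$-tensor algebra $T(\mathcal{C}_2\otimes\mathcal{R}_\T)$, and since $B$ generates $(\mathcal{T},+,\shuffle)$ as a $\mathcal{C}$-algebra by hypothesis, the extension of scalars from $\mathcal{C}$ to $\mathcal{C}_2$ gives that $\{1\otimes b\}$ generates $(\mathcal{C}_2\otimes\mathcal{T},+,\shuffle)$ over $\mathcal{C}_2$. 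This is a routine base-change argument: any $1\otimes t$ is a $\mathcal{C}_2$-linear shuffle-combination of the $1\otimes b$ because the corresponding statement holds $\mathcal{C}$-linearly for $t$ in terms of the $b$. Theorem~\ref{thm:associatedgraded}(1) then yields that $\{1\otimes b\}$ generates $(\mathcal{C}_2\otimes\mathcal{T},+,\cdot)$ over $\mathcal{C}_2$.

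For part~2, by Theorem~\ref{thm:associatedgraded}(2) it suffices to show that $\{1\otimes b\mid b\in B\}$ is algebraically independent in the shuffle algebra $(\mathcal{C}_2\otimes\mathcal{T},+,\shuffle)$ over $\mathcal{C}_2$. This is precisely where the extra hypothesis on $\mathcal{C}_2$ enters, and it is the main obstacle: algebraic independence is not automatically preserved under base change for arbitrary rings. Starting from a nontrivial $\mathcal{C}_2$-algebra relation among the $1\otimes b$, I would collect it into an element of $\mathcal{C}_2\otimes_\mathcal{C}\mathcal{A}_B$, where $\mathcal{A}_B\subseteq\mathcal{T}$ denotes the $\mathcal{C}$-subalgebra generated by $B$; algebraic independence of $B$ over $\mathcal{C}$ means $\mathcal{A}_B$ is a free $\mathcal{C}$-algebra on $B$, so the shuffle-monomials in $B$ form a $\mathcal{C}$-basis of $\mathcal{A}_B$. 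I then want to apply Lemma~\ref{lem:tensorproduct} with $\mathcal{K}=\mathcal{C}$, $N=\mathcal{C}_2$, $M=\mathcal{T}$, and $\{m_i\}$ the shuffle-monomials in $B$, to conclude that a vanishing $\mathcal{C}_2$-combination forces all coefficients to vanish. The hypothesis of Lemma~\ref{lem:tensorproduct} is that $\mathcal{A}_B=\widetilde M$ is complemented in $\mathcal{T}$ as a $\mathcal{C}$-module; this is exactly the second alternative ``the $\mathcal{C}$-algebra generated by $B$ has a direct complement in $\mathcal{T}$''. The first alternative, $\mathcal{C}_2$ semisimple as a $\mathcal{C}$-module, handles the case where $\mathcal{A}_B$ need not be complemented: when $N=\mathcal{C}_2$ is semisimple (hence flat) over $\mathcal{C}$, the inclusion $\mathcal{A}_B\hookrightarrow\mathcal{T}$ stays injective after tensoring with $\mathcal{C}_2$, so $\mathcal{C}_2\otimes\mathcal{A}_B\hookrightarrow\mathcal{C}_2\otimes\mathcal{T}$ and the freeness of the $\mathcal{C}_2$-basis $\{1\otimes(\text{shuffle-monomial})\}$ of $\mathcal{C}_2\otimes\mathcal{A}_B$ gives the desired independence directly. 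In either case the coefficients of the relation vanish, proving algebraic independence over $\mathcal{C}_2$, and Theorem~\ref{thm:associatedgraded}(2) transports this to $(\mathcal{C}_2\otimes\mathcal{T},+,\cdot)$. The delicate point throughout is keeping the two ground rings $\mathcal{C}$ and $\mathcal{C}_2$ straight and verifying that the flatness/complementedness dichotomy is exactly what makes the base change injective.
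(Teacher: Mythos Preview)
Your overall strategy matches the paper's exactly: both parts are reduced to the graded shuffle algebra via Theorem~\ref{thm:associatedgraded}, and for the complemented alternative in part~2 you invoke Lemma~\ref{lem:tensorproduct} just as the paper does. Part~1 and the second alternative of part~2 are fine.

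The genuine gap is in your handling of the semisimple alternative. The claim ``semisimple (hence flat)'' is false: over $\mathcal{C}=\mathbb{Z}$, the module $\mathbb{Z}/p\mathbb{Z}$ is simple (so semisimple) but not flat, since tensoring the injection $\mathbb{Z}\xrightarrow{p}\mathbb{Z}$ yields the zero map. So you cannot conclude that $\mathcal{C}_2\otimes\mathcal{A}_B\to\mathcal{C}_2\otimes\mathcal{T}$ is injective from semisimplicity of $\mathcal{C}_2$ alone, and your argument breaks at exactly this point.

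The paper uses semisimplicity differently. Given a nonzero polynomial $p=\sum_\alpha c_\alpha z^\alpha$ over $\mathcal{C}_2$, it picks $\beta$ with $c_\beta\neq 0$ and uses semisimplicity of $\mathcal{C}_2$ to split off a complement of the \emph{cyclic} submodule $\mathcal{C}c_\beta\subseteq\mathcal{C}_2$. Each $c_\alpha$ is then written as $\tilde{c}_\alpha c_\beta$ plus something in the complement, with $\tilde{c}_\alpha\in\mathcal{C}$ and $\tilde{c}_\beta=1$. The $\mathcal{C}$-polynomial $\sum_\alpha\tilde{c}_\alpha z^\alpha$ is nonzero, so by algebraic independence of $B$ over $\mathcal{C}$ its shuffle-evaluation $t\in\mathcal{T}$ is nonzero, and the projection of $p(b_1,\dots,b_k)$ to $\mathcal{C}c_\beta\otimes\mathcal{T}$ equals $c_\beta\otimes t$. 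The point is that semisimplicity is exploited on the $\mathcal{C}_2$ side (every submodule of $\mathcal{C}_2$ is complemented), not as a flatness statement about the functor $\mathcal{C}_2\otimes_\mathcal{C}-$.
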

\begin{proof}
 Trivially, if $B$ generates the $\mathcal{C}$-algebra $(\mathcal{T},+,\shuffle)$, then $\{1\otimes{b}\ |\ b\in{B}\}$ generates the $\mathcal{C}_2$-algebra $(\mathcal{C}_2\otimes\mathcal{T},+,\shuffle)$.
 Now, the first claim follows by the first part of Theorem~\ref{thm:associatedgraded}.
\par
 The second claim is trivial, if $B$ is empty or $\mathcal{C}$ is the zero ring.
 Now, assume $B$ is nonempty and $\mathcal{C}$ (and hence also $\mathcal{T}$) is not zero.
 For proving the second claim, let $b_1,\dots,b_k \in B$ and let $p \in \mathcal{C}_2[z_1,\dots,z_k]$ be a nonzero polynomial $p=\sum_\alpha c_\alpha z^\alpha$.
 Since $p$ is nonzero, we can choose $\beta\in\mathbb{N}^k$ such that $c_\beta$ is nonzero.
 To show that the evaluation $p(b_1,\dots,b_k):=\sum_{\alpha}c_\alpha\mathop{\shuffle}_{i=1}^k(1\otimes{b_i})^{\shuffle\alpha_i}$ of $p$ at $z_i=1\otimes{b_i}$ in $(\mathcal{C}_2\otimes\mathcal{T},+,\shuffle)$ yields a nonzero element, we distinguish two cases.
\par
 First, assuming $\mathcal{C}_2$ is a semisimple $\mathcal{C}$-module, we can fix a direct complement of $\mathcal{C}c_\beta$ in $\mathcal{C}_2$.
 For each $\alpha$, this yields a unique $\tilde{c}_\alpha\in\mathcal{C}$ such that $c_\alpha-\tilde{c}_\alpha c_\beta$ lies in that complement.
 Note that $\tilde{c}_\beta=1$ and hence $\sum_\alpha \tilde{c}_\alpha z^\alpha$ is nonzero.
 Since $b_1,\dots,b_k$ are algebraically independent in $(\mathcal{T},+,\shuffle)$ over $\mathcal{C}$, we have that $\sum_{\alpha}\tilde{c}_\alpha c_\beta\mathop{\shuffle}_{i=1}^k(1\otimes{b_i})^{\shuffle\alpha_i}=c_\beta\otimes\sum_{\alpha}\tilde{c}_\alpha\mathop{\shuffle}_{i=1}^kb_i^{\shuffle\alpha_i}$ is nonzero.
 Consequently, $p(b_1,\dots,b_k)$ is nonzero as well, since the direct complement of $\mathcal{C}c_\beta$ in $\mathcal{C}_2$ carries over to a direct complement of $\mathcal{C}c_\beta\otimes\mathcal{T}$ in $\mathcal{C}_2\otimes\mathcal{T}$.
 Alternatively, if the $\mathcal{C}$-algebra generated by $B$ has a direct complement in $(\mathcal{T},+,\shuffle)$, we exploit that $p(b_1,\dots,b_k)=\sum_{\alpha}c_\alpha\otimes\mathop{\shuffle}_{i=1}^kb_i^{\shuffle\alpha_i}$.
 Since $B$ is algebraically independent in $(\mathcal{T},+,\shuffle)$ over $\mathcal{C}$, Lemma~\ref{lem:tensorproduct} implies that $p(b_1,\dots,b_k)$ is zero if and only if all $c_\alpha$ are zero.
\par
 Altogether, this shows that the set $\{1\otimes{b}\ |\ b\in{B}\}$ is algebraically independent in $(\mathcal{C}_2\otimes\mathcal{T},+,\shuffle)$ over $\mathcal{C}_2$.
 Then, the second part of Theorem~\ref{thm:associatedgraded} concludes the proof.
\end{proof}

\begin{corollary}\label{cor:basis}
 If a $\mathcal{C}$-algebra basis $B$ of $(\mathcal{T},+,\shuffle)$ consisting of homogeneous elements is given, then $\{1\otimes1\otimes1\otimes{b}\ |\ b\in{B}\}$ is a $\mathcal{R}\otimes\mathcal{C}_1\otimes\mathcal{C}_2\otimes\mathcal{C}\et$-algebra basis of $\mathrm{IDR}(\mathcal{R})$.
\end{corollary}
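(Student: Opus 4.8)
The plan is to derive the statement from Theorem~\ref{thm:basis} together with a base-change argument for free commutative algebras, with no further hypotheses needed. The crucial preliminary observation is that, since $B$ is a $\mathcal{C}$-algebra basis of $(\mathcal{T},+,\shuffle)$, it is simultaneously a generating set and algebraically independent over $\mathcal{C}$; in particular the $\mathcal{C}$-subalgebra of $(\mathcal{T},+,\shuffle)$ generated by $B$ is all of $\mathcal{T}$ and hence admits the trivial direct complement $\{0\}$ in $\mathcal{T}$. This means the complement branch of the hypothesis of the second part of Theorem~\ref{thm:basis} is automatically satisfied, so the semisimplicity assumption on $\mathcal{C}_2$ is not required here.

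First I would apply the first part of Theorem~\ref{thm:basis} to conclude that $\{1\otimes{b}\mid b\in B\}$ generates $(\mathcal{C}_2\otimes\mathcal{T},+,\cdot)$ as a $\mathcal{C}_2$-algebra, and the second part (via the complement branch just noted) to conclude that the same set is algebraically independent in $(\mathcal{C}_2\otimes\mathcal{T},+,\cdot)$ over $\mathcal{C}_2$. Together these two facts say that $(\mathcal{C}_2\otimes\mathcal{T},+,\cdot)$ is the free commutative $\mathcal{C}_2$-algebra on the set $\{1\otimes{b}\mid b\in B\}$; equivalently, there is a $\mathcal{C}_2$-algebra isomorphism onto the polynomial ring $\mathcal{C}_2[z_b:b\in B]$ sending $z_b\mapsto 1\otimes{b}$.

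Next I would promote this to $\mathrm{IDR}(\mathcal{R})$ by base change. By Definition~\ref{def:DefIDR}, $\mathrm{IDR}(\mathcal{R})=\mathcal{R}\otimes\mathcal{C}_1\otimes(\mathcal{C}_2\otimes\mathcal{T})$ is the $\mathcal{C}$-tensor product of the $\mathcal{C}$-algebras $\mathcal{R}$, $\mathcal{C}_1$, and $(\mathcal{C}_2\otimes\mathcal{T},+,\cdot)$. Setting $A:=\mathcal{R}\otimes\mathcal{C}_1$ and using the isomorphism from the previous step together with the standard identity $A\otimes_\mathcal{C}\mathcal{C}_2[z_b:b\in B]\cong(A\otimes_\mathcal{C}\mathcal{C}_2)[z_b:b\in B]$, I obtain that $\mathrm{IDR}(\mathcal{R})$ is a polynomial ring over $\mathcal{R}\otimes\mathcal{C}_1\otimes\mathcal{C}_2$ in the variables $z_b$. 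Identifying the degree-zero part $\mathcal{C}\et$ of $\mathcal{T}$ with $\mathcal{C}$, and noting that $\mathcal{C}_2\otimes\mathcal{C}\et$ acts as scalars under $\cdot$ since $(c\otimes\et)\cdot(d\otimes{g})=(c{\cdot}d)\otimes{g}$, the base ring $\mathcal{R}\otimes\mathcal{C}_1\otimes\mathcal{C}_2$ is precisely $\bar{\mathcal{C}}\cdot\iota(\mathcal{R})=\mathcal{R}\otimes\mathcal{C}_1\otimes\mathcal{C}_2\otimes\mathcal{C}\et$, and the variable $z_b$ corresponds to $1\otimes1\otimes1\otimes{b}$. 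Hence $\{1\otimes1\otimes1\otimes{b}\mid b\in B\}$ is a free commutative algebra basis of $\mathrm{IDR}(\mathcal{R})$ over $\mathcal{R}\otimes\mathcal{C}_1\otimes\mathcal{C}_2\otimes\mathcal{C}\et$, which is the claim.

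The hard part will be making the base-change step fully rigorous: one must verify that the tensor-product identity is an isomorphism of $\mathcal{C}$-algebras respecting the mixed multiplication $\cdot$ defining $\mathcal{C}_2\otimes\mathcal{T}$ (not merely the underlying module structure), and that the identification of $\mathcal{C}\et$ with the scalars is compatible with how $\mathcal{R}\otimes\mathcal{C}_1\otimes\mathcal{C}_2\otimes\mathcal{C}\et$ sits inside $\mathrm{IDR}(\mathcal{R})$. The genuine content, namely the absence of hidden relations, is already supplied by the algebraic-independence conclusion of Theorem~\ref{thm:basis}; the remainder is bookkeeping about which tensor factor each generator occupies.
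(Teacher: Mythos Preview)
Your proposal is correct and follows essentially the same approach as the paper: both observe that the trivial complement $\{0\}$ of $\mathcal{T}$ in $\mathcal{T}$ satisfies the second hypothesis of Theorem~\ref{thm:basis}, deduce that $\{1\otimes b\mid b\in B\}$ is a $\mathcal{C}_2$-algebra basis of $(\mathcal{C}_2\otimes\mathcal{T},+,\cdot)$, and then transfer this to $\mathrm{IDR}(\mathcal{R})$ by tensoring with $\mathcal{R}\otimes\mathcal{C}_1$ over $\mathcal{C}$. The only cosmetic difference is in the final step: the paper packages the base-change as a statement about module bases of power products via Lemma~\ref{lem:tensorproduct}, whereas you phrase it as the standard isomorphism $A\otimes_\mathcal{C}\mathcal{C}_2[z_b]\cong(A\otimes_\mathcal{C}\mathcal{C}_2)[z_b]$; these are equivalent, and your worry about rigor in that step is unfounded since the tensor product of commutative $\mathcal{C}$-algebras is defined precisely so that this identity holds.
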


\begin{proof}
 Since $\mathcal{T}$ has the trivial complement in $\mathcal{T}$, Theorem~\ref{thm:basis} implies that $\{1\otimes{b}\ |\ b\in{B}\}$ is a $\mathcal{C}_2$-algebra basis of $(\mathcal{C}_2\otimes\mathcal{T},+,\cdot)$.
 Since $\mathrm{IDR}(\mathcal{R})$ is the $\mathcal{C}_2$-tensor product of the $\mathcal{C}_2$-modules $\mathcal{R}\otimes\mathcal{C}_1\otimes\mathcal{C}_2$ and $\mathcal{C}_2\otimes\mathcal{T}$, it follows by Lemma~\ref{lem:tensorproduct} that the power products of $\{1\otimes1\otimes1\otimes{b}\ |\ b\in{B}\}$ are a $\mathcal{R}\otimes\mathcal{C}_1\otimes\mathcal{C}_2$-module basis of $\mathrm{IDR}(\mathcal{R})$ whenever the power products of $\{1\otimes{b}\ |\ b\in{B}\}$ are a $\mathcal{C}_2$-module basis of $\mathcal{C}_2\otimes\mathcal{T}$.
 Hence, the claim follows.
\end{proof}

If $\mathcal{C}$ is a field of characteristic zero and an ordered $\mathcal{C}$-vector space basis of $\mathcal{R}_\T$ is chosen, then \cite{Radford} gives a $\mathcal{C}$-algebra basis $B$ of $(\mathcal{T},+,\shuffle)$ consisting of pure tensors.
Corollary~\ref{cor:basis} then yields a $\mathcal{R}\otimes\mathcal{C}_1\otimes\mathcal{C}_2\otimes\mathcal{C}\et$-algebra basis of $\mathrm{IDR}(\mathcal{R})$.

\subsection{Linear independence of nested integrals}
\label{sec:independence}

Considering integrals over a given set $\{a_i\ |\ i\in{I}\}\subseteq\mathcal{R}$ of integrands, independence of these integrals is an important property.
Algebraic independence of (onefold) integrals \cite{Ostrowski} as well as linear independence of nested integrals \cite{DeneufchatelEtAl} have been studied in the literature.
In this subsection, we let $(\mathcal{S},\Der,\Int)$ be a commutative integro-differential ring such that $(\mathcal{S},\Der)$ is a differential ring extension of $(\mathcal{R},\Der)$ and we present a refined characterization of $\mathcal{R}$-linear independence of nested integrals in $\mathcal{S}$.
This independence over $\mathcal{R}$ relies only on the choice of integrands modulo $\Der\mathcal{R}$, hence we restrict to integrands from the complement $\mathcal{R}_\T$.

To conveniently discuss nested integrals over a chosen set of integrands, we use the following short-hand notations.
For any set $I$, the free monoid $\langle{I}\rangle$ consists of words $W=w_1\dots{w_n}$ of arbitrary length $|W|=n\in\mathbb{N}$ over the alphabet $I$.
Trivially, the unit element of concatenation of words in $\langle{I}\rangle$ is given by the empty word with $|W|=0$.
Fixing a subset $\{a_i\ |\ i\in{I}\}\subseteq\mathcal{R}_\mathrm{T}$, we define $\sigma:\langle{I}\rangle\to\mathcal{S}$ mapping words $W=w_1\dots{w_{|W|}} \in \langle{I}\rangle$ to nested integrals
\[
 \sigma(W):=\Int{a_{w_1}}\dots\Int{a_{w_{|W|}}},
\]
i.e.\ $\sigma(W)=1$ if $W$ is the empty word.
Analogously, we also define $\tau:\langle{I}\rangle\to\mathcal{T}$ by
\[
 \tau(W):=a_{w_1}\otimes\dots\otimes{a_{w_{|W|}}} \in \mathcal{T}
\]
to abbreviate tensors, where we have $\tau(W)=\et$ if $W$ is the empty word.

If $\mathcal{R}$ is a field of characteristic zero, then it follows by \cite[Thm.~1]{DeneufchatelEtAl} that $\{\sigma(W)\ |\ W \in \langle{I}\rangle\}$ is $\mathcal{R}$-linearly independent in $\mathcal{S}$ if $\{a_i\ |\ i\in{I}\}$ is $\mathcal{C}$-linearly independent.
For later use in Section~\ref{sec:closure}, we show in Theorem~\ref{thm:independence} by an adapted proof that this holds for more general $\mathcal{R}$ where every nonzero differential ideal contains a nonzero constant.
In particular, it holds for simple differential rings or, e.g., for Laurent series over any commutative Noetherian ring of coefficients, which follows from Lemma~\ref{lem:Noetherian}.

\begin{lemma}\label{lem:Noetherian}
 Let $\mathcal{C}$ be a commutative Noetherian ring with $\mathbb{Q}\subseteq\mathcal{C}$ and consider the usual derivation $\Der=\frac{d}{dx}$ on $\mathcal{C}((x))$.
 Then, for any nonzero $f\in\mathcal{C}((x))$, there are $n\in\mathbb{N}$, $m\in\mathbb{Z}$, and $b_0,\dots,b_n\in\mathcal{C}[[x]]$ such that $\sum_{i=0}^nb_i\Der^ix^{-m}f \in \mathcal{C}\setminus\{0\}$ with $x^{-m}f \in \mathcal{C}[[x]]$.
\end{lemma}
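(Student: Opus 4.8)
The plan is to reduce the statement to a fact about the \emph{differential ideal} generated by a single power series, and to show that such an ideal always contains a nonzero constant. Write $f=\sum_{k\ge N}a_kx^k$ with $a_N\neq0$, and set $m:=N$, so that $g:=x^{-m}f=a_N+a_{N+1}x+\cdots\in\mathcal{C}[[x]]$ is nonzero with constant term $a_N$. Recall that, since $\mathbb{Q}\subseteq\mathcal{C}$, the map $\Der=\frac{d}{dx}$ has kernel $\mathcal{C}$ and is surjective on $\mathcal{C}[[x]]$, the induced evaluation $\E$ being the constant-term map. Let $M:=\sum_{i\ge0}\mathcal{C}[[x]]\,\Der^ig$; this is the smallest differential ideal of $\mathcal{C}[[x]]$ containing $g$, and every element of it is a \emph{finite} $\mathcal{C}[[x]]$-combination of the $\Der^ig$. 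Hence it suffices to produce a nonzero $c\in M\cap\mathcal{C}$: writing $c=\sum_{i=0}^nb_i\Der^ig=\sum_{i=0}^nb_i\Der^ix^{-m}f$ then yields the required $n$, $m$, and $b_0,\dots,b_n$.

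The core of the argument rests on two observations about $M$, neither of which uses the Noetherian hypothesis. First, $L:=\E(M)=\{[x^0]h\mid h\in M\}$ is an ideal of $\mathcal{C}$, and it is nonzero because $\E(g)=a_N\neq0$. Second, and crucially, \emph{every} coefficient of \emph{every} element of $M$ already lies in $L$: for $h\in M$ one has $[x^k]h=\tfrac{1}{k!}\E(\Der^kh)$, and $\Der^kh\in M$ since $M$ is a differential ideal, so $[x^k]h\in\tfrac{1}{k!}L\subseteq L$ (again using $\mathbb{Q}\subseteq\mathcal{C}$).

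With these in hand, I would construct the constant by successive approximation in the $x$-adic topology. Start from $w_0:=g\in M$ with nonzero constant term $c:=a_N$. Inductively, suppose $w_{n-1}\in M$ agrees with $c$ modulo $x^n$. Its next coefficient $c_n:=[x^n]w_{n-1}$ lies in $L$ by the second observation, so there is some $h_n\in M$ with constant term $c_n$; then $w_n:=w_{n-1}-x^nh_n\in M$ leaves all coefficients below $x^n$ unchanged while killing the coefficient of $x^n$, so $w_n\equiv c\pmod{x^{n+1}}$. Thus $(w_n)$ is a sequence in $M$ converging $x$-adically to the constant $c$.

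The remaining, and main, obstacle is to pass from ``$c$ is congruent to elements of $M$ modulo every power of $x$'' to the genuine membership $c\in M$, i.e.\ to know that $M$ is $x$-adically closed. This is precisely where the Noetherian hypothesis enters: $\mathcal{C}[[x]]$ is Noetherian (formal power series over a Noetherian ring), and $x$ lies in its Jacobson radical, since every $1-xs$ has constant term $1$ and is hence a unit. Applying the Krull intersection theorem to the finitely generated module $\mathcal{C}[[x]]/M$ gives $\bigcap_n\bigl(M+x^n\mathcal{C}[[x]]\bigr)=M$, so $c\in M$. As $c=a_N\in\mathcal{C}\setminus\{0\}$ and $c\in M$ is a finite combination of the $\Der^ig$, expanding that combination completes the proof.
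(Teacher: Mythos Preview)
Your proof is correct and takes a genuinely different route from the paper's. The paper uses the Noetherian hypothesis on $\mathcal{C}$ directly via the ascending chain condition: writing $x^{-m}f=\sum_{i\ge0}f_{m+i}x^i$, it finds $n$ such that all coefficients $f_{m+i}$ lie in the ideal $(f_m,\dots,f_{m+n})\subseteq\mathcal{C}$, and then solves an explicit recursion for the coefficients of the $b_i$ so that the combination $\sum_{i=0}^n b_i\,\Der^i x^{-m}f$ collapses to the constant $\sum_{i=0}^n i!\,b_{i,0}f_{m+i}$. By contrast, you work with the differential ideal $M\subseteq\mathcal{C}[[x]]$ generated by $g=x^{-m}f$, build approximants $w_n\in M$ converging $x$-adically to the constant $a_N$, and invoke the Krull intersection theorem for the Noetherian ring $\mathcal{C}[[x]]$ to conclude $a_N\in M$. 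The paper's argument is fully constructive and makes the bound on $n$ explicit up front; your argument is cleaner and more conceptual, isolating exactly where Noetherianity enters (closedness of ideals in the $x$-adic topology), at the cost of obtaining $n$ only a posteriori from the finite expression of $a_N$ in $M$.
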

\begin{proof}
 For any nonzero $f=\sum_{i=m}^{\infty}f_ix^i\in\mathcal{C}((x))$, there is some $n\in\mathbb{N}$ such that any $f_i$ lies in the ideal $(f_m,\dots,f_{m+n}) \subseteq \mathcal{C}$.
 In particular, the coefficients $f_m,\dots,f_{m+n}$ are not all zero.
 After choosing $b_{0,0},\dots,b_{n,0} \in \mathcal{C}$ such that $\sum_{i=0}^ni!b_{i,0}f_{m+i}\neq0$, this allows to choose $b_{0,j},\dots,b_{n,j} \in \mathcal{C}$ recursively for all $j\in\mathbb{N}$ such that $\sum_{i=0}^ni!b_{i,j}f_{m+i}=-\sum_{i=0}^n\sum_{k=1}^j\frac{(k+i)!}{k!}b_{i,j-k}f_{m+k+i}$.
 Finally, with these $b_{i,j}$, one can verify by a straightforward calculation that $\sum_{i=0}^n(\sum_{j=0}^{\infty}b_{i,j}x^j)\Der^ix^{-m}f$ is equal to $\sum_{i=0}^ni!b_{i,0}f_{m+i} \in \mathcal{C}\setminus\{0\}$.
\end{proof}

\begin{theorem}\label{thm:independence}
 Let $(\mathcal{S},\Der,\Int)$ be a commutative integro-differential ring such that $(\mathcal{S},\Der)$ is a differential ring extension of $(\mathcal{R},\Der)$.
 Let $\{a_i\ |\ i\in{I}\}\subseteq\mathcal{R}_\mathrm{T}$.
 \begin{enumerate}
  \item\label{item:independence} If $\{a_i\ |\ i\in{I}\}$ is $\mathcal{C}$-linearly independent, the $\mathcal{C}$-submodule generated by it is complemented in $\mathcal{R}_\mathrm{T}$, and the differential ring $(\mathcal{R},\Der)$ is such that, for every nonzero element, the differential ideal generated by it contains a nonzero constant, then $\{\sigma(W)\ |\ W \in \langle{I}\rangle\}\subseteq\mathcal{S}$ is $\mathcal{R}$-linearly independent.
  \item\label{item:converse} Conversely, if $\{\sigma(w)\ |\ w \in I\}\subseteq\mathcal{S}$ is $\mathcal{C}$-linearly independent and $\const_\Der(\mathcal{S})\mathcal{R}\subseteq\mathcal{S}$ is such that any finitely generated $\mathcal{R}$-submodule is contained in a free $\mathcal{R}$-submodule of $\const_\Der(\mathcal{S})\mathcal{R}$, then $\{a_i\ |\ i\in{I}\}\subseteq\mathcal{S}$ is $\const_\Der(\mathcal{S})$-linearly independent.
 \end{enumerate}
\end{theorem}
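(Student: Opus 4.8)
The backbone of both parts is the single ``peeling'' identity obtained from $\Der\Int=\id$ and the Leibniz rule: for a nonempty word $W=w_1\dots w_{|W|}\in\langle{I}\rangle$ one has $\Der\sigma(W)=a_{w_1}\sigma(W')$, where $W'=w_2\dots w_{|W|}$ denotes $W$ with its first letter removed. Thus $\Der$ shortens nested integrals by one, and $\E\sigma(W)$ vanishes for $|W|\ge1$ while $\E\sigma(\emptyset)=1$. Since $\Der$ and $\Int$ are $\const_\Der(\mathcal{S})$-linear, the only genuine difficulty in either direction is the mismatch between $\mathcal{R}$-coefficients (resp.\ $\const_\Der(\mathcal{S})$-coefficients) and the mere $\mathcal{C}$-linearity available from $\E$.

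For part~\ref{item:independence}, the plan is to argue by induction on the maximal word length $N$ occurring in a putative relation $\sum_{W}r_W\sigma(W)=0$ with $r_W\in\mathcal{R}$ not all zero. The base case $N=0$ is immediate. For the inductive step I would first apply $\Der$ repeatedly; because peeling only lowers length, the coefficient of a length-$N$ word $W$ in $\Der^k(\sum_W r_W\sigma(W))$ is exactly $\Der^k r_W$, so each $\Der^k$ again yields a relation of length $\le N$. Given a length-$N$ word $U$ with $r_U\neq0$, the hypothesis that the differential ideal generated by $r_U$ meets $\mathcal{C}\setminus\{0\}$ (which, by Lemma~\ref{lem:Noetherian}, covers the Laurent-series situation) produces an operator $L=\sum_k b_k\Der^k$ with $L(r_U)\in\mathcal{C}\setminus\{0\}$; replacing the relation by $L$ applied to it turns the coefficient at $U$ into a nonzero constant. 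One then peels the outermost integral off this constant-carrying top term and uses $\E$ together with the complementation of $\linspan_\mathcal{C}\{a_i\}$ in $\mathcal{R}_\T$ — which guarantees that the integrands surviving the peeling stay $\mathcal{C}$-linearly independent — to isolate a contradiction and descend to length $N-1$.

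For part~\ref{item:converse}, note first that $\Int$ and $\Der$ are mutually inverse $\const_\Der(\mathcal{S})$-linear maps, so $\{a_i\}$ is $\const_\Der(\mathcal{S})$-linearly independent if and only if $\{\sigma(w_i)\}=\{\Int{a_i}\}$ is; moreover, $\mathcal{C}$-linearity and injectivity of $\Int$ already force $\{a_i\}$ to be $\mathcal{C}$-linearly independent. The content is thus to upgrade $\mathcal{C}$-independence of $\{a_i\}$ to $\const_\Der(\mathcal{S})$-independence. Assuming a relation $\sum_i\gamma_i a_i=0$ with $\gamma_i\in\const_\Der(\mathcal{S})$, the freeness hypothesis lets me embed the finitely generated $\mathcal{R}$-submodule $\sum_i\mathcal{R}\gamma_i$ of $\const_\Der(\mathcal{S})\mathcal{R}$ into a free $\mathcal{R}$-module; expanding $\gamma_i=\sum_\lambda r_{i\lambda}e_\lambda$ in a basis and invoking Lemma~\ref{lem:tensorproduct}, I can read off from the relations $\sum_i\gamma_i\Der^k a_i=0$ — which hold for every $k\ge0$ because the $\gamma_i$ are constants and each $\Der^k a_i$ lies in $\mathcal{R}$ — the tower $\sum_i r_{i\lambda}\Der^k a_i=0$ in $\mathcal{R}$. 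It then remains to deduce $r_{i\lambda}=0$ from this tower and the $\mathcal{C}$-independence of $\{a_i\}$.

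The main obstacle in both parts is precisely this last manoeuvre of passing from relations with $\mathcal{R}$-coefficients to ones with $\mathcal{C}$-coefficients. In part~\ref{item:independence} it is the reason the hypothesis on differential ideals is indispensable, and why the bookkeeping of the peeling contributions at lower lengths must be organised so that the constant produced is not re-contaminated. In part~\ref{item:converse} it reappears as the need to show that the $\Der$-stable module of solutions $(s_i)$ of $\sum_i s_i\Der^k a_i=0$ is trivial — equivalently, that the extension $\mathcal{C}\subseteq\const_\Der(\mathcal{S})$ cannot create new dependences among $\mathcal{C}$-independent elements of $\mathcal{R}$ once the freeness of $\const_\Der(\mathcal{S})\mathcal{R}$ over $\mathcal{R}$ is available. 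I expect this to be the technically delicate heart of the proof, whereas the reductions leading up to it are routine manipulations with $\Der$, $\Int$, and $\E$.
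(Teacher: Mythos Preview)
Your overall strategy for part~\ref{item:independence} is the right one and matches the paper's, but the induction you set up is too coarse and the closing move is misidentified. A single application of your operator $L$ followed by one $\Der$ does \emph{not} reduce the maximal length $N$: it only zeroes the coefficient at the chosen word $U$, while the other length-$N$ words may survive with nonconstant coefficients. The paper handles this by working with a \emph{double} minimality on the pair $(m,k)$, where $k$ is the number of words of maximal length $m$; after making one top coefficient constant, $\Der$ of the relation has strictly smaller $(m,k)$ and hence, by minimality, must itself be the zero relation. From there the conclusion is not ``peel and apply $\E$'' but rather: the relation is a constant element of the free module, so in particular $m=0$ and one is left with a nonzero constant mapping to zero, a contradiction. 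The paper also carries this out inside $\mathrm{IDR}(\mathcal{R})$ via the submodule $M$ of~\eqref{eq:defMgen} and the homomorphism $\eta$ of Theorem~\ref{thm:universal}; the complementation hypothesis is used (through Lemma~\ref{lem:tensorproduct}) to guarantee unique representations in $M$, not to separate integrands after a peel as you suggest.

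For part~\ref{item:converse}, your detour through the tower $\sum_i r_{i\lambda}\Der^k a_i=0$ for all $k\ge0$ is not how the paper proceeds, and it does not close: nothing about $\mathcal{C}$-independence of the $a_i$ forces such a system with $r_{i\lambda}\in\mathcal{R}$ to be trivial. The paper's argument uses no higher derivatives at all. One expands the $c_i$ in an $\mathcal{R}$-linearly independent family $d_1,\dots,d_l$ with coefficients $c_{i,j}\in\mathcal{C}$, reads off $\sum_i c_{i,j}a_{w_i}=0$ for each $j$ from $\mathcal{R}$-independence of the $d_j$, and then applies $\Int$ --- which is $\mathcal{C}$-linear --- to obtain $\sum_i c_{i,j}\sigma(w_i)=0$, whence all $c_{i,j}$ vanish by the assumed $\mathcal{C}$-independence of $\{\sigma(w)\mid w\in I\}$. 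So the key point you flag as ``technically delicate'' is bypassed by arranging the basis coefficients to lie in $\mathcal{C}$ from the outset rather than in $\mathcal{R}$.
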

\begin{proof}
 First, assume that $\{a_i\ |\ i\in{I}\}$ is $\mathcal{C}$-linearly independent and generates a complemented submodule in $\mathcal{R}_\mathrm{T}$.
 Let us consider the $\mathcal{R}$-submodule
 \begin{equation}\label{eq:defMgen}
  M:=\mathcal{R}\otimes\mathcal{C}\otimes\mathcal{C}\otimes{T(\linspan_\mathcal{C}\{a_i\ |\ i\in{I}\})}\subseteq\mathrm{IDR}(\mathcal{R}).
 \end{equation}
 Then, by Lemma~\ref{lem:tensorproduct}, any $f \in M$ can be uniquely written as $f=\sum_{i=1}^nf_i\otimes1\otimes1\otimes\tau(W_i)$ where $f_i\in\mathcal{R}\setminus\{0\}$ and $W_i \in \langle{I}\rangle$ are pairwise distinct with $|W_i|\le|W_1|$.
 For such representation, we let $m:=|W_1|$ and we let $k$ be the number of words $W_i$ with $|W_i|=m$.
 We let $\eta:\mathrm{IDR}(\mathcal{R})\to\mathcal{S}$ be the unique integro-differential ring homomorphism such that $\eta(\iota(f))=f$ for all $f \in \mathcal{R}$, given by Theorem~\ref{thm:universal}.
 Note that $\eta(f)=\sum_{i=1}^nf_i\sigma(W_i)$.
 Assume there is a nonzero $f=\sum_{i=1}^nf_i\otimes1\otimes1\otimes\tau(W_i) \in M$ such that $\eta(f)=0$.
 Let such an $f$ be chosen such that $m$ is minimal and $k$ is minimal for this $m$.
 By $\Der{M} \subseteq M$ and assumption on $(\mathcal{R},\Der)$, we can assume w.l.o.g.\ that $f_1\in\mathcal{C}\setminus\{0\}$, because otherwise there are $b_0,\dots,b_l\in\mathcal{R}$ such that $\sum_{j=0}^lb_j\Der^jf_1\in\mathcal{C}\setminus\{0\}$ and we can replace $f$ by $\sum_{j=0}^lb_j\Der^jf\in\ker(\eta)\cap{M}$, which satisfies $\sum_{j=0}^lb_j\Der^jf-\sum_{i=1}^n\big(\sum_{j=0}^lb_j\Der^jf_i\big)\otimes1\otimes1\otimes\tau(W_i) \in \mathcal{R}\otimes\mathcal{C}\otimes\mathcal{C}\otimes\bigoplus_{j=0}^{m-1}\mathcal{R}_\T^{\otimes{j}}$ by \eqref{eq:DefDeriv}.
 Now, $0=\eta(\Der{f})=\eta\big(\sum_{i=2}^n(\Der{f_i})\otimes1\otimes1\otimes\tau(W_i)+\sum_{i=1}^nf_i\Der(1\otimes1\otimes1\otimes\tau(W_i))\big)$.
 Since $\Der(1\otimes1\otimes1\otimes\tau(W_i)) \in \mathcal{R}\otimes\mathcal{C}\otimes\mathcal{C}\otimes\bigoplus_{j=0}^{m-1}\mathcal{R}_\T^{\otimes{j}}$ by \eqref{eq:DefDeriv}, we obtain $\Der{f}=0$ by minimality of $m$ and $k$.
 By $M\cap\bar{\mathcal{C}}=\iota(\mathcal{C})$, this implies $f\in\iota(\mathcal{C})$ and hence $m=0$, $n=k=1$, and $f=\iota(f_1)$.
 By definition of $\eta$, this implies that $\eta(f)$ is nonzero in contradiction to $\eta(f)=0$.
\par
 Conversely, assume that $\const_\Der(\mathcal{S})\mathcal{R}$ is a free $\mathcal{R}$-module and $\{\sigma(w)\ |\ w \in I\}\subseteq\mathcal{S}$ is $\mathcal{C}$-linearly independent.
 Let $c_1,\dots,c_k \in \const_\Der(\mathcal{S})$ and pairwise distinct $w_1,\dots,w_k \in I$ such that $\sum_{i=1}^kc_ia_{w_i}=0$.
 There are $\mathcal{R}$-linearly independent $d_1,\dots,d_l \in \const_\Der(\mathcal{S})\mathcal{R}$ such that we have $c_i=\sum_{j=1}^lc_{i,j}d_j$ with $c_{i,j}\in\mathcal{C}$ for all $i\in\{1,\dots,k\}$.
 Now, $0=\sum_{i=1}^kc_ia_{w_i} = \sum_{j=1}^ld_j\sum_{i=1}^kc_{i,j}a_{w_i}$.
 Since $d_1,\dots,d_l$ are $\mathcal{R}$-linearly independent, we obtain $\sum_{i=1}^kc_{i,j}a_{w_i}=0$ for every $j\in\{1,\dots,l\}$.
 Applying $\Int$, it follows that all $c_{i,j}$ are zero.
\end{proof}

\begin{remark}\label{rem:independence}
On the one hand, the formulation of Theorem~\ref{thm:independence} is more general than \cite[Thm.~1]{DeneufchatelEtAl}, since we do not require $\mathcal{R}$ to be a field of characteristic zero and we assume only $\mathcal{C}$-linear independence of $\{a_i\ |\ i\in{I}\}$ resp.\ $\{\sigma(w)\ |\ w \in I\}$ instead of independence over $\const_\Der(\mathcal{S})$ resp.\ $\mathcal{R}$.
On the other hand, the formulation of Theorem~\ref{thm:independence} is less general than \cite[Thm.~1]{DeneufchatelEtAl}, since the latter requires only a commutative differential ring $(\mathcal{S},\Der)$ instead of an integro-differential ring $(\mathcal{S},\Der,\Int)$ and does not assume that $\linspan_\mathcal{C}\{a_i\ |\ i\in{I}\}$ lies in a complement of $\Der\mathcal{R}$.
In the proof of Theorem~\ref{thm:independence}, we use $M \subseteq \mathrm{IDR}(\mathcal{R})$ defined by \eqref{eq:defMgen} instead of $\mathcal{R}\langle\langle{I}\rangle\rangle=\mathcal{R}^{\langle{I}\rangle}$, but the statement and proof could be carried over, since the differential module $M$ is canonically isomorphic to $\mathcal{R}\langle{I}\rangle\subseteq\mathcal{R}\langle\langle{I}\rangle\rangle$ by $f\otimes1\otimes1\otimes\tau(W)\mapsto{fW}$.
\par
Note that, in \cite{DeneufchatelEtAl}, the proof that independence of nested integrals implies independence of $\{a_i\ |\ i\in{I}\}$ modulo $\Der\mathcal{R}$ actually quietly assumes that the nested integrals are $\const_\Der(\mathcal{S})\mathcal{R}$-linearly independent, while the statement of \cite[Thm.~1]{DeneufchatelEtAl} only assumes $\mathcal{R}$-linear independence.
Indeed, \cite[Thm.~1]{DeneufchatelEtAl} need not hold if $\const_\Der(\mathcal{S})\neq\mathcal{C}$.
For example, if the field $\mathcal{R}=\mathbb{Q}((x))$ and its purely transcendental ring extension $\mathcal{S}=\mathcal{R}[z,\ln(x)]$ are considered with the usual derivation $\Der=\frac{d}{dx}$, then $\const_\Der(\mathcal{S})=\mathcal{C}[z]$ differs from $\mathcal{C}=\mathbb{Q}$.
Letting $I=\{0,1\}$, $a_0=\frac{1}{x}$, and $a_1=a_0+1$, we then can choose integration constants $z^i$ in the nested integrals over $\{a_0,a_1\}$ to obtain $\mathcal{R}$-linearly independent antiderivatives $\ln(x),\ln(x)+x+z,\frac{1}{2}\ln(x)^2+z^2,\frac{1}{2}\ln(x)^2+x+z^3,\dots$ even though the integrands $\{a_0,a_1\}$ are not even $\mathcal{C}$-linearly independent modulo $\Der\mathcal{R}$.
\end{remark}

\subsection{The relations of constants in $\mathcal{C}_2$}
\label{sec:constants}

The definition \eqref{eq:DefC2} of the ring $\mathcal{C}_2$ as quotient of the symmetric algebra $\Sym(\ep(M_2))$ allows to express some of these constants in terms of others via the relations \eqref{eq:ConstRel}.
In this section, we aim to identify the remaining constants, which can be used to express all others in $\mathcal{C}_2$.
To this end, we introduce an order on generators of $\ep(M_2)$ to make use of the relations \eqref{eq:ConstRel} in a systematic way.

Our definition of this order will follow the three layers in the construction of elements of $\ep(M_2)$.
Since elements in $\ep(M_2)$ represent evaluations of products of nested integrals, the order will be based on an order of nested integrals.
Integrands appearing in these nested integrals are in $\mathcal{R}_\mathrm{T}$, i.e.\ they are not integrable in $\mathcal{R}$.
So, the order of nested integrals will be based on an order of these non-integrable elements.
Altogether, we start from an ordered basis of $\mathcal{R}_\mathrm{T}$, words formed from this basis give rise to nested integrals, and pairs of such words give rise to generators of $\ep(M_2)$.

Writing a relation \eqref{eq:ConstRel} in terms of these ordered generators of $\ep(M_2)$ allows to split it into the greatest appearing generator and the remainder arising only from smaller generators of $\ep(M_2)$, cf.\ Lemma~\ref{lem:leadingmonomial}.
This {\lq\lq}leading term{\rq\rq} actually arises from maximal shuffles of words and appears with an integer coefficient.
If we can divide by this integer coefficient, then the relation enables to directly replace the {\lq\lq}leading term{\rq\rq} by the {\lq\lq}remainder{\rq\rq}.
Generators of $\ep(M_2)$ that cannot be reduced this way arise from pairs of words that involve Lyndon words, see Theorem~\ref{thm:leadingmonomial} below for details.

In the following syntactic analysis of relations, we assume that $\mathcal{R}_\mathrm{T}$ is a free $\mathcal{C}$-module, since the order of generators of $\ep(M_2)$ relies on an ordered basis of $\mathcal{R}_\mathrm{T}$.
Then, Corollary~\ref{cor:gensC2} below states the consequences for arbitrary $\mathcal{R}_\mathrm{T}$, which is the goal of this section.
Moreover, if $\mathcal{R}_\mathrm{T}$ is a free $\mathcal{C}$-module, we conjecture a free set of generators of $\mathcal{C}_2$ in Conjecture~\ref{conj:freeC2} below.

Now, when $\mathcal{R}_\mathrm{T}$ is a free $\mathcal{C}$-module, the module $M_2$ defined by \eqref{eq:DefM2} is a free $\mathcal{C}$-module as well, as is any isomorphic copy $\ep(M_2)$.
In this case, $\Sym(\ep(M_2))$ is a polynomial ring over $\mathcal{C}$ with generators being a $\mathcal{C}$-module basis of $\ep(M_2)$.
Explicitly, for any basis $\{a_i\ |\ i\in{I}\}\subseteq\mathcal{R}_\mathrm{T}$, $\Sym(\ep(M_2))$ is a (commutative) polynomial ring in the set of all
\[
 c(V,W):=\ep((a_{v_1}\otimes\dots\otimes{a_{v_{|V|}}})\odot(a_{w_1}\otimes\dots\otimes{a_{w_{|W|}}})) \in \ep(M_2),
\]
where $V=v_1\dots{v_{|V|}}$ and $W=w_1\dots{w_{|W|}}$ are nonempty words in the word monoid $\langle{I}\rangle$.
Since $c(V,W)=c(W,V)$ by commutativity of $\odot$, we obtain the same set if we consider only those $c(V,W)$ where $V$ is not greater than $W$.
More precisely, any total order $\le$ on the index set $I$ induces a degree-lexicographic order $\le_\mathrm{dlex}$ on $\langle{I}\rangle$ by comparing words first by their length and then, in case of a tie, according to the lexicographic order.
Moreover, on the set
\begin{equation}\label{eq:varsC2}
 \{c(V,W)\ | \ V,W\in\langle{I}\rangle^*,V\le_\mathrm{dlex}W\} \subseteq \ep(M_2)
\end{equation}
that generates the polynomial ring $\Sym(\ep(M_2))$, we define a total order $\preceq$ by ordering $(|V|+|W|,V,W)$ according to the lexicographic order on $\mathbb{N}\times\langle{I}\rangle^2$ arising from the usual order on $\mathbb{N}$ together with $(\langle{I}\rangle,\le_\mathrm{dlex})$.
For the remainder of this section, whenever we write $c(V,W)$, we implicitly assume that $V,W\in\langle{I}\rangle^*$ are such that $V\le_\mathrm{dlex}W$.

For shorter notation in the following, recall the definition of $\tau:\langle{I}\rangle\to\mathcal{T}$ from Section~\ref{sec:independence}.
In addition, for $V_1,V_2,V_3\in\langle{I}\rangle^*$, we abbreviate by $r(V_1,V_2,V_3)$ the element of $\Sym(\ep(M_2))$ obtained by setting $f=\tau(V_1)$, $g=\tau(V_2)$, and $h=\tau(V_3)$ in \eqref{eq:ConstRel}.
By commutativity of $\shuffle$ and $\odot$, swapping $f$ and $h$ in \eqref{eq:ConstRel} just changes sign of the element given by \eqref{eq:ConstRel}.
Consequently, \eqref{eq:ConstRel} is zero in $\Sym(\ep(M_2))$ whenever $f=h$.
Since \eqref{eq:ConstRel} is $\mathcal{C}$-multilinear in $f,g,h$, it follows straightforwardly that the ideal $J_2$ defining $\mathcal{C}_2$ by \eqref{eq:DefC2} is generated by
\begin{equation}\label{eq:gensJ2}
 \{r(V_1,V_2,V_3)\ |\ V_1,V_2,V_3\in\langle{I}\rangle^*,V_1<_\mathrm{dlex}V_3\} \subseteq \Sym(\ep(M_2))
\end{equation}
whenever $\{a_i\ |\ i\in{I}\}$ generates $\mathcal{R}_\mathrm{T}$ as a $\mathcal{C}$-module.

Closely related to the shuffle product of tensors, in the following, we also need to argue about shuffles of words.
For words $V,W\in\langle{I}\rangle$, the set $\sh(V,W)\subseteq\langle{I}\rangle$ of shuffles of $V$ and $W$ can be defined by $\sh(V,W):=\{VW\}$, if $V$ or $W$ is empty, and recursively by
\[
 \sh(V,W):=\{v_1U\ |\ U\in{\sh(v_2\dots{v_{|V|}},W)}\}\cup\{w_1U\ |\ U\in{\sh(V,w_2\dots{w_{|W|}})}\}
\]
if $V,W$ are nonempty.
Note that every word in $\sh(V,W)$ has length $|V|+|W|$.
By \eqref{eq:shufflerec}, the relation with the shuffle product $\shuffle$ is obviously that
\[
 \tau(V)\shuffle\tau(W)=\sum_{U\in{\sh(V,W)}}c_U\tau(U)
\]
with coefficients $c_U$ being positive integers depending on $V$ and $W$.

\begin{lemma}\label{lem:leadingmonomial}
 Assume that $\mathcal{R}_\mathrm{T}$ is a free $\mathcal{C}$-module with basis $\{a_i\ |\ i\in{I}\}\subseteq\mathcal{R}_\mathrm{T}$.
 Let $\le$ be a total order on $I$.
 Consider the sets \eqref{eq:varsC2} and \eqref{eq:gensJ2} defined above and let $\preceq$ be the total order on \eqref{eq:varsC2} defined above.
 Then, for any $V_1,V_2,V_3\in\langle{I}\rangle^*$ with $V_1<_\mathrm{dlex}V_3$, the greatest $c(V,W)$ appearing in $r(V_1,V_2,V_3)$ arises only from the term $\ep((f\shuffle{g})\odot{h})$ in \eqref{eq:ConstRel}.
 In particular, this $c(V,W)$ appears linearly in $r(V_1,V_2,V_3)$ with integer coefficient and is not multiplied by any other elements of \eqref{eq:varsC2}.
\end{lemma}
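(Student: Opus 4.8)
The plan is to substitute $f=\tau(V_1)$, $g=\tau(V_2)$, $h=\tau(V_3)$ into \eqref{eq:ConstRel}, write $n:=|V_1|$, $m:=|V_2|$, $l:=|V_3|$, and compare the generators $c(V,W)$ contributed by the four groups of terms, exploiting that the order $\preceq$ first compares the total degree $|V|+|W|$ of a generator. So I would begin with a degree count. Using $\tau(V_1)\shuffle\tau(V_2)=\sum_{U\in\sh(V_1,V_2)}c_U\,\tau(U)$ with positive integers $c_U$, the term $\ep((f\shuffle{g})\odot{h})$ is a $\mathcal{C}$-linear combination of the generators $\ep(\tau(U)\odot\tau(V_3))$, $U\in\sh(V_1,V_2)$, each of total degree $(n+m)+l=n+m+l$; likewise $\ep(f\odot(g\shuffle{h}))$ is a combination of $\ep(\tau(V_1)\odot\tau(U'))$, $U'\in\sh(V_2,V_3)$, again of degree $n+m+l$. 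For the two double sums, a direct bound on the index ranges shows that each of the factors $\ep(f_{i+1}^n\odot{g_{j+1}^m})$, $\ep((f_1^i\shuffle{g_1^j})\odot{h})$, $\ep(f\odot(g_1^j\shuffle{h_1^k}))$, $\ep(g_{j+1}^m\odot{h_{k+1}^l})$ is a generator of total degree at most $n+m+l-2$ (using $i+j\ge1$, resp.\ $j+k\ge1$, on the relevant summation ranges, together with $n,l\ge1$). Hence these terms contribute only products of generators of degree strictly below $n+m+l$, so they contain no generator of degree $n+m+l$. Therefore the $\preceq$-greatest generator of $r(V_1,V_2,V_3)$ has degree exactly $n+m+l$ and can occur only in the two single-degree terms $\ep((f\shuffle{g})\odot{h})$ and $\ep(f\odot(g\shuffle{h}))$; in particular it occurs linearly and is not multiplied by any other generator from \eqref{eq:varsC2}.

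It then remains to decide between these two terms, and this is where I would use the hypothesis $V_1<_\mathrm{dlex}V_3$, which forces $n\le l$. Since all these generators share the total degree $n+m+l$, the order $\preceq$ compares them through the $\le_\mathrm{dlex}$-smaller of the two words in a pair. Every generator from $\ep((f\shuffle{g})\odot{h})$ is $\ep(\tau(U)\odot\tau(V_3))$ with $|U|=n+m$, so its smaller word has length $\min(n+m,l)$; every generator from $\ep(f\odot(g\shuffle{h}))$ is $\ep(\tau(V_1)\odot\tau(U'))$ with $|U'|=m+l>n=|V_1|$, so its smaller word is $V_1$, of length $n$. If $l>n$ then $\min(n+m,l)>n$, so the smaller word of any term-1 generator is strictly longer than $V_1$; if $l=n$ then $\min(n+m,l)=n$ and the smaller word of any term-1 generator is $V_3$ (as $|V_3|=n<n+m$), which satisfies $V_3>_\mathrm{dlex}V_1$ by hypothesis. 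In either case, every generator occurring in $\ep((f\shuffle{g})\odot{h})$ is $\succ$ every generator occurring in $\ep(f\odot(g\shuffle{h}))$. Combined with the degree bound above, this shows the greatest generator of $r(V_1,V_2,V_3)$ arises only from $\ep((f\shuffle{g})\odot{h})$. Note that the assumption $V_1<_\mathrm{dlex}V_3$, rather than merely $V_1\ne V_3$, is exactly what breaks the tie in the balanced boundary case $l=n$.

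To finish, I would identify this greatest generator explicitly as $\ep(\tau(U_{\max})\odot\tau(V_3))$, where $U_{\max}:=\max_{\le_\mathrm{dlex}}\sh(V_1,V_2)$, and read off its coefficient. A short case distinction on whether $|V_3|$ is smaller than, larger than, or equal to $n+m$ confirms that maximizing the generator over $U\in\sh(V_1,V_2)$ amounts to choosing $U=U_{\max}$; since distinct $U$ yield distinct unordered pairs $\{U,V_3\}$, this generator occurs with coefficient exactly the shuffle coefficient $c_{U_{\max}}$, a positive integer, matching the claim. I expect the only delicate point to be this final maximization in the balanced case $l=n+m$, where $U$ and $V_3$ have equal length so their roles as the smaller and larger word in the pair depend on comparing $U_{\max}$ with $V_3$; the degree bookkeeping of the first paragraph and the term-versus-term comparison of the second are otherwise routine once the order $\preceq$ is unwound.
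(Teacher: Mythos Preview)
Your proof is correct and follows essentially the same approach as the paper's: a degree count to discard the double-sum terms, followed by a comparison of the two linear terms via the smaller word in each generator. Your two-case split ($l>n$ versus $l=n$) is a slight streamlining of the paper's three-case split on the sign of $(n+m)-l$, made possible by your observation that the smaller word of any term-1 generator has length $\min(n+m,l)$; your final paragraph identifying $U_{\max}$ and its coefficient goes a bit beyond what the lemma requires and anticipates material the paper places in the proof of Theorem~\ref{thm:leadingmonomial}.
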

\begin{proof}
 Let $V_1,V_2,V_3\in\langle{I}\rangle^*$ with $V_1<_\mathrm{dlex}V_3$ and let $f:=\tau(V_1)$, $g:=\tau(V_2)$, and $h:=\tau(V_3)$ so that $r(V_1,V_2,V_3)$ equals \eqref{eq:ConstRel}.
 From the properties of the shuffle product, it is immediate that those $c(V,W)$ in \eqref{eq:varsC2} arising from the terms $\ep((f\shuffle{g})\odot{h})$ and $\ep(f\odot(g\shuffle{h}))$ satisfy $|V|+|W|=|V_1|+|V_2|+|V_3|$, while those arising from the remaining terms in \eqref{eq:ConstRel} satisfy $|V|+|W|<|V_1|+|V_2|+|V_3|$.
 Consequently, the greatest element of \eqref{eq:varsC2} appearing in $r(V_1,V_2,V_3)$ arises from $\ep((f\shuffle{g})\odot{h})-\ep(f\odot(g\shuffle{h}))$, if $\ep((f\shuffle{g})\odot{h})\neq\ep(f\odot(g\shuffle{h}))$.
 Note that $\ep((f\shuffle{g})\odot{h})-\ep(f\odot(g\shuffle{h}))$ is a linear combination of elements of \eqref{eq:varsC2} with integer coefficients and we have that $\ep((f\shuffle{g})\odot{h})\neq0$.
 In the following, we will show that every element of \eqref{eq:varsC2} arising from $\ep((f\shuffle{g})\odot{h})$ is strictly greater w.r.t.\ $\preceq$ than any element arising from $\ep(f\odot(g\shuffle{h}))$.
 Altogether, this allows to conclude that $\ep((f\shuffle{g})\odot{h})\neq\ep(f\odot(g\shuffle{h}))$ and that the greatest element of \eqref{eq:varsC2} appearing in $r(V_1,V_2,V_3)$ indeed arises from $\ep((f\shuffle{g})\odot{h})$.
\par
 First, from $V_1<_\mathrm{dlex}V_3$ and $|V_2|\ge1$, it follows that $|V_1|<|V_2|+|V_3|$.
 Hence, all $c(V,W)$ with $V\le_\mathrm{dlex}W$ arising from $\ep(f\odot(g\shuffle{h}))$ are such that $V=V_1$ and $W$ is a shuffle of $V_2$ and $V_3$, i.e.\ $|V|=|V_1|<|V_2|+|V_3|=|W|$.
 To complete the proof, we distinguish three cases to show that every element $c(V,W)$ of \eqref{eq:varsC2} arising from $\ep((f\shuffle{g})\odot{h})$ satisfies $V_1<_\mathrm{dlex}V$ and hence is strictly greater than any element arising from $\ep(f\odot(g\shuffle{h}))$.
\par
 If $|V_1|+|V_2|<|V_3|$, then all $c(V,W)$ with $V\le_\mathrm{dlex}W$ arising from $\ep((f\shuffle{g})\odot{h})$ are such that $W=V_3$ and $V$ is a shuffle of $V_1$ and $V_2$, i.e.\ $|V|=|V_1|+|V_2|<|V_3|=|W|$.
 Since $|V_2|\ge1$, this implies that $|V_1|<|V|$, i.e.\ $V_1<_\mathrm{dlex}V$.
 If $|V_1|+|V_2|=|V_3|$, then all $c(V,W)$ with $V\le_\mathrm{dlex}W$ arising from $\ep((f\shuffle{g})\odot{h})$ satisfy $|V|=|W|=|V_3|$.
 Since $|V_2|\ge1$, we have $|V_1|<|V|$ again.
 If $|V_1|+|V_2|>|V_3|$, then all $c(V,W)$ with $V\le_\mathrm{dlex}W$ arising from $\ep((f\shuffle{g})\odot{h})$ are such that $V=V_3$ and $W$ is a shuffle of $V_1$ and $V_2$, i.e.\ $|V|=|V_3|<|V_1|+|V_2|=|W|$.
 The general assumption $V_1<_\mathrm{dlex}V_3$ directly yields $V_1<_\mathrm{dlex}V$.
\end{proof}

If $\mathbb{Q}\subseteq\mathcal{C}$, then Lemma~\ref{lem:leadingmonomial} implies that the relation $r(V_1,V_2,V_3)$ allows to express the greatest $c(V,W)$ appearing in it by smaller elements of \eqref{eq:varsC2} modulo the ideal $J_2$, i.e.\ when working in $\mathcal{C}_2$.
Therefore, it is relevant to know if a given $c(V,W)$ arises as the greatest one in $r(V_1,V_2,V_3)$ for some $V_1,V_2,V_3\in\langle{I}\rangle^*$ with $V_1<_\mathrm{dlex}V_3$.
Theorem~\ref{thm:leadingmonomial} below answers precisely this question.
Before, we need to recall Lyndon words and discuss their relation with shuffling.
\par
Recall that $W \in \langle{I}\rangle$ is a Lyndon word if and only if it is not empty and all proper nonempty suffixes of $W$ are lexicographically greater than $W$, see e.g.\ \cite[p.~105]{ReutenauerFree}.
A characterization in terms of shuffles is that $W \in \langle{I}\rangle$ is a Lyndon word if and only if it is neither empty nor the (lexicographically) maximal shuffle of two nonempty words $W_1,W_2\in \langle{I}\rangle^*$.
This follows from Theorem~5.1.5 in \cite{Lothaire} together with Theorem~6.1.ii in \cite{ReutenauerFree}, for example, and will be used in the sequel.
A related characterization, which additionally imposes $W=W_1W_2$, is given e.g.\ as Corollary~6.2 in \cite{ReutenauerFree}.

\begin{theorem}\label{thm:leadingmonomial}
 Assume that $\mathcal{R}_\mathrm{T}$ is a free $\mathcal{C}$-module with basis $\{a_i\ |\ i\in{I}\}\subseteq\mathcal{R}_\mathrm{T}$.
 Let $\le$ be a total order on $I$.
 Consider the sets \eqref{eq:varsC2} and \eqref{eq:gensJ2} defined above and let $\preceq$ be the total order on \eqref{eq:varsC2} defined above.
 Let $V,W \in \langle{I}\rangle^*$ with $V\le_\mathrm{dlex}W$, then the following are equivalent.
 \begin{enumerate}
  \item There are $V_1,V_2,V_3\in\langle{I}\rangle^*$ with $V_1<_\mathrm{dlex}V_3$ such that $c(V,W)$ is the greatest element of \eqref{eq:varsC2} appearing in $r(V_1,V_2,V_3)$.
  \item $V$ is not a Lyndon word or $W$ is the maximal shuffle of some $W_1,W_2 \in \langle{I}\rangle^*$ with $W_1<_\mathrm{dlex}V$.
 \end{enumerate}
\end{theorem}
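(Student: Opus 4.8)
The plan is to combine Lemma~\ref{lem:leadingmonomial} with the stated characterization of Lyndon words via maximal shuffles, and to reduce everything to a short length analysis. By Lemma~\ref{lem:leadingmonomial}, for any triple with $V_1<_\mathrm{dlex}V_3$ the greatest element of \eqref{eq:varsC2} in $r(V_1,V_2,V_3)$ arises solely from the term $\ep((f\shuffle{g})\odot{h})$ with $f=\tau(V_1)$, $g=\tau(V_2)$, $h=\tau(V_3)$. Writing $\tau(V_1)\shuffle\tau(V_2)=\sum_{U\in\sh(V_1,V_2)}c_U\tau(U)$ with positive integers $c_U$, this term equals $\sum_U c_U\,\ep(\tau(U)\odot\tau(V_3))$; after reordering each factor, all the resulting generators have the same total length $|V_1|+|V_2|+|V_3|$, so their $\preceq$-comparison is the lexicographic comparison of the ordered pairs obtained from $(U,V_3)$. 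Since all words of $\sh(V_1,V_2)$ share the length $|V_1|+|V_2|$, the maximal shuffle $U^\star:=\max_{\le_\mathrm{dlex}}\sh(V_1,V_2)$ is well defined, and I would first establish that it controls this $\preceq$-maximum.

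Next I would run the elementary length analysis with $p:=|V_1|+|V_2|=|U|$ and $q:=|V_3|$. If $p<q$ then every $U$ is $<_\mathrm{dlex}V_3$ and the ordered pair is $(U,V_3)$; as the second coordinate is fixed, the $\preceq$-maximum is attained at $U=U^\star$, giving $c(U^\star,V_3)$. Symmetrically, if $p>q$ the maximum is $c(V_3,U^\star)$. In the tie case $p=q$ the first coordinate is $\min_{\le_\mathrm{dlex}}(U,V_3)$, which equals $V_3$ exactly when $U\ge_\mathrm{dlex}V_3$; hence the greatest generator is $c(V_3,U^\star)$ when $U^\star\ge_\mathrm{dlex}V_3$ and $c(U^\star,V_3)$ otherwise. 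In all cases the coefficient is a positive combination of the $c_U$, so the generator genuinely occurs, and it always has one of the two forms $c(U^\star,V_3)$ or $c(V_3,U^\star)$.

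For $(1)\Rightarrow(2)$ I would read off condition~(2) from these two forms. If the greatest generator is $c(U^\star,V_3)$, then $V=U^\star$ is the maximal shuffle of the two nonempty words $V_1,V_2$, so the cited characterization (a nonempty word is Lyndon iff it is not a maximal shuffle of two nonempty words) shows that $V$ is not a Lyndon word. If it is $c(V_3,U^\star)$, then $V=V_3$ and $W=U^\star$, and the admissibility condition $V_1<_\mathrm{dlex}V_3=V$ exhibits $W$ as the maximal shuffle of $V_1,V_2\in\langle I\rangle^*$ with $V_1<_\mathrm{dlex}V$, which is precisely the second disjunct of~(2).

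For the converse $(2)\Rightarrow(1)$ I would produce explicit triples and verify them through the same analysis. If $V$ is not a Lyndon word, the characterization gives nonempty $V_1,V_2$ with $V=\max_{\le_\mathrm{dlex}}\sh(V_1,V_2)=U^\star$; choosing $V_3:=W$ yields $V_1<_\mathrm{dlex}V_3$ (since $|V_1|<|V|\le|W|$), and because $V=U^\star\le_\mathrm{dlex}W=V_3$ the cases $p<q$ and $p=q$ return $c(U^\star,V_3)=c(V,W)$. If instead $W$ is the maximal shuffle of $W_1,W_2$ with $W_1<_\mathrm{dlex}V$, I set $V_1:=W_1$, $V_2:=W_2$, $V_3:=V$, so that $U^\star=W\ge_\mathrm{dlex}V=V_3$ and $V_1<_\mathrm{dlex}V_3$; then the cases $p>q$ and $p=q$ return $c(V_3,U^\star)=c(V,W)$. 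The main obstacle, and the only point demanding care, is the length-tie case $p=q$: there one must track precisely which of $U^\star,V_3$ is the smaller word under $\le_\mathrm{dlex}$ so that the ordered pair matches the form $c(V,W)$ with $V\le_\mathrm{dlex}W$ required by \eqref{eq:varsC2}; the strict-length cases are immediate.
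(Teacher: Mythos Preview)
Your proof is correct and follows essentially the same approach as the paper: both arguments invoke Lemma~\ref{lem:leadingmonomial} to localize the leading generator in $\ep((f\shuffle g)\odot h)$, identify it via the maximal shuffle $U^\star$ (the paper's $V_0$) of $V_1,V_2$, and then run the same constructions for the converse. Your explicit trichotomy on $p=|V_1|+|V_2|$ versus $q=|V_3|$ is a slightly more detailed reorganization of the paper's dichotomy $V_0\le_\mathrm{dlex}V_3$ versus $V_0>_\mathrm{dlex}V_3$, and your treatment of the second disjunct in $(2)\Rightarrow(1)$ avoids the paper's extra assumption that $V$ is Lyndon by absorbing the boundary case $U^\star=V_3$ into your tie-breaking rule; this is harmless since both constructions agree there.
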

\begin{proof}
 First, assume that $V_1,V_2,V_3\in\langle{I}\rangle^*$ with $V_1<_\mathrm{dlex}V_3$ are such that $c(V,W)$ is the greatest element of \eqref{eq:varsC2} appearing in $r(V_1,V_2,V_3)$.
 By Lemma~\ref{lem:leadingmonomial}, $c(V,W)$ arises from the term $\ep((f\shuffle{g})\odot{h})$, where $f:=\tau(V_1)$, $g:=\tau(V_2)$, and $h:=\tau(V_3)$.
 All elements $c(\tilde{V},\tilde{W})$ arising from this $\ep((f\shuffle{g})\odot{h})$ either have $\tilde{W}=V_3$ and $\tilde{V}$ is a shuffle of $V_1$ and $V_2$ or they have $\tilde{V}=V_3$ and $\tilde{W}$ is a shuffle of $V_1$ and $V_2$.
 Hence, there are two cases how $V$ and $W$ might be formed from $V_1,V_2,V_3$.
 If the maximal shuffle $V_0$ of $V_1$ and $V_2$ satisfies $V_0\le_\mathrm{dlex}V_3$, then all elements $c(\tilde{V},\tilde{W})$ arising from $\ep((f\shuffle{g})\odot{h})$ have $\tilde{W}=V_3$ and $\tilde{V}$ is a shuffle of $V_1$ and $V_2$.
 In particular, $V=V_0$ and $W=V_3$.
 So, $V$ is not a Lyndon word in this case.
 If the maximal shuffle $V_0$ of $V_1$ and $V_2$ satisfies $V_0>_\mathrm{dlex}V_3$, then all elements $c(\tilde{V},\tilde{W})$ arising from $\ep((f\shuffle{g})\odot{h})$ satisfy $\tilde{V}\le_\mathrm{dlex}V_3$.
 In particular, $V=V_3$ and $W=V_0$.
 So, $W$ is the maximal shuffle of $V_1$ and $V_2$ and we have $V_1<_\mathrm{dlex}V$.
\par
 For the converse, we again distinguish two cases.
 If $V$ is not a Lyndon word, then we can choose $V_1,V_2\in\langle{I}\rangle^*$ such that $V$ is the maximal shuffle of $V_1$ and $V_2$ and we set $V_3:=W$.
 From $V\le_\mathrm{dlex}W$ and $|V|=|V_1|+|V_2|$ it follows that $|V_1|<|W|$, i.e.\ $V_1<_\mathrm{dlex}V_3$.
 So, from the first part of the proof, we obtain that $c(V,W)$ is the greatest element of \eqref{eq:varsC2} appearing in $r(V_1,V_2,V_3)$.
 If $V$ is a Lyndon word and $W$ is the maximal shuffle of some $W_1,W_2 \in \langle{I}\rangle^*$ with $W_1<_\mathrm{dlex}V$, then we set $V_1:=W_1$, $V_2:=W_2$, and $V_3:=V$, which trivially satisfy $V_1<_\mathrm{dlex}V_3$.
 Since $V$ is not the maximal shuffle of any two nonempty words, we have $V<_\mathrm{dlex}W$, i.e.\ the maximal shuffle $V_0=W$ of $V_1$ and $V_2$ satisfies $V_0>_\mathrm{dlex}V_3$.
 So, from the first part of the proof, we obtain that $c(V,W)$ is the greatest element of \eqref{eq:varsC2} appearing in $r(V_1,V_2,V_3)$.
\end{proof}

For brevity, given a totally ordered set $(I,\le)$, we let
\begin{multline}\label{eq:gensC2}
 S:=\{(V,W)\in(\langle{I}\rangle^*)^2\ |\ \text{$V\le_\mathrm{dlex}W$, $V$ is a Lyndon word, and $W$ is not}\\
 \text{the maximal shuffle of some $W_1,W_2 \in \langle{I}\rangle^*$ with $W_1<_\mathrm{dlex}V$}\}.
\end{multline}
Altogether, this straightforwardly yields the following.

\begin{corollary}\label{cor:gensC2}
 Let $\mathbb{Q}\subseteq\mathcal{C}$ and let $\mathcal{R}_\mathrm{T}$ be generated as a $\mathcal{C}$-module by $\{a_i\ |\ i\in{I}\}\subseteq\mathcal{R}_\mathrm{T}$.
 Let $\le$ be a total order on $I$ and consider the sets \eqref{eq:varsC2} and \eqref{eq:gensC2} defined above.
 Then, $\mathcal{C}_2$ is generated as $\mathcal{C}$-algebra by the set $\{c(V,W)\ |\ (V,W)\in{S}\}$.
\end{corollary}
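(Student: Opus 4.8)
The plan is to first reduce to the case where $\mathcal{R}_\mathrm{T}$ is a free $\mathcal{C}$-module, so that Lemma~\ref{lem:leadingmonomial} and Theorem~\ref{thm:leadingmonomial} become available, and then to run a reduction argument that rewrites every generator $c(V,W)$ with $(V,W)\notin{S}$ in terms of $\preceq$-smaller generators. For the reduction to the free case, let $F:=\bigoplus_{i\in{I}}\mathcal{C}e_i$ be the free $\mathcal{C}$-module on $I$ with the surjection $p:F\to\mathcal{R}_\mathrm{T}$ sending $e_i\mapsto{a_i}$. Carrying out the construction of $\mathcal{C}_2$ with $F$ in place of $\mathcal{R}_\mathrm{T}$ yields a $\mathcal{C}$-algebra $\mathcal{C}_2^F$ to which the free-case results apply. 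Since $p$ is surjective and the induced maps on tensor modules respect both $\shuffle$ and $\odot$, they induce a surjective $\mathcal{C}$-algebra homomorphism $\Sym(\ep(M_2^F))\to\Sym(\ep(M_2))$ that carries each relation $r(V_1,V_2,V_3)$ built over $F$ to the corresponding relation built over $\mathcal{R}_\mathrm{T}$, which lies in $J_2$. Hence it descends to a surjective $\mathcal{C}$-algebra homomorphism $\Phi:\mathcal{C}_2^F\to\mathcal{C}_2$ mapping the generator indexed by $(V,W)$ over $F$ to $c(V,W)$, and in particular mapping $\{c(V,W)\ |\ (V,W)\in{S}\}$ onto the set of the same name in $\mathcal{C}_2$. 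As a surjective algebra homomorphism sends a generating set onto a generating set, it suffices to prove the claim when $\mathcal{R}_\mathrm{T}$ is free with basis $\{a_i\ |\ i\in{I}\}$.

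Now assume $\mathcal{R}_\mathrm{T}$ is free. The defining condition of $S$ in \eqref{eq:gensC2} is exactly the negation of condition~(2) in Theorem~\ref{thm:leadingmonomial}; hence $(V,W)\notin{S}$ holds precisely when $c(V,W)$ is the $\preceq$-greatest element of \eqref{eq:varsC2} appearing in some relation $r(V_1,V_2,V_3)$ with $V_1<_\mathrm{dlex}V_3$. For such $(V,W)$, Lemma~\ref{lem:leadingmonomial} guarantees that $c(V,W)$ occurs in $r(V_1,V_2,V_3)$ linearly, with a nonzero integer coefficient, and is not multiplied by any other element of \eqref{eq:varsC2}; since $\mathbb{Q}\subseteq\mathcal{C}$, this coefficient is invertible. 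Because $\{a_i\ |\ i\in{I}\}$ is a basis, $J_2$ is generated by \eqref{eq:gensJ2}, so the relation $r(V_1,V_2,V_3)\equiv0$ in $\mathcal{C}_2$ lets us solve for $c(V,W)$ as a $\mathcal{C}$-polynomial in generators strictly smaller than $c(V,W)$ with respect to $\preceq$.

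It remains to iterate this rewriting and argue termination, which I expect to be the main obstacle, because $\preceq$ need not be a well-order on all of \eqref{eq:varsC2} when $I$ is infinite. The key observation is that every word produced by the reduction of a fixed $c(V,W)$ uses only the finitely many letters occurring in $V$ and $W$: in both cases of the converse direction of Theorem~\ref{thm:leadingmonomial}, the chosen $V_1,V_2,V_3$ are built from these letters by shuffling and selection, and the remaining terms of $r(V_1,V_2,V_3)$ inherit the same letters. Restricting to this finite sub-alphabet $I'\subseteq{I}$, the order $\preceq$ is a genuine well-order on the generators \eqref{eq:varsC2} over $I'$, since each value of $|V|+|W|$ is attained by only finitely many of them.

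I would then conclude by well-founded induction along $\preceq$ over $I'$. Each $c(V,W)$ over $I'$ either satisfies $(V,W)\in{S}$, in which case it is itself one of the claimed generators, or else $(V,W)\notin{S}$, in which case the reduction step of the previous paragraph expresses it as a $\mathcal{C}$-polynomial in strictly $\preceq$-smaller generators over $I'$; by the induction hypothesis these already lie in the $\mathcal{C}$-subalgebra generated by $\{c(V,W)\ |\ (V,W)\in{S}\}$, and hence so does $c(V,W)$. The base of the induction, the $\preceq$-minimal generator over $I'$, falls into the first case and is therefore covered. Since every $c(V,W)$ involves only finitely many letters, this shows that all $c(V,W)$ lie in the subalgebra generated by $\{c(V,W)\ |\ (V,W)\in{S}\}$. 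As these $c(V,W)$ generate $\mathcal{C}_2$ as a $\mathcal{C}$-algebra, the claim follows.
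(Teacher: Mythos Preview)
Your proof is correct and follows essentially the same approach as the paper: both reduce to the free case via the canonical surjection from a free module on $I$, then use Lemma~\ref{lem:leadingmonomial} and Theorem~\ref{thm:leadingmonomial} to rewrite each $c(V,W)$ with $(V,W)\notin S$ in terms of $\preceq$-smaller generators modulo $J_2$. Your termination argument via restriction to finite sub-alphabets is more explicit than the paper's, which simply asserts the resulting decomposition $\Sym(\ep(\widetilde{M}_2))=\Sym(\linspan_\mathcal{C}\{\tilde{c}(V,W)\mid(V,W)\in S\})+\tilde{J}_2$ as a consequence of Theorem~\ref{thm:leadingmonomial}.
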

\begin{proof}
 Let $\widetilde{\mathcal{R}}_\mathrm{T}$ be the free $\mathcal{C}$-module generated by $\{a_i\ |\ i\in{I}\}$.
 Let the $\mathcal{C}$-module $\widetilde{M}_2:=\big(\bigoplus_{n=1}^\infty \widetilde{\mathcal{R}}_\T^{\otimes{n}}\big)^{\odot2}$ be defined analogously to \eqref{eq:DefM2} and let $\ep(\widetilde{M}_2)$ be an isomorphic copy.
 Analogously to $J_1\subseteq\Sym(\ep(M_2))$, let the ideal $\tilde{J}_1\subseteq\Sym(\ep(\widetilde{M}_2))$ be generated by the elements \eqref{eq:ConstRel} in $\Sym(\ep(\widetilde{M}_2))$.
 For words $V,W\in\langle{I}\rangle^*$ with $V\le_\mathrm{dlex}W$, we define $\tilde{c}(V,W):=\ep((a_{v_1}\otimes\dots\otimes{a_{v_{|V|}}})\odot(a_{w_1}\otimes\dots\otimes{a_{w_{|W|}}})) \in \ep(\widetilde{M}_2)$ analogously to $c(V,W)\in\ep(M_2)$.
 By Theorem~\ref{thm:leadingmonomial}, we have that
 \[
  \Sym(\ep(\widetilde{M}_2))=\Sym(\linspan_\mathcal{C}\{\tilde{c}(V,W)\ |\ (V,W)\in{S}\})+\tilde{J}_1.
 \]
 The canonical $\mathcal{C}$-module epimorphism $\widetilde{\mathcal{R}}_\mathrm{T}\to\mathcal{R}_\mathrm{T}$ yields a canonical $\mathcal{C}$-module epimorphism $\psi:\ep(\widetilde{M}_2)\to\ep(M_2)$ mapping each $\tilde{c}(V,W)$ to the corresponding $c(V,W)$.
 The latter epimorphism extends uniquely to a $\mathcal{C}$-algebra epimorphism $\psi:\Sym(\ep(\widetilde{M}_2))\to\Sym(\ep(M_2))$.
 In particular, $\psi$ hence maps $\tilde{J}_1$ onto $J_1$.
 Therefore, by applying $\psi$ to the above identity, we obtain
 \[
  \Sym(\ep(M_2))=\Sym(\linspan_\mathcal{C}\{c(V,W)\ |\ (V,W)\in{S}\})+J_1.
 \]
 Taking the quotient by $J_1$ concludes the proof.
\end{proof}

We conjecture that the statement of Corollary~\ref{cor:gensC2} remains true when generation is replaced by free generation.
\begin{conjecture}\label{conj:freeC2}
 Let $\mathbb{Q}\subseteq\mathcal{C}$ and let $\mathcal{R}_\mathrm{T}$ be freely generated by $\{a_i\ |\ i\in{I}\}\subseteq\mathcal{R}_\mathrm{T}$ as a $\mathcal{C}$-module.
 Let $\le$ be a total order on $I$ and consider the sets \eqref{eq:varsC2} and \eqref{eq:gensC2} defined above.
 Then, $\mathcal{C}_2$ is freely generated as commutative $\mathcal{C}$-algebra by the set $\{c(V,W)\ |\ (V,W)\in{S}\}$.
\end{conjecture}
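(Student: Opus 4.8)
The plan is to upgrade the generation statement of Corollary~\ref{cor:gensC2} to algebraic independence over $\mathcal{C}$. Since $\mathcal{R}_\mathrm{T}$ is free, $P:=\Sym(\ep(M_2))$ is the polynomial ring over $\mathcal{C}$ in the variables $c(V,W)$ with $V\le_\mathrm{dlex}W$, and $\mathcal{C}_2=P/J_2$ with $J_2$ generated by the relations \eqref{eq:gensJ2}. I would extend the total order $\preceq$ on these variables to a monomial order on $P$ and reformulate the claim as a Gröbner-basis statement: the goal is to show that the monomials in the variables $\{c(V,W)\mid (V,W)\in S\}$ form a $\mathcal{C}$-basis of $\mathcal{C}_2$, which together with surjectivity gives free generation.

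First I would exploit Lemma~\ref{lem:leadingmonomial} and Theorem~\ref{thm:leadingmonomial}: each relation $r(V_1,V_2,V_3)$ has as its $\preceq$-leading term a single variable $c(V,W)$, occurring linearly with an integer coefficient, and the set of variables arising this way is exactly $\{c(V,W)\mid (V,W)\notin S\}$. Since $\mathbb{Q}\subseteq\mathcal{C}$, these integer coefficients are invertible, so each such relation can be made monic. The idea is then to single out, for each $(V,W)\notin S$, one canonical triple $(V_1,V_2,V_3)$, reading off the factorizations from the proof of Theorem~\ref{thm:leadingmonomial} (if $V$ is not a Lyndon word, write $V$ as the maximal shuffle of two nonempty words and take $V_3=W$; otherwise take $V_3=V$ and let $W$ be the maximal shuffle of the other two). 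This produces a subset $G\subseteq J_2$ whose leading terms are pairwise distinct, hence pairwise coprime, single variables. By Buchberger's coprimality criterion such a set is automatically a Gröbner basis, and its standard monomials are precisely the monomials in the $S$-variables.

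The decisive step, which I expect to be the main obstacle, is to prove that this triangular set generates the full ideal, i.e.\ $(G)=J_2$; equivalently, that every relation $r(V_1,V_2,V_3)$ reduces to zero modulo $G$. This is exactly the coherence condition verified by computer for the cyclic case up to $d=60$ (checking that $(G_d)$ equals the ideal of all relations \eqref{eq:ConstRel1}). I would attempt it by induction on the $\preceq$-value of the leading term: reduce each relation by the canonical one sharing its leading variable, and show that the resulting difference — which now involves only strictly smaller variables — is again a $P$-combination of relations, hence reduces further. The natural source of the identities needed to close the induction is the four-fold product $\E\sigma(f)\sigma(g)\sigma(h)\sigma(k)$: expanding it by two nested applications of \eqref{eq:IDRShuffle} in different bracketings yields relations among the three-fold relations $r$, playing the role of the S-polynomial reductions. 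Carrying this out in full generality means controlling the intricate shuffle combinatorics together with the Lyndon-word case distinctions of Theorem~\ref{thm:leadingmonomial}, which is why the statement is posed only as a conjecture.

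An alternative route that would bypass the confluence analysis is to exhibit a concrete integro-differential realization: a differential homomorphism $\varphi\colon\mathcal{R}\to\mathcal{S}$ whose induced map $\tilde{\eta}_2\colon\mathcal{C}_2\to\const_\Der(\mathcal{S})$ from the proof of Theorem~\ref{thm:universal} sends the $S$-generators to algebraically independent constants, since independence downstream would force it upstream. However, constructing such an $\mathcal{S}$ and verifying the independence of the corresponding iterated-integral evaluations appears no easier than the combinatorial approach, so I expect the Gröbner-basis confluence to be the more tractable line of attack.
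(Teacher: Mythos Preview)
The statement is a \emph{conjecture} in the paper, not a theorem: the paper does not prove it. The only evidence the paper offers is computational verification of truncated versions (up to weight~$6$ for arbitrary rank, and up to weights $60$, $12$, $9$ for ranks $1$, $2$, $3$), carried out precisely by the triangular-$G$ strategy you describe: select one canonical relation per non-$S$ variable and then check that the remaining relations lie in $(G)$.

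Your proposal is therefore not a proof either, and you are aware of this: you correctly identify the equality $(G)=J_2$ as the crux and acknowledge that closing the induction on the reduction step is the open problem. Your sketch of using four-fold products $\E\sigma(f)\sigma(g)\sigma(h)\sigma(k)$ to generate syzygies among the three-fold relations is a natural idea, but the paper gives no indication that this succeeds, and neither do you; the combinatorial control over which $c(V,W)$ survive after cancellation is exactly what is missing. The alternative route via a realizing integro-differential ring $\mathcal{S}$ is also reasonable in principle, and your assessment that it is no easier matches the state of the paper. In short: your plan matches the paper's own framing of the problem and its computational approach, but the decisive step remains open in both.
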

Using computer algebra software, we have computationally verified restricted versions of this conjecture where $\ep(M_2)$ is truncated at finite {\lq\lq}weight{\rq\rq}.
For example, for free $\mathcal{R}_\mathrm{T}$ of arbitrary rank, we have verified the conjecture up to weight $6$, i.e.\ that the quotient of $\Sym(\ep(\bigoplus_{w=2}^6\bigoplus_{i=1}^{\lfloor{w/2}\rfloor}\mathcal{R}_\mathrm{T}^{\otimes{i}}\odot\mathcal{R}_\mathrm{T}^{\otimes{w-i}}))$ by the ideal generated by all $r(V_1,V_2,V_3)$ with $|V_1|+|V_2|+|V_3|\le6$ is indeed freely generated by those $c(V,W)$ with $|V|+|W|\le6$ which satisfy $(V,W)\in{S}$.
This verification utilizes the polynomial ring generated over $\mathbb{Q}$ by indeterminates $c(V,W)$ with $|V|+|W|\le6$ and $|I|=6$, where we order these $134499$ indeterminates by $\preceq$, just like \eqref{eq:varsC2} above.
Any $r(V_1,V_2,V_3)$ with $|V_1|+|V_2|+|V_3|\le6$ canonically is considered an element of this ring by replacing the free generators $c(V,W)$ of $\ep(M_2)$ by the corresponding indeterminates $c(V,W)$.
In analogy to \eqref{eq:ConstRel2}, we choose a maximal subset $G \subseteq \{r(V_1,V_2,V_3)\ |\ |V_1|+|V_2|+|V_3|\le6\}$ such that each $c(V,W)$ arises at most for one element of $G$ as the maximal indeterminate appearing within that element of $G$.
By Lemma~\ref{lem:leadingmonomial} and Theorem~\ref{thm:leadingmonomial}, it follows that the polynomial subring generated by indeterminates $c(V,W)$ with $(V,W)\in{S}$ and $|V|+|W|\le6$ is a direct complement of the ideal generated by $G$.
Then, it suffices to verify computationally that all remaining $r(V_1,V_2,V_3)$ with $|V_1|+|V_2|+|V_3|\le6$ lie in the ideal generated by $G$.
Altogether, this computation took 21 hours on a laptop, consuming 93 hours of CPU-time.
Analogously, for free $\mathcal{R}_\mathrm{T}$ of rank $1$, $2$, and $3$, we have verified the conjecture up to weight $60$, $12$, and $9$, respectively.

\section{Imposing additional relations of constants}
\label{sec:Ext}

Having a free commutative integro-differential ring as presented in the previous section, one can obtain further integro-differential rings by forming quotients by integro-differential ideals.
In differential algebra, differential ideals are well-known as ideals that are closed under derivation.
Integro-differential ideals are defined as ideals that are closed under both $\Der$ and $\Int$.
In general, any (ring) ideal generated by constants in any integro-differential ring is automatically an integro-differential ideal, since integration $\Int$ is linear over constants.
We will use this fact implicitly in the following.

When factoring a differential ring $(\mathcal{R},\Der)$ by a differential ideal $J\subseteq\mathcal{R}$,  a differential ring $(\mathcal{R}/J,\tilde{\Der})$ is obtained where all constants in $\mathcal{R}$ give rise to (not necessarily distinct) constants in the quotient.
However, not all constants of the quotient differential ring necessarily arise this way, as can be illustrated by differential polynomials $\mathcal{R}=\mathcal{C}\{y\}$ and the differential ideal $J$ generated by $\Der{y}$, which yields a quotient consisting only of constants while $y-c\not\in J$ for all $c\in\mathcal{C}$.

For integro-differential rings $(\mathcal{R},\Der,\Int)$, this is different.
Since integro-differential rings form a variety as mentioned in Section~\ref{sec:prelim}, the quotient $(\mathcal{R}/J,\tilde{\Der},\tilde{\Int})$ by an integro-differential ideal $J\subseteq\mathcal{R}$ is again an integro-differential ring and the canonical map $\pi:\mathcal{R}\to\mathcal{R}/J$ is an integro-differential ring homomorphism.
In particular, this implies that all constants in the quotient arise from constants of the original ring, since $\const_{\tilde{\Der}}(\mathcal{R}/J) = \tilde{\E}\pi(\mathcal{R}) = \pi(\E\mathcal{R})$.
Moreover, every integro-differential ring homomorphism $\psi:\mathcal{R}\to\mathcal{S}$ with $J\subseteq\ker(\psi)$ induces an integro-differential ring homomorphism $\bar{\psi}:\mathcal{R}/J\to\mathcal{S}$ and every integro-differential ring homomorphism $\bar{\psi}:\mathcal{R}/J\to\mathcal{S}$ arises this way.

In applications, one often has an explicitly computable quasi-integration $\Q:\mathcal{R}\to\mathcal{R}$ in addition to the differential ring $(\mathcal{R},\Der)$.
In such cases, a natural requirement for the integration defined on the integro-differential closure is to reproduce the antiderivatives that already exist in $\mathcal{R}$ and are given by $\Q$, i.e.\ $\Int\iota(f)=\iota(\Q{f})$ for all $f\in\Der\mathcal{R}$.
Obviously, this could be achieved by factoring $\mathrm{IDR}(\mathcal{R})$ by the integro-differential ideal generated by these relations.
Instead of factoring, a more direct approach can be taken by slightly modifying the construction of $\mathrm{IDR}(\mathcal{R})$ given in Section~\ref{sec:free}, which we will detail below in Section~\ref{sec:ExtQIDR}.
For $f\in\Der\mathcal{R}$, we have $\Der\Q{f}=f$ and hence $\Int\iota(f)=\iota(\Q{f})$ is equivalent to $\ep(\Q{f})=0$ by \eqref{eq:DefInt}.
So, the above integro-differential ideal is in fact generated by constants
\begin{equation}\label{eq:constgenerators}
 \mathcal{C}\otimes\ep(\Q\Der\mathcal{R})\otimes\mathcal{C}\otimes\mathcal{C}\et \subseteq \bar{\mathcal{C}}.
\end{equation}

More generally, if in a concrete setting additional information can be inferred about how evaluation interacts with multiplication of nested integrals modelled by $\mathcal{T}$, we may want to factor out other new constants in $\bar{\mathcal{C}}\setminus\iota(\mathcal{C})$ introduced by the construction of the free object $\mathrm{IDR}(\mathcal{R})$.
For instance, imposing multiplicativity of evaluation is explained in Remark~\ref{rem:IDA} below.
Later, in Section~\ref{sec:closure}, we treat the case that evaluation is fully determined by a given integro-differential ring $(\mathcal{S},\Der,\Int)$ that has $(\mathcal{R},\Der)$ as differential subring of $(\mathcal{S},\Der)$.

Evidently, we can compute in the quotient of $\mathrm{IDR}(\mathcal{R})$ by an arbitrary integro-differential ideal by choosing representatives in $\mathrm{IDR}(\mathcal{R})$ and modifying all operations accordingly.
However, in the special situation treated by the following lemma, all operations except integration can remain exactly the same.
This situation arises for example when, on a complemented $\mathcal{C}$-submodule $N_1 \subseteq \ep(M_1)$, we can assign values via a $\mathcal{C}$-module homomorphism $\varphi_1:N_1\to\mathcal{C}$, which, in the lemma below, yields $C=\{c-\varphi_1(c)\ |\ c\in{N}\}$ and $\mathcal{D}$ generated by a complement of $N_1$.

\begin{lemma}\label{lem:FactorConst}
 Let $C,\mathcal{D} \subseteq \mathcal{C}_1$ s.t.\ $\mathcal{D}$ is a subalgebra and the ideal $(C)$ satisfies $\mathcal{C}_1=(C)\oplus\mathcal{D}$.
 Let $\bar{\pi}:=\id_{\mathcal{R}}\otimes\pi_{\mathcal{D}}\otimes\id_{\mathcal{C}_2\otimes\mathcal{T}}$ where $\pi_{\mathcal{D}}:\mathcal{C}_1\to\mathcal{D}$ is the canonical projection.
 Let $J_1 \subseteq \mathrm{IDR}(\mathcal{R})$ be the integro-differential ideal generated by $\{1\otimes{c}\otimes1\otimes\et\ |\ c\in{C}\}$ and let $\pi:\mathrm{IDR}(\mathcal{R})\to\mathrm{IDR}(\mathcal{R})/J_1$ be the canonical projection.
 Then, 
 \begin{enumerate}
  \item $(\mathcal{R}\otimes\mathcal{D}\otimes\mathcal{C}_2\otimes\mathcal{T},\Der,\bar{\pi}\circ\Int)$ is an integro-differential ring and
  \item $\psi:\mathrm{IDR}(\mathcal{R})/J_1\to\mathcal{R}\otimes\mathcal{D}\otimes\mathcal{C}_2\otimes\mathcal{T}$ defined by $\psi(\pi(f)):=\bar{\pi}(f)$ is a bijective integro-differential ring homomorphism.
 \end{enumerate}
\end{lemma}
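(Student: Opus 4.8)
The plan is to realize $\mathcal{A}:=\mathcal{R}\otimes\mathcal{D}\otimes\mathcal{C}_2\otimes\mathcal{T}$ as a $\mathcal{C}$-subalgebra of $\mathrm{IDR}(\mathcal{R})$ that is complementary to the ideal $J_1$, so that $\bar{\pi}$ becomes the projection onto $\mathcal{A}$ along $J_1$; then I would transport the integro-differential structure of $\mathrm{IDR}(\mathcal{R})/J_1$ across the resulting isomorphism rather than verifying the axioms for $\bar{\pi}\circ\Int$ by hand.

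First I would pin down $J_1$ as the kernel of $\bar{\pi}$. Since $\pi_{\mathcal{D}}\colon\mathcal{C}_1\to\mathcal{D}$ is the projection with $\ker(\pi_{\mathcal{D}})=(C)$ and $(C)$ is complemented by $\mathcal{D}$ in $\mathcal{C}_1$, distributivity of the tensor product over the split decomposition $\mathcal{C}_1=(C)\oplus\mathcal{D}$ yields both $\ker(\bar{\pi})=\mathcal{R}\otimes(C)\otimes\mathcal{C}_2\otimes\mathcal{T}$ and the $\mathcal{C}$-module decomposition $\mathrm{IDR}(\mathcal{R})=\ker(\bar{\pi})\oplus\mathcal{A}$. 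The generators $1\otimes c\otimes1\otimes\et$ of $J_1$ lie in $\ker(\bar{\pi})$, so $J_1\subseteq\ker(\bar{\pi})$; conversely every spanning element $f_0\otimes(cd)\otimes u$ of $\mathcal{R}\otimes(C)\otimes\mathcal{C}_2\otimes\mathcal{T}$ (with $c\in C$, $d\in\mathcal{C}_1$, $u\in\mathcal{C}_2\otimes\mathcal{T}$) equals $(1\otimes c\otimes1\otimes\et)\cdot(f_0\otimes d\otimes u)\in J_1$, giving $\ker(\bar{\pi})\subseteq J_1$. Hence $J_1=\ker(\bar{\pi})$; recall that $J_1$ is generated by constants, so the integro-differential ideal coincides with this ring ideal, as noted before the lemma.

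Next I would check that $\mathcal{A}$ is a $\mathcal{C}$-subalgebra of $\mathrm{IDR}(\mathcal{R})$. Since $\mathcal{D}$ is a subalgebra of $\mathcal{C}_1$ (and contains the unit), $\mathcal{R}\otimes\mathcal{D}\otimes(\mathcal{C}_2\otimes\mathcal{T})$ is closed under the multiplication, which acts factorwise on $\mathcal{R}$ and $\mathcal{C}_1$ and via \eqref{eq:GeneralizedShuffle} on $\mathcal{C}_2\otimes\mathcal{T}$; moreover \eqref{eq:DefDeriv} leaves the $\mathcal{C}_1$-component untouched, so $\Der$ maps $\mathcal{A}$ into itself. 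With $\mathcal{A}$ a subalgebra, $J_1$ an ideal, and $\mathrm{IDR}(\mathcal{R})=\mathcal{A}\oplus J_1$, the projection $\bar{\pi}$ onto $\mathcal{A}$ along $J_1$ is a ring homomorphism: writing $x=a+j$ and $y=a'+j'$ with $a,a'\in\mathcal{A}$ and $j,j'\in J_1$, one has $xy=aa'+(aj'+ja'+jj')$ with $aa'\in\mathcal{A}$ and the remaining terms in $J_1$, whence $\bar{\pi}(xy)=aa'=\bar{\pi}(x)\bar{\pi}(y)$, and $\bar{\pi}(1)=1$ as $1\in\mathcal{A}$. Consequently $\psi$ is well defined (because $J_1=\ker(\bar{\pi})$) and is precisely the ring isomorphism $\mathrm{IDR}(\mathcal{R})/J_1\to\mathcal{A}$ induced by $\bar{\pi}$, with inverse $\pi|_{\mathcal{A}}$.

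Finally I would transport the structure. As $J_1$ is an integro-differential ideal, $\mathrm{IDR}(\mathcal{R})/J_1$ is an integro-differential ring and $\pi$ is an integro-differential homomorphism; write $\tilde{\Der},\tilde{\Int}$ for its induced operations. For $a\in\mathcal{A}$ we have $\psi^{-1}(a)=\pi(a)$, so $\psi\tilde{\Der}\psi^{-1}(a)=\bar{\pi}(\Der a)=\Der a$ (using $\Der a\in\mathcal{A}$) and $\psi\tilde{\Int}\psi^{-1}(a)=\bar{\pi}(\Int a)$; that is, the operations transported to $\mathcal{A}$ are exactly $\Der$ and $\bar{\pi}\circ\Int$. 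Since transporting an integro-differential ring along a ring isomorphism again yields an integro-differential ring respecting all operations, claim~(1) follows and $\psi$ is a bijective integro-differential ring homomorphism, giving claim~(2). I expect the only genuinely delicate point to be the identification $J_1=\ker(\bar{\pi})$ with the attendant direct-sum decomposition; once that is in place, the rest is formal transport of structure and needs no separate verification of the integro-differential axioms for $\bar{\pi}\circ\Int$.
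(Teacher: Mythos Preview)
Your proposal is correct and follows essentially the same route as the paper: both identify $J_1=\ker(\bar{\pi})=\mathcal{R}\otimes(C)\otimes\mathcal{C}_2\otimes\mathcal{T}$, observe that $\bar{\pi}$ is a ring homomorphism onto $\mathcal{A}$ and that $\Der$ preserves $\mathcal{A}$, and then transport the integro-differential structure of the quotient (the paper phrases this as integro-differential rings forming a variety). The only cosmetic difference is that the paper deduces $\bar{\pi}$ is a ring homomorphism directly from $\pi_{\mathcal{D}}$ being one, whereas you verify it via the subalgebra--plus--ideal decomposition of $\mathrm{IDR}(\mathcal{R})$; both are valid.
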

\begin{proof}
 By assumption, the canonical projection $\pi_{\mathcal{D}}:\mathcal{C}_1\to\mathcal{D}$ is a $\mathcal{C}$-algebra homomorphism.
 Hence, $\bar{\pi}$ is a surjective $\mathcal{C}$-algebra homomorphism as well.
 By construction, we have $\ker(\bar{\pi})=\mathcal{R}\otimes(C)\otimes\mathcal{C}_2\otimes\mathcal{T}$, which is generated as an ideal by $\{1\otimes{c}\otimes1\otimes\et\ |\ c\in{C}\} \subseteq \bar{\mathcal{C}}$, and hence is closed under $\Der$ and $\Int$, i.e.\ $\ker(\bar{\pi})=J_1$.
 Therefore, $(\bar{\pi}(\mathrm{IDR}(\mathcal{R})),\bar{\pi}\circ\Der,\bar{\pi}\circ\Int)$ is an integro-differential ring and $\bar{\pi}$ is an integro-differential ring homomorphism, since integro-differential rings form a variety.
\begin{center}
 \begin{tikzpicture}
  \node at (0,0) (r) {$\mathcal{R}$};
  \node at (4,0) (gq) {$\mathcal{R}\otimes\mathcal{D}\otimes\mathcal{C}_2\otimes\mathcal{T}$};
  \node at (0,-2) (g) {$\mathrm{IDR}(\mathcal{R})$};
  \node at (4,-2) (gj) {$\mathrm{IDR}(\mathcal{R})/J_1$};
  \draw (r) edge[->] node[above] {$\iota$} (gq);
  \draw (r) edge[->] node[left] {$\iota$} (g);
  \draw (g) edge[->] node[above] {$\pi$} (gj);
  \draw (gj) edge[->] node[right] {$\psi$} (gq);
  \draw (g) edge[->] node[above] {$\bar{\pi}$} (gq);
 \end{tikzpicture}
\end{center}
 Since $\mathcal{R}\otimes\mathcal{C}\otimes\mathcal{C}\otimes\mathcal{T}$ is closed under $\Der$ and $\bar{\pi}$ acts as the identity on it, $\bar{\mathcal{C}}$-linearity of $\Der$ implies $\bar{\pi}\circ\Der=\Der$ on $\bar{\pi}(\mathrm{IDR}(\mathcal{R}))$.
 Factoring by $\ker(\bar{\pi})=J_1$, we obtain a bijective integro-differential ring homomorphism $\psi:\mathrm{IDR}(\mathcal{R})/J_1\to\mathcal{R}\otimes\mathcal{D}\otimes\mathcal{C}_2\otimes\mathcal{T}$ satisfying $\psi(\pi(f))=\bar{\pi}(f)$.
\end{proof}

Note that in general, i.e.\ if the effect of factoring out constants is not essentially limited to $\mathcal{C}_1$ as in the lemma, the multiplication \eqref{eq:GeneralizedShuffle} on $\mathcal{C}\otimes\mathcal{C}_1\otimes\mathcal{C}_2\otimes\mathcal{T}$ will need to be adapted as well.
In particular, this happens not only in the situation treated in Section~\ref{sec:closure}, but also when we recover the construction of the free integro-differential algebra (with weight $0$) presented in \cite{GuoRegensburgerRosenkranz} as follows.
\begin{remark}\label{rem:IDA}
 To obtain the free integro-differential algebra $\mathrm{ID}(\mathcal{R})^*$ of \cite{GuoRegensburgerRosenkranz}, we additionally need to impose multiplicativity of evaluation.
 Explicitly, it suffices to factor out from $\mathrm{IDR}(\mathcal{R})$ the ideal generated by the following constants.
 \[
  \{1\otimes(\ep(\Q\Der{fg})-\ep(f)\ep(g))\otimes1\otimes\et\ |\ f,g \in \mathcal{R}_\mathrm{J}\}, \quad\{1\otimes\ep(f\otimes{t})\otimes1\otimes\et\ |\ f \in \mathcal{R}_\mathrm{J},t \in \widetilde{\mathcal{T}}\},
 \]
 \[
  \{1\otimes1\otimes\ep(s\odot{t})\otimes\et\ |\ s,t \in \widetilde{\mathcal{T}}\}
 \]
 Effectively, this eliminates the involved definition of $(\mathcal{C}_2\otimes\mathcal{T},+,\cdot)$ from the construction of $\mathrm{IDR}(\mathcal{R})$, as it corresponds to replacing $(\mathcal{C}_2\otimes\mathcal{T},+,\cdot)$ by the shuffle algebra $(\mathcal{T},+,\shuffle)$ in addition to replacing $\mathcal{C}_1$ by an isomorphic copy of $\mathcal{R}$.
 We note that the necessary requirement of $\mathcal{R}$ being commutative is not explicitly mentioned in \cite[Sec.~4.2]{GuoRegensburgerRosenkranz} when constructing $\mathrm{ID}(\mathcal{R})^*$.
\end{remark}

\subsection{The integro-differential closure of a commutative quasi-integro-differential ring}
\label{sec:ExtQIDR}

In Section~\ref{sec:free}, we considered a commutative differential ring that admits a quasi-integration and we constructed the free commutative integro-differential ring on it.
In the construction below, we consider a commutative differential ring with a distinguished quasi-integration instead and we want to retain the integrals given by it.
To this end, we introduce the concept of a (not necessarily commutative) quasi-integro-differential ring.
Note that the following definition and lemma are a generalization of Definition~\ref{def:IDR} and Lemma~\ref{lem:IDRvariety}, no longer requiring the derivation to be surjective and turning integration into quasi-integration.

\begin{definition}\label{def:quasiIDR}
 Let $(\mathcal{R},\Der)$ be a differential ring with ring of constants $\mathcal{C}$.
 Let $\Q:\mathcal{R}\to\mathcal{R}$ be a quasi-integration.
 Then, $(\mathcal{R},\Der,\Q)$ is called a \emph{quasi-integro-differential ring} and we denote the two induced projectors onto the kernels of $\Der$ and $\Q$ by
 \[
  \E:=\id-\Q\Der \quad\text{and}\quad \T:=\id-\Der\Q.
 \]
\end{definition}
Quasi-integro-differential rings form a variety based on the following lemma whose proof is exactly as the proof of Lemma~\ref{lem:IDRvariety}.
\begin{lemma}\label{lem:QIDRvariety}
 Let $(\mathcal{R},\Der)$ be a differential ring with constants $\mathcal{C}$.
 Let $\Q:\mathcal{R}\to\mathcal{R}$ be a map such that $\Der\Q\Der=\Der$ and $\Q\Der\Q=\Q$ and define $\E:=\id-\Q\Der$.
 Then, $\Q$ is $\mathcal{C}$-linear if and only if it is additive and satisfies $(\E{g})\Q{f}=\Q(\E{g})f$ and $(\Q{f})\E{g}=\Q{f}\E{g}$ for all $f,g\in\mathcal{R}$.
\end{lemma}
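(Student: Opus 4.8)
The plan is to mirror the proof of Lemma~\ref{lem:IDRvariety} essentially verbatim, since the only property of integration used there was that the induced operator $\E$ takes values in the ring of constants $\mathcal{C}$ and restricts to the identity on $\mathcal{C}$, and both facts persist when the stronger hypothesis $\Der\Int=\id$ is relaxed to $\Der\Q\Der=\Der$.

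First I would record the two consequences of $\Der\Q\Der=\Der$ that drive everything. Computing $\Der\E=\Der(\id-\Q\Der)=\Der-\Der\Q\Der=\Der-\Der=0$ shows $\E{g}\in\ker(\Der)=\mathcal{C}$ for every $g\in\mathcal{R}$; this is exactly what makes the two displayed identities into instances of $\mathcal{C}$-linearity. Moreover, for $c\in\mathcal{C}$ we have $\Der{c}=0$, so $\E{c}=c-\Q\Der{c}=c-\Q{0}=c$, where $\Q{0}=0$ by additivity of $\Q$; hence $\E$ acts as the identity on $\mathcal{C}$.

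For the forward implication I would assume $\Q$ is a $\mathcal{C}$-bimodule homomorphism (in particular additive) and simply substitute the constant $\E{g}$ into the module laws: left $\mathcal{C}$-linearity gives $\Q((\E{g})f)=(\E{g})\Q{f}$ and right $\mathcal{C}$-linearity gives $\Q(f\E{g})=(\Q{f})\E{g}$, which are precisely the two asserted identities. For the converse I would assume $\Q$ additive together with the two identities and specialize $g$ to an arbitrary $c\in\mathcal{C}$; since $\E{c}=c$ by the computation above, the first identity collapses to $\Q(cf)=c\Q{f}$ and the second to $\Q(fc)=(\Q{f})c$, so $\Q$ respects both left and right scalar multiplication by constants, which together with additivity is $\mathcal{C}$-linearity.

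The step I expect to be the crux is the opening observation $\Der\E=0$: it is what guarantees that $\E$ still lands in $\mathcal{C}$ even though $\Der$ need no longer be surjective, and it uses only $\Der\Q\Der=\Der$. Notably, the other defining relation $\Q\Der\Q=\Q$ is not needed anywhere in the argument, in agreement with the claim that the proof is identical to that of Lemma~\ref{lem:IDRvariety}.
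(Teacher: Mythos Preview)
Your proof is correct and follows exactly the same approach as the paper: the forward direction uses $\Der\E=0$ to place $\E{g}$ in $\mathcal{C}$, and the converse uses $\E{g}=g$ for $g\in\mathcal{C}$. Your exposition simply unpacks these two observations with a bit more detail than the paper's two-line proof.
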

\begin{proof}
 If $\Q$ is $\mathcal{C}$-linear, then the other properties immediately follow from $\Der\E=0$.
 Conversely, $\mathcal{C}$-linearity follows since $\E{g}=g$ holds for $g\in\mathcal{C}$.
\end{proof}

The differential rings which the construction in Section~\ref{sec:free} is based on are precisely those arising as reducts of commutative quasi-integro-differential rings.

\begin{remark}\label{rem:restriction}
In an integro-differential ring $(\mathcal{S},\Der,\Int)$ with evaluation $\E$, we can obtain a quasi-integration $\Q$ on a differential subring $\mathcal{R}\subseteq\mathcal{S}$ as follows, if $\E\mathcal{R}\subseteq\mathcal{R}$ and $\Der\mathcal{R}$ has a complement in $\mathcal{R}$ over $\E\mathcal{R}$.
For any $\mathcal{R}_\T$ that is such a complement of $\Der\mathcal{R}$ in $\mathcal{R}$, we can define $\Q{f}:=\Int(f-f_\T)$ for any $f\in\mathcal{R}$, where $f_\T\in\mathcal{R}_\T$ such that $f-f_\T\in\Der\mathcal{R}$.
Then, $\Q$ is $\mathcal{C}$-linear and we have $\Q\Der\Q{f}=\Q\Der\Int(f-f_\T)=\Q(f-f_\T)=\Q{f}$.
By $\Q\Der{f}=\Int\Der{f}$, we obtain $\Der\Q\Der{f}=\Der\Int=\Der{f}$ and $\Q\mathcal{R}=\Q\Der\mathcal{R}=\Int\Der\mathcal{R}=(\id-\E)\mathcal{R}\subseteq\mathcal{R}$.
So, $\Q$ is indeed a quasi-integration on $\mathcal{R}$ and the induced projector $\T{f}=f-\Der\Int(f-f_\T)=f_\T$ maps onto $\mathcal{R}_\T$.
Note that, in this construction, the induced projector $\id-\Q\Der$ is the restriction of the evaluation $\E$ from $\mathcal{S}$ and that $\Q\mathcal{R}=\Int\Der\mathcal{R}$ too is independent of the choice of $\mathcal{R}_\T$.
\end{remark}

For the rest of this section, we fix a commutative quasi-integro-differential ring $(\mathcal{R},\Der,\Q)$ and construct the commutative integro-differential closure $\mathrm{IDR}_\Q(\mathcal{R})$ of it that respects $\Q$.
Recall that this can be done by factoring $\mathrm{IDR}(\mathcal{R})$ by the integro-differential ideal generated by the constants \eqref{eq:constgenerators} in order to make $\Int$ agree with $\Q$ on $\Der\mathcal{R}$.
Lemma~\ref{lem:FactorConst} already allows to construct $\mathrm{IDR}_\Q(\mathcal{R})$ equivalently by restricting to a subring of $\mathrm{IDR}(\mathcal{R})$ instead of taking a quotient, but the integration operation is still defined as projection of the integration of the full ring $\mathrm{IDR}(\mathcal{R})$.
In contrast, we now present a more direct construction of $(\mathrm{IDR}_\Q(\mathcal{R}),\Der,\Int)$ entirely without first constructing $\mathrm{IDR}(\mathcal{R})$ and performing underlying operations there.
In short, compared to the construction of $\mathrm{IDR}(\mathcal{R})$ in Section~\ref{sec:free}, we only need to modify the definitions of $M_1$ and $\Int$ slightly, as predicted by Lemma~\ref{lem:FactorConst}.

First, we replace the module $M_1$ defined in \eqref{eq:DefM1} by the submodule
\begin{equation}\label{eq:DefM1tilde}
 \widetilde{M}_1:=\mathcal{R}_\mathrm{J}\otimes\widetilde{\mathcal{T}}
\end{equation}
of it, which consequently leads to the definitions
\begin{align}
 \widetilde{\mathcal{C}}_1 &:= \Sym(\ep(\widetilde{M}_1))\label{eq:DefC1tilde}\\
 \mathrm{IDR}_\Q(\mathcal{R}) &:= \mathcal{R}\otimes\widetilde{\mathcal{C}}_1\otimes\mathcal{C}_2\otimes\mathcal{T}\label{eq:DefIDRQ}\\
 \widetilde{\mathcal{C}} &:= \mathcal{C}\otimes\widetilde{\mathcal{C}}_1\otimes\mathcal{C}_2\otimes\mathcal{C}\et \subseteq \mathrm{IDR}_\Q(\mathcal{R})\label{eq:DefCtilde}
\end{align}
instead of \eqref{eq:DefC1}, \eqref{eq:DefIDR}, and \eqref{eq:Cbar}, respectively.
Together with the other definitions and results of Section~\ref{sec:AlgebraStructure}, this completely explains $\mathrm{IDR}_\Q(\mathcal{R})$ as a $\mathcal{C}$-algebra.

Second, we modify the definition \eqref{eq:DefInt} of the integration by dropping from the $n=0$ case the term that involves an element of $M_1$ which lies in the complement of $\widetilde{M}_1$.
More explicitly, for $n\in\mathbb{N}$, $c=c_1\otimes{c_2}\in\widetilde{\mathcal{C}}_1\otimes\mathcal{C}_2$, and pure tensors $f\in\mathcal{R}_\T^{\otimes{n}}$, we use the formula
\begin{equation}\label{eq:DefIntQ}
 \Int(f_0\otimes{c}\otimes{f}):=
 \begin{cases}(\Q{f_0})\otimes{c}\otimes{f}+1\otimes{c}\otimes((f_0-\Der\Q{f_0})\otimes{f})&n=0\\[\smallskipamount]
 (\Q{f_0})\otimes{c}\otimes{f}-1\otimes(c_1{\cdot}\ep((\Q{f_0})\otimes{f}))\otimes{c_2}\otimes\et-{}\\\quad{}-\Int(((\Q{f_0})f_1)\otimes{c}\otimes{f_2^n})+1\otimes{c}\otimes((f_0-\Der\Q{f_0})\otimes{f})&n>0\end{cases}
\end{equation}
instead of \eqref{eq:DefInt}.
Together with definitions \eqref{eq:DefDeriv} and \eqref{eq:DefEval}, the lemmas stated in Section~\ref{sec:DerInt} remain true in $\mathrm{IDR}_\Q(\mathcal{R})$, except for formula \eqref{eq:Evaluation} being replaced by \eqref{eq:EvaluationExt} below.

For the sake of completeness, we explicitly state the analogs of the results in Section~\ref{sec:DerInt} and we mention what changes in their proofs, before we formulate and prove the main results Theorem~\ref{thm:IDRext} and Corollary~\ref{cor:universalExt} about $(\mathrm{IDR}_\Q(\mathcal{R}),\Der,\Int)$.
First, on $(\mathrm{IDR}_\Q(\mathcal{R}),\Der,\Int)$, $\Der$ and $\Int$ are $\widetilde{\mathcal{C}}$-linear, since \eqref{eq:linDer} and \eqref{eq:linInt} hold for all $f_0\in\mathcal{R}$, $c_1,d_1\in\widetilde{\mathcal{C}}_1$, $c_2,d_2\in\mathcal{C}_2$, $n\in\mathbb{N}$, and pure tensors $f\in\mathcal{R}_\T^{\otimes{n}}$.

\begin{lemma}\label{lem:Ext}
 For $\mathrm{IDR}_\Q(\mathcal{R})$, $\Der$, $\Int$, and $\E$ defined above, we have the following properties.
 \begin{enumerate}
  \item\label{item:LeibnizExt} The $\mathcal{C}$-linear map $\Der$ defined by \eqref{eq:DefDeriv} is a derivation on $\mathrm{IDR}_\Q(\mathcal{R})$.
  \item\label{item:RightInverseExt} On $\mathrm{IDR}_\Q(\mathcal{R})$, the $\mathcal{C}$-linear map $\Int$ defined by \eqref{eq:DefIntQ} is a right inverse of $\Der$.
  \item\label{item:EvaluationExt} Let $f_0\otimes{c_1}\otimes{c_2}\otimes{f} \in \mathrm{IDR}_\Q(\mathcal{R})$ with $f \in \mathcal{R}_\T^{\otimes{n}}$ for some $n \in \{0,1,2,\dots\}$. Then,
   \begin{equation}\label{eq:EvaluationExt}
    \E(f_0\otimes{c_1}\otimes{c_2}\otimes{f})=
    \begin{cases}1\otimes\big(c_1{\cdot}\big(\underbrace{f_0-\Q\Der{f_0}}_{\in \mathcal{C} \subseteq \widetilde{\mathcal{C}}_1}\big)\big)\otimes{c_2}\otimes{f}&n=0\\
    1\otimes(c_1{\cdot}\ep((\Q\Der{f_0})\otimes{f}))\otimes{c_2}\otimes\et&n>0\end{cases}.
   \end{equation}
  \item\label{item:constExt} The constants of $\mathrm{IDR}_\Q(\mathcal{R})$ are given by $\const_\Der(\mathrm{IDR}_\Q(\mathcal{R}))=\widetilde{\mathcal{C}}$.
 \end{enumerate}
\end{lemma}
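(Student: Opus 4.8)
The plan is to reuse the four proofs of Section~\ref{sec:DerInt} almost verbatim, carefully tracking the single structural change in this modified construction: $M_1$ has shrunk to $\widetilde{M}_1=\mathcal{R}_\mathrm{J}\otimes\widetilde{\mathcal{T}}$, so that $\ep(\Q{f_0})=\ep((\Q{f_0})\otimes\et)$ no longer lives in $\widetilde{\mathcal{C}}_1$, and accordingly the $n=0$ branch of \eqref{eq:DefIntQ} omits the term $1\otimes(c_1{\cdot}\ep(\Q{f_0}))\otimes{c_2}\otimes{f}$ present in \eqref{eq:DefInt}. Since $\Der$ and the multiplication on $\mathcal{C}_2\otimes\mathcal{T}$ are untouched, and since the proof of Lemma~\ref{lem:Leibniz} uses neither $\Int$ nor the first tensor slot (it works with $\bar{f}=f_0\otimes1\otimes1\otimes{f}$), part~\ref{item:LeibnizExt} follows by exactly that computation; combined with the stated $\widetilde{\mathcal{C}}$-linearity, this yields $\Der$ as a derivation.

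For parts~\ref{item:RightInverseExt} and~\ref{item:EvaluationExt} I would rerun the inductions of Lemmas~\ref{lem:RightInverse} and~\ref{lem:Evaluation}. In both, the $n>0$ branch of \eqref{eq:DefIntQ} agrees with that of \eqref{eq:DefInt}, so those steps transcribe directly; note in particular that for $n>0$ the surviving term $\ep((\Q\Der{f_0})\otimes{f})$ is well-defined because $f\in\mathcal{R}_\T^{\otimes{n}}\subseteq\widetilde{\mathcal{T}}$, whence $(\Q\Der{f_0})\otimes{f}\in\widetilde{M}_1$. In the base case $n=0$, the omitted term $1\otimes(c_1{\cdot}\ep(\Q{f_0}))\otimes{c_2}\otimes\et$ is a constant, hence annihilated by $\Der$, so its removal leaves the right-inverse computation $\Der\Int(f_0\otimes{c}\otimes\et)=f_0\otimes{c}\otimes\et$ intact. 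For the evaluation, applying the new $n=0$ branch to $(\Der{f_0})\otimes{c}\otimes{f}$ and using $\Der\Q\Der=\Der$ makes the trailing tensor vanish, giving $\Int((\Der{f_0})\otimes{c}\otimes{f})=(\Q\Der{f_0})\otimes{c}\otimes{f}$ and thus $\E(f_0\otimes{c}\otimes{f})=(f_0-\Q\Der{f_0})\otimes{c}\otimes{f}$; moving the constant $f_0-\Q\Der{f_0}\in\mathcal{C}\subseteq\widetilde{\mathcal{C}}_1$ into the second slot produces the first case of \eqref{eq:EvaluationExt}, which is precisely the old formula \eqref{eq:Evaluation} with the summand $\ep(\Q\Der{f_0})$ deleted. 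Part~\ref{item:constExt} then follows as in Lemma~\ref{lem:const}: from \eqref{eq:DefEval} and part~\ref{item:RightInverseExt} we get $\Der\E=0$ and that $\Der{f}=0$ forces $\E{f}=f$, so $\const_\Der(\mathrm{IDR}_\Q(\mathcal{R}))=\E\,\mathrm{IDR}_\Q(\mathcal{R})$, and reading the image off \eqref{eq:EvaluationExt} (with $f_0=1$ showing it exhausts $\widetilde{\mathcal{C}}$) gives $\widetilde{\mathcal{C}}$.

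Rather than a genuine obstacle, the one thing to get right is the bookkeeping of which $\ep$-terms persist in the smaller algebra $\widetilde{\mathcal{C}}_1=\Sym(\ep(\widetilde{M}_1))$: the deleted $n=0$ term is exactly the one whose argument $(\Q{f_0})\otimes\et$ uses the empty tensor and so escapes $\widetilde{M}_1$, whereas every retained $\ep$-term carries a nonempty tensor factor and remains well-defined. The only substantive check is that this deletion does not break $\Der\Int=\id$, and that is immediate since the deleted element is a constant. Everything else is a faithful transcription of the arguments in Section~\ref{sec:DerInt}.
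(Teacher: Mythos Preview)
Your proposal is correct and follows essentially the same approach as the paper's own proof: both argue that the proofs of Lemmas~\ref{lem:Leibniz}, \ref{lem:RightInverse}, \ref{lem:Evaluation}, and~\ref{lem:const} carry over, with the only substantive change being the $n=0$ computations, which the paper redoes explicitly just as you outline. Your additional bookkeeping about which $\ep$-terms remain well-defined in $\widetilde{\mathcal{C}}_1$ is a useful clarification that the paper leaves implicit.
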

\begin{proof}
 The proof of statement~\ref{item:LeibnizExt} is literally the same as that of Lemma~\ref{lem:Leibniz}.
 For statement~\ref{item:RightInverseExt}, the proof of Lemma~\ref{lem:RightInverse} carries over with $c=c_1\otimes{c_2}\in\widetilde{\mathcal{C}}_1\otimes\mathcal{C}_2$.
 For the base case $n=0$, the computation now reads
 \begin{align*}
  \Der\Int(f_0\otimes{c}\otimes\et) &= \Der\big((\Q{f_0})\otimes{c}\otimes\et+1\otimes{c}\otimes(f_0-\Der\Q{f_0})\big)\\
  &= (\Der\Q{f_0})\otimes{c}\otimes\et+(f_0-\Der\Q{f_0})\otimes{c}\otimes\et\\
  &= f_0\otimes{c}\otimes\et.
 \end{align*}
 For statement~\ref{item:EvaluationExt}, the proof of Lemma~\ref{lem:Evaluation} carries over.
 For the case $n=0$, the computation now reads
 \begin{align*}
  \E(f_0\otimes{c}\otimes{f}) &= f_0\otimes{c}\otimes{f} - \Int((\Der{f_0})\otimes{c}\otimes{f})\\
  &= f_0\otimes{c}\otimes{f} - \big((\Q\Der{f_0})\otimes{c}\otimes{f}+1\otimes{c}\otimes((\Der{f_0}-\Der\Q\Der{f_0})\otimes{f})\big)\\
  &= 1\otimes\big(c_1{\cdot}\big(\underbrace{f_0-\Q\Der{f_0}}_{\in \mathcal{C} \subseteq \widetilde{\mathcal{C}}_1}\big)\big)\otimes{c_2}\otimes{f}.
 \end{align*}
 Finally, for statement~\ref{item:constExt}, the proof of Lemma~\ref{lem:const} carries over, using statement~\ref{item:RightInverseExt} and \eqref{eq:EvaluationExt} instead of Lemma~\ref{lem:RightInverse} and \eqref{eq:Evaluation}, respectively.
\end{proof}

\begin{theorem}\label{thm:IDRext}
 With the above definitions, the following hold.
 \begin{enumerate}
  \item $(\mathrm{IDR}_\Q(\mathcal{R}),\Der,\Int)$ is a commutative integro-differential ring.
  \item\label{item:embeddingExt} $\iota:\mathcal{R}\to\mathrm{IDR}_\Q(\mathcal{R})$ defined by \eqref{eq:DefIota} is an injective differential ring homomorphism such that $\Int\iota(f)=\iota(\Q{f})$ holds for all $f \in \Der\mathcal{R}$.
  \item Let $J_1\subseteq\mathrm{IDR}(\mathcal{R})$ be the integro-differential ideal generated by \eqref{eq:constgenerators}.
   Then, there is a unique bijective integro-differential ring homomorphism $\psi:\mathrm{IDR}(\mathcal{R})/J_1\to\mathrm{IDR}_\Q(\mathcal{R})$ such that $\psi(\pi(\iota(f)))=\iota(f)$, where $\pi:\mathrm{IDR}(\mathcal{R})\to\mathrm{IDR}(\mathcal{R})/J_1$ is the canonical projection.
 \end{enumerate}
\end{theorem}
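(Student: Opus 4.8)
The plan is to handle the three parts by transporting the arguments of Theorem~\ref{thm:embedding}, now invoking Lemma~\ref{lem:Ext} in place of the lemmas of Section~\ref{sec:DerInt}, and then to deduce the third part from Lemma~\ref{lem:FactorConst} after identifying the integration it produces abstractly with the concrete formula \eqref{eq:DefIntQ}.

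For the first part, I would first note that $\mathrm{IDR}_\Q(\mathcal{R})=\mathcal{R}\otimes\widetilde{\mathcal{C}}_1\otimes\mathcal{C}_2\otimes\mathcal{T}$ is a commutative $\mathcal{C}$-algebra, being a tensor product of the commutative $\mathcal{C}$-algebras $\mathcal{R}$, $\widetilde{\mathcal{C}}_1$, and $(\mathcal{C}_2\otimes\mathcal{T},+,\cdot)$, the last by Theorem~\ref{thm:Calgebra}. Parts~\ref{item:LeibnizExt} and~\ref{item:constExt} of Lemma~\ref{lem:Ext} then make $(\mathrm{IDR}_\Q(\mathcal{R}),\Der)$ a commutative differential ring with constants $\widetilde{\mathcal{C}}$, part~\ref{item:RightInverseExt} gives $\Der\Int=\id$, and $\widetilde{\mathcal{C}}$-linearity of $\Int$ (from \eqref{eq:linInt}) shows that $\Int$ is $\const_\Der(\mathrm{IDR}_\Q(\mathcal{R}))$-linear, so Definition~\ref{def:IDR} applies exactly as in Theorem~\ref{thm:embedding}. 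For the second part, injectivity of the $\mathcal{C}$-algebra homomorphism $\iota$ follows as in Theorem~\ref{thm:embedding}, since $\mathcal{C}$ remains complemented in each of $\widetilde{\mathcal{C}}_1$, $\mathcal{C}_2$, and $\mathcal{T}$, and $\Der\iota(f)=\iota(\Der f)$ holds by \eqref{eq:DefDeriv}. The genuinely new point is $\Int\iota(f)=\iota(\Q f)$ for $f\in\Der\mathcal{R}$: here $\Der\Q f=f$ by $\Der\Q\Der=\Der$, so $f-\Der\Q f=0$ and the $n=0$ case of \eqref{eq:DefIntQ} collapses to $(\Q f)\otimes1\otimes1\otimes\et=\iota(\Q f)$.

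The heart of the argument is the third part, and the main obstacle is verifying that the integration $\bar{\pi}\circ\Int$ produced by Lemma~\ref{lem:FactorConst} coincides with \eqref{eq:DefIntQ}. I would apply Lemma~\ref{lem:FactorConst} with $C:=\ep(\mathcal{R}_\mathrm{J}\otimes\et)$ and $\mathcal{D}:=\widetilde{\mathcal{C}}_1=\Sym(\ep(\widetilde{M}_1))$. Since $M_1=(\mathcal{R}_\mathrm{J}\otimes\mathcal{C}\et)\oplus\widetilde{M}_1$, the decomposition $\Sym(A\oplus B)\cong\Sym(A)\otimes\Sym(B)$ gives $\mathcal{C}_1=(C)\oplus\mathcal{D}$ with $\mathcal{D}$ a subalgebra, and the generators $\{1\otimes c\otimes1\otimes\et\mid c\in C\}$ are exactly the constants \eqref{eq:constgenerators} because $\Q\Der\mathcal{R}=\Q\mathcal{R}=\mathcal{R}_\mathrm{J}$. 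Lemma~\ref{lem:FactorConst} then yields a bijective integro-differential ring homomorphism $\psi:\mathrm{IDR}(\mathcal{R})/J_1\to\mathcal{R}\otimes\mathcal{D}\otimes\mathcal{C}_2\otimes\mathcal{T}$, whose codomain coincides with $\mathrm{IDR}_\Q(\mathcal{R})$ as a $\mathcal{C}$-algebra and whose derivation is the common \eqref{eq:DefDeriv}.

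To finish, I would check $\bar{\pi}\circ\Int=\Int$ on $\mathrm{IDR}_\Q(\mathcal{R})$ by induction on the tensor length $n$: applying $\bar{\pi}$ to \eqref{eq:DefInt} with $c_1\in\mathcal{D}$, the term $c_1{\cdot}\ep(\Q f_0)$ in the $n=0$ case lies in $(C)$ and is annihilated by $\pi_{\mathcal{D}}$, whereas for $n>0$ the term $\ep((\Q f_0)\otimes f)$ lies in $\ep(\widetilde{M}_1)\subseteq\mathcal{D}$ and is preserved, while the recursive integral is handled by the induction hypothesis; the surviving terms reassemble precisely into \eqref{eq:DefIntQ}. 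This identifies $\psi$ as the claimed integro-differential isomorphism, and $\psi(\pi(\iota(f)))=\bar{\pi}(\iota(f))=\iota(f)$ since $\bar{\pi}$ fixes the unit of $\mathcal{D}$. Uniqueness follows because $\mathrm{IDR}(\mathcal{R})$, and hence $\mathrm{IDR}(\mathcal{R})/J_1$, is generated as an integro-differential ring by $\iota(\mathcal{R})$ (as in the generation used for uniqueness in Theorem~\ref{thm:universal}), so any integro-differential homomorphism out of it is determined by its values on $\pi(\iota(\mathcal{R}))$.
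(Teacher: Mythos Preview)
Your treatment of parts 1 and 2 matches the paper's almost verbatim. For part~3 your route differs genuinely from the paper's. The paper first uses the universal property (Theorem~\ref{thm:universal}) applied to the already-established integro-differential ring $(\mathrm{IDR}_\Q(\mathcal{R}),\Der,\Int)$ to obtain $\bar{\pi}:\mathrm{IDR}(\mathcal{R})\to\mathrm{IDR}_\Q(\mathcal{R})$, checks via \eqref{eq:Evaluation} and \eqref{eq:EvaluationExt} that $J_1\subseteq\ker(\bar{\pi})$, factors to get $\psi$, and then cites Lemma~\ref{lem:FactorConst} for bijectivity; uniqueness is reduced to uniqueness in Theorem~\ref{thm:universal}. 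You instead bypass the universal property entirely: you invoke Lemma~\ref{lem:FactorConst} directly with $C=\ep(\mathcal{R}_\mathrm{J}\otimes\et)$ and $\mathcal{D}=\widetilde{\mathcal{C}}_1$ to produce the bijective $\psi$, and then verify by induction on $n$ that the abstract integration $\bar{\pi}\circ\Int$ from the lemma coincides with the concrete formula~\eqref{eq:DefIntQ}. Your approach is more self-contained and makes explicit the identification of the two integrations that the paper's appeal to Lemma~\ref{lem:FactorConst} leaves implicit; the paper's approach is conceptually cleaner in that existence and the homomorphism property come for free from universality, at the cost of invoking the heavier Theorem~\ref{thm:universal}. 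Both are valid.
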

\begin{proof}
 Using Lemma~\ref{lem:Ext} instead of Lemmas \ref{lem:Leibniz}, \ref{lem:RightInverse}, and \ref{lem:const}, the proof of Theorem~\ref{thm:embedding} carries over to show that $(\mathrm{IDR}_\Q(\mathcal{R}),\Der,\Int)$ is a commutative integro-differential ring and that $\iota$ is an injective differential ring homomorphism.
 Moreover, by \eqref{eq:DefIntQ}, it follows that $\Int\iota(\Der{f})=\iota(\Q\Der{f})$ holds for all $f \in \mathcal{R}$.
\par
 Finally, we show that $\psi:\mathrm{IDR}(\mathcal{R})/J_1\to\mathrm{IDR}_\Q(\mathcal{R})$ satisfying the claimed properties exists.
 By Theorem~\ref{thm:universal}, there exists a unique integro-differential homomorphism $\bar{\pi}:\mathrm{IDR}(\mathcal{R})\to\mathrm{IDR}_\Q(\mathcal{R})$ such that $\bar{\pi}(\iota(f))=\iota(f)$ for all $f\in\mathcal{R}$.
 Since $\bar{\pi}(1\otimes\ep(f)\otimes1\otimes\et)=\bar{\pi}(\E\iota(f))=\E\bar{\pi}(\iota(f))=\E\iota(f)=0$ for $f\in\Q\Der\mathcal{R}$ by \eqref{eq:Evaluation} and \eqref{eq:EvaluationExt}, it follows that $J_1\subseteq\ker(\bar{\pi})$.
 Hence, by factoring out $J_1$, $\bar{\pi}$ induces a unique integro-differential ring homomorphism $\psi:\mathrm{IDR}(\mathcal{R})/J_1\to\mathrm{IDR}_\Q(\mathcal{R})$ with $\psi\circ\pi=\bar{\pi}$.
 Lemma~\ref{lem:FactorConst} implies that $\psi$ is bijective.
 Finally, $\psi$ is unique with $\psi\circ\pi\circ\iota=\iota$, since any such integro-differential ring homomorphism $\psi$ arises from the corresponding $\bar{\psi}:=\psi\circ\pi$ by factoring out $J_1$.
\end{proof}

\begin{remark}
If we consider the free integro-differential ring $\mathrm{IDR}(\mathcal{R})$ over $(\mathcal{R},\Der)$, we have that the evaluation of any non-constant element $f \in \mathcal{R}\setminus\mathcal{C}$ yields a new constant $\E\iota(f)\not\in\iota(\mathcal{R})$ by \eqref{eq:Evaluation}.
In contrast, in $\mathrm{IDR}_\Q(\mathcal{R})$, the property $\Int\iota(f)=\iota(\Q{f})$ on $\Der\mathcal{R}$ implies instead that the evaluation of all $f \in \mathcal{R}$ yields a constant from $\mathcal{C}$ by $\E\iota(f)=\iota(f-\Q\Der{f})\in\iota(\mathcal{C})$.
\end{remark}

In particular, the construction described in Remark~\ref{rem:restriction} can be applied to obtain a quasi-integration on $\mathcal{R}$ from the integration of $\mathrm{IDR}_\Q(\mathcal{R})$, whereby we recover the same $\Q$ only if we choose the same $\mathcal{R}_\T$.
From Theorems \ref{thm:IDRext} and \ref{thm:universal}, we can deduce the following universal property.
While $\mathrm{IDR}(\mathcal{R})$ is always isomorphic for different $\Q$ on the same $(\mathcal{R},\Der)$, we obtain isomorphic integro-differential rings $\mathrm{IDR}_\Q(\mathcal{R})$ only if quasi-integrations have the same image $\mathcal{R}_\mathrm{J}=\Q\mathcal{R}$.

\begin{corollary}\label{cor:universalExt}
 Let $(\mathcal{S},\Der,\Int)$ be a commutative integro-differential ring and let $\varphi:\mathcal{R}\to\mathcal{S}$ be a differential ring homomorphism such that $\varphi(\Q\Der{f})=\Int\Der\varphi(f)$.
 Then, there exists a unique integro-differential ring homomorphism $\eta:\mathrm{IDR}_\Q(\mathcal{R})\to\mathcal{S}$ such that $\eta(\iota(f))=\varphi(f)$ for all $f\in\mathcal{R}$.
\end{corollary}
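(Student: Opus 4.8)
The plan is to obtain $\eta$ by combining the universal property of the unconstrained free object $\mathrm{IDR}(\mathcal{R})$ (Theorem~\ref{thm:universal}) with the identification of $\mathrm{IDR}_\Q(\mathcal{R})$ as the quotient $\mathrm{IDR}(\mathcal{R})/J_1$ supplied by Theorem~\ref{thm:IDRext}, exactly as the sentence preceding the corollary suggests. First I would invoke Theorem~\ref{thm:universal} for the differential ring homomorphism $\varphi:\mathcal{R}\to\mathcal{S}$ to obtain a unique integro-differential ring homomorphism $\tilde{\eta}:\mathrm{IDR}(\mathcal{R})\to\mathcal{S}$ satisfying $\tilde{\eta}(\iota(f))=\varphi(f)$ for all $f\in\mathcal{R}$. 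The whole point of the extra hypothesis $\varphi(\Q\Der{f})=\Int\Der\varphi(f)$ is to force the generators of $J_1$ into the kernel of this $\tilde{\eta}$.

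To see this, I would note that for any $f\in\mathcal{R}$ we have $\Q\Der{f}\in\mathcal{R}_\mathrm{J}$ and $\Q\Der\Q\Der{f}=\Q\Der{f}$ by $\Q\Der\Q=\Q$, so that \eqref{eq:Evaluation} identifies the generator $1\otimes\ep(\Q\Der{f})\otimes1\otimes\et$ of $J_1$ with the evaluation $\E\iota(\Q\Der{f})$. Applying the integro-differential ring homomorphism $\tilde{\eta}$ and using the hypothesis gives $\tilde{\eta}(\E\iota(\Q\Der{f}))=\E\varphi(\Q\Der{f})=\E\Int\Der\varphi(f)=0$, since $\E\Int=0$ holds in $\mathcal{S}$ by \eqref{eq:EvaluationDef}. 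As $J_1$ is generated by these constants \eqref{eq:constgenerators}, this yields $J_1\subseteq\ker(\tilde{\eta})$, and by the remarks on quotients of integro-differential rings in Section~\ref{sec:Ext}, $\tilde{\eta}$ factors uniquely as $\tilde{\eta}=\bar{\eta}\circ\pi$ through an integro-differential ring homomorphism $\bar{\eta}:\mathrm{IDR}(\mathcal{R})/J_1\to\mathcal{S}$.

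With this in hand I would invoke the bijective integro-differential ring homomorphism $\psi:\mathrm{IDR}(\mathcal{R})/J_1\to\mathrm{IDR}_\Q(\mathcal{R})$ from part~3 of Theorem~\ref{thm:IDRext}, which satisfies $\psi(\pi(\iota(f)))=\iota(f)$, and define $\eta:=\bar{\eta}\circ\psi^{-1}$. This is an integro-differential ring homomorphism, and it satisfies $\eta(\iota(f))=\bar{\eta}(\psi^{-1}(\iota(f)))=\bar{\eta}(\pi(\iota(f)))=\tilde{\eta}(\iota(f))=\varphi(f)$, establishing existence. For uniqueness I would argue that, given any integro-differential ring homomorphism $\eta'$ with $\eta'(\iota(f))=\varphi(f)$, the composite $\eta'\circ\psi\circ\pi:\mathrm{IDR}(\mathcal{R})\to\mathcal{S}$ is an integro-differential ring homomorphism agreeing with $\varphi$ on $\iota(\mathcal{R})$, hence equals $\tilde{\eta}=\bar{\eta}\circ\pi$ by the uniqueness clause of Theorem~\ref{thm:universal}; surjectivity of $\pi$ and bijectivity of $\psi$ then force $\eta'=\bar{\eta}\circ\psi^{-1}=\eta$.

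Since the argument is essentially a diagram chase over the two cited theorems, I expect no serious obstacle. The only step demanding care is the identification of the $J_1$-generators with the evaluations $\E\iota(\Q\Der{f})$ via \eqref{eq:Evaluation} and the verification that the compatibility hypothesis $\varphi(\Q\Der{f})=\Int\Der\varphi(f)$ is precisely the condition making these map to zero in $\mathcal{S}$; everything else reduces to routine applications of Theorems~\ref{thm:universal} and~\ref{thm:IDRext}.
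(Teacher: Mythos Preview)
Your proposal is correct and follows essentially the same approach as the paper: obtain the map from Theorem~\ref{thm:universal}, verify via \eqref{eq:Evaluation} and the hypothesis $\varphi(\Q\Der f)=\Int\Der\varphi(f)$ that the generators \eqref{eq:constgenerators} of $J_1$ lie in its kernel, factor through $\psi\circ\pi$ using Theorem~\ref{thm:IDRext}, and deduce uniqueness by pulling back along $\psi\circ\pi$. The only difference is cosmetic---you factor through $\pi$ first and then compose with $\psi^{-1}$, whereas the paper factors through $\psi\circ\pi$ in one step using $J_1=\ker(\psi\circ\pi)$---but the underlying diagram chase is identical.
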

\begin{proof}
 By Theorem~\ref{thm:universal}, there is a unique integro-differential ring homomorphism $\bar{\eta}:\mathrm{IDR}(\mathcal{R})\to\mathcal{S}$ such that $\bar{\eta}(\iota(f))=\varphi(f)$ for all $f\in\mathcal{R}$.
 By assumption on $\varphi$, we have $\bar{\eta}(1\otimes\ep(\Q\Der{f})\otimes1\otimes\et) = \bar{\eta}(\E\iota(\Q\Der{f})) = \E\bar{\eta}(\iota(\Q\Der{f})) = \E\varphi(\Q\Der{f}) = \E\Int\Der\varphi(f) = 0$ for all $f\in\mathcal{R}$.
 With $J_1,\psi,\pi$ from Theorem~\ref{thm:IDRext}, we have $J_1=\ker(\psi\circ\pi)$ and $\bar{\eta}(J_1)=0$.
 Therefore, we obtain a unique integro-differential ring homomorphism $\eta:\mathrm{IDR}_\Q(\mathcal{R})\to\mathcal{S}$ such that $\bar{\eta}=\eta\circ\psi\circ\pi$.
 By Theorem~\ref{thm:IDRext}, on $\mathcal{R}$, we have $\eta\circ\iota=\eta\circ\psi\circ\pi\circ\iota=\bar{\eta}\circ\iota=\varphi$.
\begin{center}
 \begin{tikzpicture}
  \node at (0,0) (r) {$\mathcal{R}$};
  \node at (4,0) (s) {$\mathcal{S}$};
  \node at (0,-2) (g) {$\mathrm{IDR}(\mathcal{R})$};
  \node at (4,-2) (gq) {$\mathrm{IDR}_\Q(\mathcal{R})$};
  \draw (r) edge[->] node[above] {$\varphi$} (s);
  \draw (r) edge[->] node[left] {$\iota$} (g);
  \draw (r) edge[->] node[pos=0.3,above] {$\iota$} (gq);
  \draw (g) edge[->] node[above] {$\psi\circ\pi$} (gq);
  \draw (g) edge[->] node[pos=0.8,below] {$\bar{\eta}$} (s);
  \draw (gq) edge[->] node[right] {$\eta$} (s);
 \end{tikzpicture}
\end{center}
\par
 Now, assume that $\eta:\mathrm{IDR}_\Q(\mathcal{R})\to\mathcal{S}$ is an arbitrary integro-differential ring homomorphism such that $\eta(\iota(f))=\varphi(f)$ for all $f\in\mathcal{R}$.
 Then, $\tilde{\eta}:=\eta\circ\psi\circ\pi$ is an integro-differential ring homomorphism $\mathrm{IDR}(\mathcal{R})\to\mathcal{S}$ such that $\tilde{\eta}\circ\iota = \eta\circ\psi\circ\pi\circ\iota = \eta\circ\iota = \varphi$ on $\mathcal{R}$.
 By Theorem~\ref{thm:universal}, $\tilde{\eta}$ is identical with $\bar{\eta}$ above.
 Hence $\eta$ satisfies $\bar{\eta}=\eta\circ\psi\circ\pi$, which makes it unique as shown above.
\end{proof}

Note that, in Corollary~\ref{cor:universalExt}, $\varphi$ can in particular be any quasi-integro-differential ring homomorphism.
For such a $\varphi$, we necessarily have that $\varphi(\T{f})=\varphi(f)-\Der\Int\varphi(f)=0$ for all $f \in \mathcal{R}$, i.e.\ $\mathcal{R}_\T \subseteq \ker(\varphi)$.
Specifically, a quasi-integro-differential ring homomorphism $\varphi:\mathcal{R}\to\mathrm{IDR}_\Q(\mathcal{R})$ need not exist, however.
E.g., for $(\mathcal{R},\Der,\Q)=(\mathcal{C}((x)),\frac{d}{dx},\Q)$ in Example~\ref{ex:LaurentExt} below, we would obtain $\iota(1)=\varphi(x)\varphi(\frac{1}{x})=\varphi(\Q1)\varphi(\T\frac{1}{x})=(\Int\iota(1))0=0$ from $\mathcal{R}_\T \subseteq \ker(\varphi)$ and $\varphi(1)=\iota(1)$ for such a $\varphi$.

We briefly revisit the three basic examples treated in Section~\ref{sec:examples}.

\begin{example}
 If $(\mathcal{R},\Der,\Q)=(\mathcal{C},0,0)$ is a quasi-integro-differential ring with trivial derivation and quasi-integration, it follows that $\mathcal{C}_1=\widetilde{\mathcal{C}}_1=\mathcal{C}$, since $\mathcal{R}_\mathrm{J}=\{0\}$.
 So, nothing changes compared to Example~\ref{ex:FreeConstant} and we have $\mathrm{IDR}_\Q(\mathcal{R})=\mathrm{IDR}(\mathcal{R})$.
\end{example}

\begin{example}\label{ex:LaurentExt}
 For $(\mathcal{R},\Der,\Q)=(\mathcal{C}((x)),\frac{d}{dx},\Q)$ and $\Q$ as in Example~\ref{ex:FreeLaurent}, we have $\widetilde{\mathcal{C}}_1=\Sym(\ep(\mathcal{R}_\mathrm{J}\otimes\bigoplus_{n=1}^\infty\mathcal{C}x_n))$.
 Moreover, in contrast to Example~\ref{ex:FreeLaurent}, we have $\E\iota(f)=\iota(c)$ for $f\in\mathcal{R}$, where $c\in\mathcal{C}$ is the constant term of $f$, cf.~\eqref{eq:LaurentEval}.
 In analogy to Example~\ref{ex:FreeLaurent}, as a $\widetilde{\mathcal{C}}$-module, $\mathrm{IDR}_\Q(\mathcal{R})$ is isomorphic to $\widetilde{\mathcal{C}}\otimes\mathcal{R}[\ln(x)]$.
\end{example}

\begin{example}
 If $(\mathcal{R},\Der,\Q)$ is an integro-differential ring, then $\mathcal{R}_\T$ and hence $\widetilde{M}_1$ are the trivial module.
 Consequently, $\widetilde{\mathcal{C}}_1=\mathcal{C}$.
 Together with $\mathcal{C}_2=\mathcal{C}$ and $\mathcal{T}=\mathcal{C}\et$ obtained in Example~\ref{ex:FreeSurjective}, this yields $\mathrm{IDR}_\Q(\mathcal{R})=\iota(\mathcal{R})$, i.e.\ we recover the integro-differential ring we started with by Theorem~\ref{thm:IDRext}.\ref{item:embeddingExt}.
\end{example}

\subsection{Integro-differential ring closure without new constants}
\label{sec:NNC}

When extending the given differential ring $(\mathcal{R},\Der)$, it is sometimes advantageous to avoid the introduction of new constants altogether.
To achieve this, we can factor the free intergro-differential ring $\mathrm{IDR}(\mathcal{R})$ by any ideal $J$ that is generated by an appropriate generating set of the $\mathcal{C}$-algebra $\bar{\mathcal{C}}$ such that $J\cap\mathcal{C}=\{0\}$.
Trivially, we can choose the generating set
\begin{equation}\label{eq:gensCbar}
 \mathcal{C}\otimes\ep(M_1)\otimes\mathcal{C}\otimes\mathcal{C}\et \enspace\cup\enspace \mathcal{C}\otimes\mathcal{C}\otimes\ep(M_2)\otimes\mathcal{C}\et \enspace\subseteq\enspace \bar{\mathcal{C}}.
\end{equation}

Instead of factoring $\mathrm{IDR}(\mathcal{R})/J$ based on \eqref{eq:gensCbar}, we can construct this ring more directly by removing all new constants from the construction of $\mathrm{IDR}(\mathcal{R})$.
Fixing $(\mathcal{R},\Der)$ and a quasi-integration $\Q$, we thereby straightforwardly construct
\begin{equation}\label{eq:DefSDR}
 \mathrm{IDR}_{\Q,\shuffle}(\mathcal{R}) := \mathcal{R}\otimes\mathcal{T}
\end{equation}
as tensor product of $\mathcal{C}$-algebras, where $\mathcal{T}$ as in \eqref{eq:DefTensors} is equipped with the shuffle product $\shuffle$, and define the derivation on $\mathrm{IDR}_{\Q,\shuffle}(\mathcal{R})$ via
\begin{equation}\label{eq:DefDerivNNC}
 \Der(f_0\otimes{f}):=
 \begin{cases}(\Der{f_0})\otimes{f}&n=0\\
 (\Der{f_0})\otimes{f}+(f_0f_1)\otimes{f_2^n}&n>0\end{cases}
\end{equation}
for $f_0\in\mathcal{R}$ and pure tensors $f\in\mathcal{R}_\T^{\otimes{n}}$.
We also let $\widetilde{\mathcal{T}}\subseteq\mathcal{T}$ as in \eqref{eq:DefTtilde}.
Since $\mathcal{C}\et$ is a complemented $\mathcal{C}$-submodule of $\mathcal{T}$, it follows that $\iota:\mathcal{R}\to\mathrm{IDR}_{\Q,\shuffle}(\mathcal{R})$ defined by $\iota(f):=f\otimes\et$ is an injective ring homomorphism, which even commutes with $\Der$ by \eqref{eq:DefDerivNNC}.
We will show that the differential ring $(\mathrm{IDR}_{\Q,\shuffle}(\mathcal{R}),\Der)$ indeed has no new constants in addition to $\iota(\mathcal{C})$.
An integration is defined on $\mathrm{IDR}_{\Q,\shuffle}(\mathcal{R})$ via
\begin{equation}\label{eq:DefIntNNC}
 \Int\!_\Q(f_0\otimes{f}):=
 \begin{cases}(\Q{f_0})\otimes{f}+1\otimes((f_0-\Der\Q{f_0})\otimes{f})&n=0\\[\smallskipamount]
 (\Q{f_0})\otimes{f}-\Int\!_\Q(((\Q{f_0})f_1)\otimes{f_2^n})+1\otimes((f_0-\Der\Q{f_0})\otimes{f})&n>0\end{cases}.
\end{equation}

For this construction, we obtain the following analogues of Lemma~\ref{lem:Ext}, Theorem~\ref{thm:IDRext}, and Corollary~\ref{cor:universalExt}.
Note the simplified formulae \eqref{eq:DefIntNNC} and \eqref{eq:EvaluationNNC} compared to the integrations and evaluations discussed for $\mathrm{IDR}(\mathcal{R})$ and $\mathrm{IDR}_\Q(\mathcal{R})$.

\begin{lemma}\label{lem:NNC}
 For $\mathrm{IDR}_{\Q,\shuffle}(\mathcal{R})$, $\Der$, and $\Int\!_\Q$ defined above, we have the following properties.
 \begin{enumerate}
  \item\label{item:LeibnizNNC} The $\mathcal{C}$-linear map $\Der$ defined by \eqref{eq:DefDerivNNC} is a derivation on $\mathrm{IDR}_{\Q,\shuffle}(\mathcal{R})$.
  \item\label{item:RightInverseNNC} On $\mathrm{IDR}_{\Q,\shuffle}(\mathcal{R})$, the $\mathcal{C}$-linear map $\Int\!_\Q$ defined by \eqref{eq:DefIntNNC} is a right inverse of $\Der$.
  \item\label{item:EvaluationNNC} The $\mathcal{C}$-linear map $\E_\Q:=\id-\Int\!_\Q\Der$ acts on $f_0\otimes{f} \in \mathrm{IDR}_{\Q,\shuffle}(\mathcal{R})$, where $f \in \mathcal{R}_\T^{\otimes{n}}$ and $n \in \{0,1,2,\dots\}$, by
   \begin{equation}\label{eq:EvaluationNNC}
    \E_\Q(f_0\otimes{f})=
    \begin{cases}\big(\underbrace{f_0-\Q\Der{f_0}}_{\in \mathcal{C}}\big)\otimes{f}&n=0\\
    0&n>0\end{cases}.
   \end{equation}
  \item\label{item:constNNC} The constants of $\mathrm{IDR}_{\Q,\shuffle}(\mathcal{R})$ are given by $\const_\Der(\mathrm{IDR}_{\Q,\shuffle}(\mathcal{R}))=\iota(\mathcal{C})$.
 \end{enumerate}
\end{lemma}
\begin{proof}
 The proof of Lemma~\ref{lem:Ext} carries over and becomes slightly simpler.
\end{proof}

\begin{theorem}\label{thm:IDRNNC}
 With the above definitions, the following hold.
 \begin{enumerate}
  \item $(\mathrm{IDR}_{\Q,\shuffle}(\mathcal{R}),\Der,\Int\!_\Q)$ is a commutative integro-differential ring.
  \item\label{item:embeddingNNC} $\iota:\mathcal{R}\to\mathrm{IDR}_{\Q,\shuffle}(\mathcal{R})$ defined by $\iota(f):=f\otimes\et$ is an injective differential ring homomorphism such that $\Int\!_\Q\iota(f)=\iota(\Q{f})$ holds for all $f \in \Der\mathcal{R}$.
   In addition, for all $f_0 \in \mathcal{R}_\mathrm{J}$ and all $f_1,\dots,f_n,g_1,\dots,g_m \in \mathcal{R}_\T$ where $n,m>0$, we have
   \begin{gather*}
    \E_\Q\iota(f_0)\Int\!_\Q\iota(f_1)\dots\Int\!_\Q\iota(f_n)=0 \quad\text{and}\\
    \E_\Q\big(\Int\!_\Q\iota(f_1)\dots\Int\!_\Q\iota(f_n)\big)\Int\!_\Q\iota(g_1)\dots\Int\!_\Q\iota(g_m)=0.
   \end{gather*}
  \item Let $J\subseteq\mathrm{IDR}(\mathcal{R})$ be the integro-differential ideal generated by \eqref{eq:gensCbar}.
   Then, there is a unique bijective integro-differential ring homomorphism $\psi:\mathrm{IDR}(\mathcal{R})/J\to\mathrm{IDR}_{\Q,\shuffle}(\mathcal{R})$ such that $\psi(\pi(\iota(f)))=\iota(f)$, where $\pi:\mathrm{IDR}(\mathcal{R})\to\mathrm{IDR}(\mathcal{R})/J$ is the canonical projection.
 \end{enumerate}
\end{theorem}
\begin{proof}
 Based on Lemma~\ref{lem:NNC} it immediately follows that $(\mathrm{IDR}_{\Q,\shuffle}(\mathcal{R}),\Der,\Int\!_\Q)$ is a commutative integro-differential ring and that $\iota$ is an injective differential ring homomorphism.
 Moreover, by \eqref{eq:DefIntNNC}, it follows that $\Int\!_\Q\iota(\Der{f})=\iota(\Q\Der{f})$ holds for all $f \in \mathcal{R}$.
 For pure tensors $f \in \mathcal{R}_\T^{\otimes{n}}$ with $n>0$, we obtain $\Int\!_\Q\iota(f_1)\dots\Int\!_\Q\iota(f_n)=1\otimes{f}$ by induction using \eqref{eq:DefIntNNC}.
 Note that, $\mathcal{R}\otimes\widetilde{\mathcal{T}} \subseteq \mathrm{IDR}_{\Q,\shuffle}(\mathcal{R})$ is an ideal, since the product used in $\mathcal{T}$ is the shuffle product.
 By \eqref{eq:EvaluationNNC}, $\E_\Q$ vanishes on this ideal, which implies the remaining identities claimed in statement \ref{item:embeddingNNC}.
\par
 Finally, we show that $\psi:\mathrm{IDR}(\mathcal{R})/J\to\mathrm{IDR}_{\Q,\shuffle}(\mathcal{R})$ satisfying the claimed properties exists.
 By Theorem~\ref{thm:universal}, there exists a unique integro-differential homomorphism $\bar{\pi}:\mathrm{IDR}(\mathcal{R})\to\mathrm{IDR}_{\Q,\shuffle}(\mathcal{R})$ such that $\bar{\pi}(\iota(f))=\iota(f)$ for all $f\in\mathcal{R}$.
 We have
 \[
  \bar{\pi}(f_0\otimes1\otimes1\otimes{f})=\iota(f_0)\Int\!_\Q\iota(f_1)\dots\Int\!_\Q\iota(f_n)=f_0\otimes{f}
 \]
 for all $f_0 \in \mathcal{R}$ and all pure tensors $f=f_1\otimes\dots\otimes{f_n} \in \mathcal{T}$.
 In particular, it follows that the restriction of $\bar{\pi}$ to $\mathcal{R}\otimes\mathcal{C}\otimes\mathcal{C}\otimes\mathcal{T}$ is bijective.
 Moreover, for all $f_0 \in \mathcal{R}_\mathrm{J}$ and all pure tensors $f=f_1\otimes\dots\otimes{f_n} \in \mathcal{T}$ and $s=s_1\otimes\dots\otimes{s_m},t=t_1\otimes\dots\otimes{t_l} \in \widetilde{\mathcal{T}}$, we obtain
 \[
  \bar{\pi}(1\otimes\ep(f_0\otimes{f})\otimes1\otimes\et)=\bar{\pi}(\E(f_0\otimes1\otimes1\otimes{f}))=\E_\Q(f_0\otimes{f})=0
 \]
 as well as
 \begin{multline*}
  \bar{\pi}(1\otimes1\otimes\ep(s\odot{t})\otimes\et)=\bar{\pi}(\E(1\otimes1\otimes1\otimes{s})(1\otimes1\otimes1\otimes{t}))\\
  =\E_\Q(1\otimes{s})(1\otimes{t})=\E_\Q(1\otimes(s\shuffle{t}))=0
 \end{multline*}
 by \eqref{eq:EvaluationNNC}.
 Therefore, $J\subseteq\ker(\bar{\pi})$.
 Hence, by factoring out $J$, $\bar{\pi}$ induces a unique integro-differential ring homomorphism $\psi:\mathrm{IDR}(\mathcal{R})/J\to\mathrm{IDR}_{\Q,\shuffle}(\mathcal{R})$ with $\psi\circ\pi=\bar{\pi}$.
 Since $\mathrm{IDR}(\mathcal{R})=(\mathcal{R}\otimes\mathcal{C}\otimes\mathcal{C}\otimes\mathcal{T})\oplus{J}$, bijectivity of the restriction of $\bar{\pi}$ to $\mathcal{R}\otimes\mathcal{C}\otimes\mathcal{C}\otimes\mathcal{T}$ implies that $\psi$ is bijective as well.
 Finally, $\psi$ is unique with $\psi\circ\pi\circ\iota=\iota$, since any such integro-differential ring homomorphism $\psi$ arises from the corresponding $\bar{\psi}:=\psi\circ\pi$ by factoring out $J$.
\end{proof}

\begin{corollary}\label{cor:universalNNC}
 Let $(\mathcal{S},\Der,\Int)$ be a commutative integro-differential ring and let $\varphi:\mathcal{R}\to\mathcal{S}$ be a differential ring homomorphism such that $\varphi(\Q\Der{f})=\Int\Der\varphi(f)$ for all $f \in \mathcal{R}$.
 Assume in addition that
 \begin{gather*}
  \E\varphi(f_0)\Int\varphi(f_1)\dots\Int\varphi(f_n)=0\\
  \E\big(\Int\varphi(f_1)\dots\Int\varphi(f_n)\big)\Int\varphi(g_1)\dots\Int\varphi(g_m)=0
 \end{gather*}
 for all $f_0 \in \mathcal{R}_\mathrm{J}$ and all $f_1,\dots,f_n,g_1,\dots,g_m \in \mathcal{R}_\T$ where $n,m>0$.
 Then, there exists a unique integro-differential ring homomorphism $\eta:\mathrm{IDR}_{\Q,\shuffle}(\mathcal{R})\to\mathcal{S}$ such that $\eta(\iota(f))=\varphi(f)$ for all $f\in\mathcal{R}$.
\end{corollary}
\begin{proof}
 The proof of Corollary~\ref{cor:universalExt} carries over, using Theorem~\ref{thm:IDRNNC} instead of Theorem~\ref{thm:IDRext}.
 To show $\bar{\eta}(J)=0$, we observe in addition that the assumptions on $\varphi$ allow to compute $\bar{\eta}(1\otimes\ep(f_0\otimes{f})\otimes1\otimes\et) = 0$ and $\bar{\eta}(1\otimes1\otimes\ep(f\odot{g})\otimes\et) = 0$ for all $f_0 \in \mathcal{R}_\mathrm{J}$ and all pure tensors $f \in \mathcal{R}_\T^{\otimes{n}}$ and $g \in \mathcal{R}_\T^{\otimes{m}}$ with $n,m>0$.
\end{proof}

The construction of $\mathrm{IDR}_{\Q,\shuffle}(\mathcal{R})$ is particularly relevant in cases where the following minimality property plays a role.
\begin{lemma}\label{lem:minimalDR}
 There is no proper differential subring of $(\mathrm{IDR}_{\Q,\shuffle}(\mathcal{R}),\Der)$ that contains $\iota(\mathcal{R})$ and has surjective derivation.
\end{lemma}
\begin{proof}
 Assume $\mathcal{S} \subseteq \mathrm{IDR}_{\Q,\shuffle}(\mathcal{R})$ is a differential subring with $\Der\mathcal{S}=\mathcal{S}$ and $\iota(\mathcal{R})\subseteq\mathcal{S}$.
 Then, we show $\mathcal{R}\otimes\mathcal{R}_\T^{\otimes{n}}\subseteq\mathcal{S}$ by induction.
 For $n=0$, we have $\iota(\mathcal{R})\subseteq\mathcal{S}$.
 For $n>0$, we obtain $\mathcal{R}_\T\otimes\mathcal{R}_\T^{\otimes(n-1)}\subseteq\mathcal{S}$ by the induction hypothesis.
 By $\mathcal{S}\subseteq\Der\mathcal{S}$ and $\const_\Der(\mathrm{IDR}_{\Q,\shuffle}(\mathcal{R}))=\iota(\mathcal{C})\subseteq\mathcal{S}$, this implies $\mathcal{C}\otimes\mathcal{R}_\T^{\otimes{n}}\subseteq\mathcal{S}$.
 Since $\mathcal{S}$ is a ring, the claim follows by $\iota(\mathcal{R})\subseteq\mathcal{S}$.
\end{proof}

As indicated by the notation, $\Int\!_\Q$ and $\E_\Q$ depend on $\Q$ in an essential way.
For example, this is illustrated by the following property of $\E_\Q$.
\begin{lemma}
 The induced evaluation $\E_\Q$ in $(\mathrm{IDR}_{\Q,\shuffle}(\mathcal{R}),\Der,\Int\!_\Q)$ is multiplicative if and only if $\id-\Q\Der$ is multiplicative on $\mathcal{R}$.
\end{lemma}
\begin{proof}
 Since multiplication on $\mathcal{T}$ above is just the shuffle product, we have that $\mathcal{R}\otimes\widetilde{\mathcal{T}} \subseteq \mathrm{IDR}_{\Q,\shuffle}(\mathcal{R})$ is an ideal.
 By $\mathrm{IDR}_{\Q,\shuffle}(\mathcal{R}) = \iota(\mathcal{R}) \oplus (\mathcal{R}\otimes\widetilde{\mathcal{T}})$, any $f,g \in \mathrm{IDR}_{\Q,\shuffle}(\mathcal{R})$ can be written as $f=\iota(f_0)+\tilde{f}$ and $g=\iota(g_0)+\tilde{g}$ with $f_0,g_0 \in \mathcal{R}$ and $\tilde{f},\tilde{g} \in \mathcal{R}\otimes\widetilde{\mathcal{T}}$.
 In the same way, we have $fg=\iota(f_0g_0)+(\iota(f_0)\tilde{g}+\iota(g_0)\tilde{f}+\tilde{f}\tilde{g})$.
 By \eqref{eq:EvaluationNNC}, $\E_\Q$ is zero on $\mathcal{R}\otimes\widetilde{\mathcal{T}}$.
 For shorter notation, we abbreviate $e_\Q:=\id-\Q\Der$.
 Now, we compute $\E_\Q{f}=\iota(e_\Q{f_0})$, $\E_\Q{g}=\iota(e_\Q{g_0})$, and $\E_\Q{fg}=\iota(e_\Q{f_0g_0})$ by \eqref{eq:EvaluationNNC}.
 This implies the claim.
\end{proof}

Without any further modifications, this construction above can, for example, recover the integro-differential ring $(\mathcal{C}((x))[\ln(x)],\frac{d}{dx},\Int)$ of Example~\ref{ex:IDR}.

\begin{example}\label{ex:LaurentNNC}
 Consider $(\mathcal{R},\Der)=(\mathcal{C}((x)),\frac{d}{dx})$ with $\mathbb{Q}\subseteq\mathcal{C}$ and $\Q$ as in Example~\ref{ex:FreeLaurent}.
 In particular, $\mathcal{R}_\T=\mathcal{C}\frac{1}{x}$.
 Let $(\mathcal{S},\Der,\Int)=(\mathcal{C}((x))[\ln(x)],\frac{d}{dx},\Int)$ as in Example~\ref{ex:IDR}.
 Then, $\mathrm{IDR}_{\Q,\shuffle}(\mathcal{R})$ is isomorphic to $\mathcal{S}=\mathcal{R}[\ln(x)]$ by $\mu(f\otimes(\frac{1}{x})^{\otimes{n}})=f\frac{\ln(x)^n}{n!}$.
 In fact, it is straightforward to verify that this $\mu:\mathrm{IDR}_{\Q,\shuffle}(\mathcal{R})\to\mathcal{S}$ is even an integro-differential ring isomorphism.
\end{example}

\begin{example}\label{ex:Hurwitz}
 To construct the integro-differential ring of Hurwitz polynomials with coefficients in $\mathcal{C}$, which is a finite version of Hurwitz series \cite{Keigher}, we may start from any commutative ring of constants $(\mathcal{R},\Der)=(\mathcal{C},0)$, as in Example~\ref{ex:FreeConstant}, and take $\mathrm{IDR}_{0,\shuffle}(\mathcal{C})$.
\end{example}

\subsection{Internal closure in a commutative integro-differential ring}
\label{sec:closure}

In this section, we assume that $(\mathcal{S},\Der,\Int)$ is a commutative integro-differential ring such that $(\mathcal{S},\Der)$ is a differential ring extension of a fixed commutative differential ring $(\mathcal{R},\Der)$.
In other words, the inclusion map of $\mathcal{R}$ into $\mathcal{S}$ is a differential ring homomorphism.
Additionally, based on Theorem~\ref{thm:universal}, we fix $\eta:\mathrm{IDR}(\mathcal{R})\to\mathcal{S}$ as the unique integro-differential ring homomorphism such that $\eta(\iota(f))=f$ for all $f \in \mathcal{R}$.
As explained in Remark~\ref{rem:closure1}, $\eta$ maps the free integro-differential ring $\mathrm{IDR}(\mathcal{R})$ onto the internal integro-differential ring closure of $\mathcal{R}$ in $(\mathcal{S},\Der,\Int)$ and this internal closure $\bar{\mathcal{R}}:=\eta(\mathrm{IDR}(\mathcal{R}))$ is isomorphic to the quotient $\mathrm{IDR}(\mathcal{R})/\ker(\eta)$.
So, we provide a more explicit characterization of $\ker(\eta)$ in the following.
To this end, we additionally assume
\begin{equation}\label{eq:NNC}
 \const_\Der(\bar{\mathcal{R}})=\mathcal{C},
\end{equation}
i.e.\ that the internal closure has no new constants, in all that follows.

In the above setting, we can assign to any new constant from $\bar{\mathcal{C}}\setminus\iota(\mathcal{C})$ a value in $\mathcal{C}$ by computing the corresponding constant in $(\mathcal{S},\Der,\Int)$.
Explicitly, we have the following straightforward consequence.

\begin{lemma}\label{lem:closure}
 Let $J\subseteq\mathrm{IDR}(\mathcal{R})$ be the ideal generated by the constants
 \begin{equation}\label{eq:constantvalues}
  \begin{gathered}
   1\otimes\big(\ep(f)-\E{f_0}\Int{f_1}\dots\Int{f_n}\big)\otimes1\otimes\et \in \bar{\mathcal{C}} \quad\text{and}\\
   1\otimes1\otimes\big(\ep(s\odot{t})-\E\big(\Int{s_1}\dots\Int{s_m}\big)\Int{t_1}\dots\Int{t_l}\big)\otimes\et \in \bar{\mathcal{C}}
  \end{gathered}
 \end{equation}
 for all pure tensors $f=f_0\otimes\dots\otimes{f_n} \in M_1$ and $s=s_1\otimes\dots\otimes{s_m},t=t_1\otimes\dots\otimes{t_l} \in\widetilde{\mathcal{T}}$.
 Then, $J\subseteq\ker(\eta)$ and there is a unique integro-differential ring homomorphism $\tilde{\eta}:\mathrm{IDR}(\mathcal{R})/J\to\mathcal{S}$ with $\tilde{\eta}([\iota(f)]_J)=f$ for all $f \in \mathcal{R}$.
\end{lemma}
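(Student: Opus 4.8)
The plan is to verify the two assertions in turn, the only genuine input being the no-new-constants hypothesis \eqref{eq:NNC}; after that, everything reduces to the universal property of Theorem~\ref{thm:universal} together with the homomorphism theorem. First I would check that the two displayed families of expressions are well-defined constants in $\bar{\mathcal{C}}$. For this it suffices to see that the elements $\E{f_0}\Int{f_1}\dots\Int{f_n}$ and $\E(\Int{s_1}\dots\Int{s_m})\Int{t_1}\dots\Int{t_l}$ of $\mathcal{S}$ actually lie in $\mathcal{C}$. Indeed, by the explicit formulas for $\eta$ obtained in the uniqueness part of the proof of Theorem~\ref{thm:universal} (with $\varphi$ the inclusion), these two elements equal $\eta(1\otimes\ep(f)\otimes1\otimes\et)$ and $\eta(1\otimes1\otimes\ep(s\odot{t})\otimes\et)$, respectively. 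Since $1\otimes\ep(f)\otimes1\otimes\et$ and $1\otimes1\otimes\ep(s\odot{t})\otimes\et$ belong to $\bar{\mathcal{C}}=\const_\Der(\mathrm{IDR}(\mathcal{R}))$ by Lemma~\ref{lem:const}, and $\eta$ is an integro-differential ring homomorphism, their images are constants of $\bar{\mathcal{R}}=\eta(\mathrm{IDR}(\mathcal{R}))$; hypothesis \eqref{eq:NNC} then forces them into $\mathcal{C}$. Embedding $\mathcal{C}$ as scalars into $\mathcal{C}_1$ resp.\ $\mathcal{C}_2$, the differences $\ep(f)-\E{f_0}\Int{f_1}\dots\Int{f_n}\in\mathcal{C}_1$ and $\ep(s\odot{t})-\E(\Int{s_1}\dots\Int{s_m})\Int{t_1}\dots\Int{t_l}\in\mathcal{C}_2$ are well defined, so the listed generators genuinely lie in $\bar{\mathcal{C}}$.

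Next I would show $J\subseteq\ker(\eta)$ by checking that $\eta$ annihilates each generator. Writing $c:=\E{f_0}\Int{f_1}\dots\Int{f_n}\in\mathcal{C}$ and using $1\otimes{c}\otimes1\otimes\et=\iota(c)$ by \eqref{eq:DefIota}, the first generator is $1\otimes\ep(f)\otimes1\otimes\et-\iota(c)$, and applying $\eta$ gives $c-\eta(\iota(c))=c-c=0$ by the computation just recalled together with $\eta\circ\iota=\id$ on $\mathcal{C}\subseteq\mathcal{R}$. The second generator is handled identically with $c':=\E(\Int{s_1}\dots\Int{s_m})\Int{t_1}\dots\Int{t_l}$. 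As $\ker(\eta)$ is an ideal, the ideal $J$ generated by these elements is contained in it.

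Finally I would produce $\tilde{\eta}$ and argue uniqueness. Since $J$ is generated by constants it is an integro-differential ideal, so, integro-differential rings forming a variety, $\mathrm{IDR}(\mathcal{R})/J$ is an integro-differential ring and the canonical projection $\pi$ is an integro-differential ring homomorphism; the inclusion $J\subseteq\ker(\eta)$ then yields by the homomorphism theorem a ring homomorphism $\tilde{\eta}$ with $\tilde{\eta}\circ\pi=\eta$, and it is integro-differential because $\pi$ is surjective and both $\eta$ and $\pi$ commute with $\Der$ and $\Int$. This gives $\tilde{\eta}([\iota(f)]_J)=\eta(\iota(f))=f$ for all $f\in\mathcal{R}$. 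For uniqueness, any integro-differential ring homomorphism $\tilde{\eta}'$ with $\tilde{\eta}'([\iota(f)]_J)=f$ makes $\tilde{\eta}'\circ\pi$ an integro-differential ring homomorphism $\mathrm{IDR}(\mathcal{R})\to\mathcal{S}$ agreeing with the inclusion on $\iota(\mathcal{R})$; the uniqueness clause of Theorem~\ref{thm:universal} forces $\tilde{\eta}'\circ\pi=\eta=\tilde{\eta}\circ\pi$, and surjectivity of $\pi$ gives $\tilde{\eta}'=\tilde{\eta}$.

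The single substantive step — and hence the only place requiring care — is the first paragraph: it is \eqref{eq:NNC} that guarantees the generators are constants at all, and without it the differences would not even lie in $\mathcal{C}_1$ resp.\ $\mathcal{C}_2$. Everything after that is a formal consequence of the universal property and the fact that ideals generated by constants are automatically integro-differential, so I expect no computational difficulty.
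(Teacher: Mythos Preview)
Your argument is correct and follows essentially the same route as the paper: verify via \eqref{eq:NNC} that the listed generators lie in $\bar{\mathcal{C}}$, check that $\eta$ kills each of them (the paper refers back to the proof of Theorem~\ref{thm:universal} for the computation of $\eta$ on these elements, just as you do), and then pass to the quotient. You are slightly more explicit than the paper in spelling out why $J$ is an integro-differential ideal and in deriving uniqueness from Theorem~\ref{thm:universal}, but the substance is the same.
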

\begin{proof}
 In \eqref{eq:constantvalues}, the evaluations $\E{f_0}\Int{f_1}\dots\Int{f_n}$ and $\E\big(\Int{s_1}\dots\Int{s_m}\big)\Int{t_1}\dots\Int{t_l}$ in $(\mathcal{S},\Der,\Int)$ yield constants and hence lie in $\mathcal{C}$ by \eqref{eq:NNC}.
 Consequently, the quantities constructed in \eqref{eq:constantvalues} are indeed elements of $\bar{\mathcal{C}}$ for all pure tensors $f=f_0\otimes\dots\otimes{f_n} \in M_1$ and $s=s_1\otimes\dots\otimes{s_m},t=t_1\otimes\dots\otimes{t_l} \in\widetilde{\mathcal{T}}$.
 Therefore, the ideal $J\subseteq\mathrm{IDR}(\mathcal{R})$ is well-defined.
\par
 Note that $\Int{f_1}\dots\Int{f_n}=\eta(1\otimes1\otimes1\otimes{f_1^n})$.
 We clearly have $J\subseteq\ker(\eta)$, since $\eta$ maps every generator \eqref{eq:constantvalues} of $J$ to zero by definition, cf.\ the proof of Theorem~\ref{thm:universal}.
 Hence, $\eta$ induces a unique integro-differential ring homomorphism $\tilde{\eta}:\mathrm{IDR}(\mathcal{R})/J\to\mathcal{S}$ with $\tilde{\eta}([\iota(f)]_J)=f$ for all $f \in \mathcal{R}$.
\end{proof}

The following lemma provides a characterization of the equality $J=\ker(\eta)$ for the above ideal generated by constants \eqref{eq:constantvalues}.
Recall from Section~\ref{sec:independence} the definitions of $\sigma$ and $\tau$, which allow to abbreviate nested integrals
\[
 \sigma(W)=\Int{a_{w_1}}\dots\Int{a_{w_{|W|}}} \in \mathcal{S}
\]
and tensors $\tau(W)=a_{w_1}\otimes\dots\otimes{a_{w_{|W|}}} \in \mathcal{T}$ in terms of words $W=w_1\dots{w_{|W|}} \in \langle{I}\rangle$ using a fixed subset $\{a_i\ |\ i\in{I}\}\subseteq\mathcal{R}_\mathrm{T}$.
Note that we have $\sigma(W)=\eta(1\otimes1\otimes1\otimes\tau(W))$.
If $\linspan_\mathcal{C}\{a_i\ |\ i\in{I}\}=\mathcal{R}_\mathrm{T}$, then with these definitions it is clear that the ideal $J\subseteq\mathrm{IDR}(\mathcal{R})$ given in Lemma~\ref{lem:closure} is generated by all
\begin{equation}\label{eq:constantvaluesW}
 \begin{gathered}
  1\otimes\big(\ep(f\otimes\tau(W))-\E{f}\sigma(W)\big)\otimes1\otimes\et \in \bar{\mathcal{C}}\quad\text{and}\\
  1\otimes1\otimes\big(\ep(\tau(U)\odot\tau(V))-\E\sigma(U)\sigma(V)\big)\otimes\et \in \bar{\mathcal{C}}
 \end{gathered}
\end{equation}
with $f \in \mathcal{R}_\mathrm{J}$, $W \in \langle{I}\rangle$, and $U,V \in \langle{I}\rangle^*$.

\begin{lemma}\label{lem:closure2}
 Assume that $\mathcal{R}_\T$ is a free $\mathcal{C}$-module with basis $\{a_i\ |\ i\in{I}\}\subseteq\mathcal{R}_\mathrm{T}$.
 Let the ideal $J\subseteq\mathrm{IDR}(\mathcal{R})$ be generated by \eqref{eq:constantvaluesW}.
 Then, $J=\ker(\eta)$ holds if and only if the nested integrals $\{\sigma(W)\ |\ W \in \langle{I}\rangle\}\subseteq\mathcal{S}$ are $\mathcal{R}$-linearly independent.
\end{lemma}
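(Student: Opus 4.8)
The plan is to reduce the biconditional to a single structural statement about the quotient $\mathrm{IDR}(\mathcal{R})/J$. By Lemma~\ref{lem:closure} we have $J\subseteq\ker(\eta)$, and $\eta$ factors as $\tilde{\eta}\circ\pi_J$ through the canonical projection $\pi_J\colon\mathrm{IDR}(\mathcal{R})\to\mathrm{IDR}(\mathcal{R})/J$; hence $J=\ker(\eta)$ holds exactly when $\tilde{\eta}$ is injective. Since $\tilde{\eta}$ is $\mathcal{R}$-linear (via $\iota$) and sends $[1\otimes1\otimes1\otimes\tau(W)]_J$ to $\sigma(W)=\eta(1\otimes1\otimes1\otimes\tau(W))$, the lemma follows once I show that $\mathrm{IDR}(\mathcal{R})/J$ is a free $\mathcal{R}$-module with basis $\{[1\otimes1\otimes1\otimes\tau(W)]_J\mid W\in\langle{I}\rangle\}$: an $\mathcal{R}$-linear map out of a free module is injective if and only if it carries a basis to an $\mathcal{R}$-linearly independent set, so $\tilde{\eta}$ is injective if and only if $\{\sigma(W)\mid W\in\langle{I}\rangle\}$ is $\mathcal{R}$-linearly independent. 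Both implications of the lemma then come at once.

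For freeness I first show these classes generate. Since $\mathcal{R}_\T$ is free with basis $\{a_i\}$, the family $\{\tau(W)\}$ is a $\mathcal{C}$-basis of $\mathcal{T}$, and $\mathrm{IDR}(\mathcal{R})$ is generated as an $\mathcal{R}\bar{\mathcal{C}}$-module by the nested integrals $1\otimes1\otimes1\otimes\tau(W)$. Multiplication by a constant acts diagonally, i.e.\ $(1\otimes{c_1}\otimes{c_2}\otimes\et)\cdot(f_0\otimes{d_1}\otimes{d_2}\otimes{t})=f_0\otimes(c_1{\cdot}d_1)\otimes(c_2{\cdot}d_2)\otimes{t}$, because the length-zero tensor $\et$ makes the correction sum in \eqref{eq:GeneralizedShuffle} empty. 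By \eqref{eq:NNC}, each generating constant $1\otimes\ep(f\otimes\tau(W))\otimes1\otimes\et$ resp.\ $1\otimes1\otimes\ep(\tau(U)\odot\tau(V))\otimes\et$ is congruent modulo $J$ to $\iota$ of its value $\E{f}\sigma(W)\in\mathcal{C}$ resp.\ $\E\sigma(U)\sigma(V)\in\mathcal{C}$; since $J$ is an ideal and $c_1\mapsto1\otimes{c_1}\otimes1\otimes\et$, $c_2\mapsto1\otimes1\otimes{c_2}\otimes\et$ are algebra embeddings, every $1\otimes{c_1}\otimes{c_2}\otimes\et$ is congruent modulo $J$ to $\iota$ of a scalar in $\mathcal{C}$. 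Combined with the diagonal action, this reduces any $f_0\otimes{c_1}\otimes{c_2}\otimes\tau(W)$ modulo $J$ to a multiple of $1\otimes1\otimes1\otimes\tau(W)$ by an element of $\mathcal{R}$, so the stated classes generate $\mathrm{IDR}(\mathcal{R})/J$ over $\mathcal{R}$.

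For independence I construct a coordinate map that detects the reduced form. Let $\theta_1\colon\mathcal{C}_1\to\mathcal{C}$ and $\theta_2\colon\mathcal{C}_2\to\mathcal{C}$ be the corestrictions to $\mathcal{C}$ (valid by \eqref{eq:NNC}) of the homomorphisms $\eta_1$ and $\tilde{\eta}_2$ from the proof of Theorem~\ref{thm:universal}, and define the $\mathcal{R}$-linear map $\Phi\colon\mathrm{IDR}(\mathcal{R})\to\mathcal{R}\otimes_\mathcal{C}\mathcal{T}$ by $\Phi(f_0\otimes{c_1}\otimes{c_2}\otimes{t}):=f_0\theta_1(c_1)\theta_2(c_2)\otimes{t}$. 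Each generator of $J$ has $\theta_1$- resp.\ $\theta_2$-value zero, and using the diagonal action together with $\mathcal{C}$-bilinearity of multiplication one checks $\Phi(g\cdot{x})=\theta_1(c_1)\theta_2(c_2)\Phi(x)$ for a constant $g=1\otimes{c_1}\otimes{c_2}\otimes\et$; hence $J\subseteq\ker(\Phi)$. On a reduced representative we have $\Phi(\sum_W f_W\otimes1\otimes1\otimes\tau(W))=\sum_W f_W\otimes\tau(W)$, which by freeness of $\mathcal{R}\otimes_\mathcal{C}\mathcal{T}$ over $\mathcal{R}$ (Lemma~\ref{lem:tensorproduct}, using that $\{\tau(W)\}$ is a $\mathcal{C}$-basis of $\mathcal{T}$) vanishes only if all $f_W=0$. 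Thus a reduced representative lying in $J\subseteq\ker(\Phi)$ must be zero, which simultaneously yields $\ker(\Phi)=J$ and shows the generating classes are $\mathcal{R}$-linearly independent. This establishes the required freeness and completes the proof.

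The step I expect to be the main obstacle is the second paragraph: verifying that $J$ collapses the two constant algebras $\mathcal{C}_1$ and $\mathcal{C}_2$ cleanly onto $\mathcal{C}$. This rests both on the diagonal behaviour of multiplication by constants and, crucially, on \eqref{eq:NNC} guaranteeing that all occurring evaluations genuinely lie in $\mathcal{C}$ rather than in a larger ring of constants of $\mathcal{S}$; the map $\Phi$ of the third paragraph is then essentially a bookkeeping device recording these reductions and extracting the $\mathcal{R}$-coordinates.
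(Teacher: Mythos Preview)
Your proof is correct and follows essentially the same route as the paper: both arguments reduce the biconditional to showing that the canonical projection identifies $\mathrm{IDR}(\mathcal{R})/J$ with the $\mathcal{R}$-module $M=\mathcal{R}\otimes\mathcal{C}\otimes\mathcal{C}\otimes\mathcal{T}\cong\mathcal{R}\otimes_\mathcal{C}\mathcal{T}$, after which the equivalence with $\mathcal{R}$-linear independence of $\{\sigma(W)\}$ is immediate. The only difference is presentational: the paper asserts in one sentence that ``factoring by $J$ amounts to assigning to any new constant a value in $\mathcal{C}$'' so that $\tilde{\pi}|_M$ is bijective, whereas you make this explicit by constructing the splitting map $\Phi$ via the corestricted homomorphisms $\theta_1,\theta_2$ and checking it kills $J$.
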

\begin{proof}
 Let $\tilde{\eta}$ be given by Lemma~\ref{lem:closure}.
 Since $J\subseteq\ker(\eta)$, the equality $J=\ker(\eta)$ is equivalent to $\tilde{\eta}$ being injective.
 By \eqref{eq:constantvalues}, factoring by $J$ amounts to assigning to any new constant from $\bar{\mathcal{C}}\setminus\iota(\mathcal{C})$ a value in $\mathcal{C}$ in such a way that the resulting canonical projection $\tilde{\pi}:\mathrm{IDR}(\mathcal{R})\to\mathrm{IDR}(\mathcal{R})/J$ is bijective when restricted to the $\mathcal{C}$-submodule
 \begin{equation}\label{eq:defM}
  M:=\mathcal{R}\otimes\mathcal{C}\otimes\mathcal{C}\otimes\mathcal{T}\subseteq\mathrm{IDR}(\mathcal{R}).
 \end{equation}
 Altogether, by $\eta=\tilde{\eta}\circ\tilde{\pi}$, we have $J=\ker(\eta)$ if and only if $\eta$ restricted to $M$ is injective.
 It remains to show that injectivity of $\eta|_M$ is equivalent to $\mathcal{R}$-linear independence of $\{\sigma(W)\ |\ W \in \langle{I}\rangle\}$.
\par
 By assumption on $\mathcal{R}_\T$, $\mathcal{T}$ is a free $\mathcal{C}$-module with basis $\{\tau(W)\ |\ W \in \langle{I}\rangle\}$.
 So, by Lemma~\ref{lem:tensorproduct}, we can write any $f \in M$ uniquely as $f=\sum_{i=1}^nf_i\otimes1\otimes1\otimes\tau(W_i)$, where $W_i\in\langle{I}\rangle$ are pairwise distinct and $f_i\in\mathcal{R}$ are nonzero, and we note that $\eta(f)=\sum_{i=1}^nf_i\sigma(W_i)$.
 If $\eta|_M$ is injective and $\sum_{i=1}^nf_i\sigma(W_i)$ is zero in $\mathcal{S}$ for some nonzero $f_i\in\mathcal{R}$ and pairwise distinct $W_i\in\langle{I}\rangle$, then $\sum_{i=1}^nf_i\otimes1\otimes1\otimes\tau(W_i)=0$.
 So, $n=0$ follows by uniqueness.
 Conversely, assume that $\{\sigma(W)\ |\ W \in \langle{I}\rangle\}\subseteq\mathcal{S}$ is $\mathcal{R}$-linearly independent and $f \in M\cap\ker(\eta)$.
 By $0=\eta(f)=\sum_{i=1}^nf_i\sigma(W_i)$, it then follows that $n=0$ and hence $f=0$.
\end{proof}

In order to make use of Lemma~\ref{lem:closure2}, we now focus on a large class of differential rings $(\mathcal{R},\Der)$, for which Theorem~\ref{thm:independence} shows that nested integrals of this form are indeed $\mathcal{R}$-linearly independent.
If $\mathcal{R}$ is a field of characteristic zero, then independence also follows by \cite[Thm.~1]{DeneufchatelEtAl}.

\begin{theorem}\label{thm:closure}
 Assume that $\mathcal{R}_\T$ is a free $\mathcal{C}$-module with basis $\{a_i\ |\ i\in{I}\}\subseteq\mathcal{R}_\mathrm{T}$.
 Let the ideal $J$ be generated by all constants \eqref{eq:constantvaluesW}
 with $f \in \mathcal{R}_\mathrm{J}$, $W \in \langle{I}\rangle$, and $U,V \in \langle{I}\rangle^*$.
 Assume further that the differential ring $(\mathcal{R},\Der)$ is such that, for every nonzero element, the differential ideal generated by it contains a nonzero constant.
 Then, $\tilde{\eta}:\mathrm{IDR}(\mathcal{R})/J\to\bar{\mathcal{R}}$ given by Lemma~\ref{lem:closure} is an integro-differential ring isomorphism.
\end{theorem}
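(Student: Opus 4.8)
The plan is to reduce the statement to the two preceding lemmas together with the independence result of Theorem~\ref{thm:independence}, since the substantive work has already been carried out. First I would note that $\tilde{\eta}$ is surjective onto $\bar{\mathcal{R}}$: by Lemma~\ref{lem:closure} it satisfies $\eta = \tilde{\eta}\circ\tilde{\pi}$, where $\tilde{\pi}\colon\mathrm{IDR}(\mathcal{R})\to\mathrm{IDR}(\mathcal{R})/J$ is the canonical projection, and since $\tilde{\pi}$ is surjective we obtain $\im(\tilde{\eta})=\im(\eta)=\bar{\mathcal{R}}$. Hence $\tilde{\eta}$ will be an integro-differential ring isomorphism onto $\bar{\mathcal{R}}$ as soon as it is injective, and by the homomorphism theorem injectivity of $\tilde{\eta}$ is equivalent to the equality $J=\ker(\eta)$.

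Next I would invoke Lemma~\ref{lem:closure2}: under the standing assumption that $\mathcal{R}_\T$ is free with basis $\{a_i\ |\ i\in{I}\}$ and with $J$ generated by the constants \eqref{eq:constantvaluesW}, the equality $J=\ker(\eta)$ holds precisely when the nested integrals $\{\sigma(W)\ |\ W\in\langle{I}\rangle\}$ are $\mathcal{R}$-linearly independent in $\mathcal{S}$. Thus the entire statement reduces to establishing this linear independence.

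Finally I would verify that the hypotheses of Theorem~\ref{thm:independence}.\ref{item:independence} are met and apply it directly. Since $\{a_i\ |\ i\in{I}\}$ is a basis of $\mathcal{R}_\T$, it is in particular $\mathcal{C}$-linearly independent, and the $\mathcal{C}$-submodule it generates is all of $\mathcal{R}_\T$, which is trivially complemented (by the zero submodule) in $\mathcal{R}_\T$. The one remaining hypothesis, namely that in $(\mathcal{R},\Der)$ the differential ideal generated by any nonzero element contains a nonzero constant, is exactly the additional assumption made in the theorem. Therefore Theorem~\ref{thm:independence}.\ref{item:independence} yields the $\mathcal{R}$-linear independence of $\{\sigma(W)\ |\ W\in\langle{I}\rangle\}$, whence $J=\ker(\eta)$ and $\tilde{\eta}$ is the desired isomorphism.

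I do not anticipate a genuine obstacle, as the construction was arranged precisely so that the deep content resides in Theorem~\ref{thm:independence} and Lemma~\ref{lem:closure2}; the only point requiring minor care is confirming the complementedness condition in Theorem~\ref{thm:independence}.\ref{item:independence}, which is immediate once $\{a_i\ |\ i\in{I}\}$ is assumed to be a basis rather than merely a generating set.
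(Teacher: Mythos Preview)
Your proposal is correct and follows essentially the same route as the paper: surjectivity of $\tilde{\eta}$ by construction, then reduction of injectivity to $J=\ker(\eta)$ via Lemma~\ref{lem:closure2}, and finally $\mathcal{R}$-linear independence of the nested integrals via Theorem~\ref{thm:independence}.\ref{item:independence}. Your explicit verification of the complementedness hypothesis (the span of the basis being all of $\mathcal{R}_\T$, hence trivially complemented) is a helpful detail that the paper leaves implicit.
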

\begin{proof}
 By construction, $\tilde{\eta}:\mathrm{IDR}(\mathcal{R})/J\to\bar{\mathcal{R}}$ is a surjective integro-differential ring homomorphism.
 By Theorem~\ref{thm:independence}, $\{\sigma(W)\ |\ W \in \langle{I}\rangle\}\subseteq\mathcal{S}$ is $\mathcal{R}$-linearly independent.
 So, Lemma~\ref{lem:closure2} implies $J=\ker(\eta)$, i.e.\ $\tilde{\eta}$ is injective on $\mathrm{IDR}(\mathcal{R})/J$.
\end{proof}

In principle, an analog of Theorem~\ref{thm:closure} could also be worked out for $\mathrm{IDR}_\Q(\mathcal{R})$ instead of $\mathrm{IDR}(\mathcal{R})$, if $\Q$ agrees with $\Int$ on $\Der\mathcal{R} \subseteq\mathcal{S}$.
Analogously to what we have done in Section~\ref{sec:ExtQIDR} explicitly, one could modify the construction of $\mathrm{IDR}(\mathcal{R})$ to directly respect the relations of constants imposed by $J$ instead of taking the quotient $\mathrm{IDR}(\mathcal{R})/J$.
The construction shown in Section~\ref{sec:NNC} is in fact a special case of this.

Finally, we now look at concrete internal integro-differential closures.
We start with a rather simple case, continuing Example~\ref{ex:FreeLaurent}, where $\mathcal{R}_\T$ is cyclic and all relevant evaluations involving nested integrals are in fact zero in $\mathcal{S}$.
The second example will have neither of these properties and will treat the integro-differential ring formed from rational functions and their nested integrals mentioned at the very beginning of the paper.

\begin{example}\label{ex:LaurentClosure}
 With $\mathbb{Q}\subseteq\mathcal{C}$, let $(\mathcal{R},\Der):=(\mathcal{C}((x)),\frac{d}{dx})$ and $(\mathcal{S},\Der,\Int):=(\mathcal{R}[\ln(x)],\frac{d}{dx},\Int)$ as in Example~\ref{ex:FreeLaurent}.
 Note that $\bar{\mathcal{R}}=\mathcal{S}$ in this case.
 Setting $a_0:=\frac{1}{x}$ and $I:=\{0\}$, we have $\sigma(W)=\frac{\ln(x)}{n!}$ for $W \in \langle{I}\rangle$ where $n=|W|$.
 Consequently, $\E{f}\sigma(U)=0$ and $\E\sigma(U)\sigma(V)=0$ by \eqref{eq:LaurentEval} for all $f\in\mathcal{R}$ and $U,V \in \langle{I}\rangle^*$.
 Since $\mathcal{R}_\mathrm{J}$ was defined in Example~\ref{ex:FreeLaurent} as the set of series without constant term, we also have $\E{f}\sigma(W)=0$ by \eqref{eq:LaurentEval} for all $f \in \mathcal{R}_\mathrm{J}$ when $W$ is the empty word.
 This explains all evaluations occurring in \eqref{eq:constantvaluesW}.
 Then, if $\mathcal{C}$ is Noetherian, Theorem~\ref{thm:closure} can be applied based on Lemma~\ref{lem:Noetherian} and shows that the quotient $\mathrm{IDR}(\mathcal{R})/\ker(\eta)$ can be obtained by setting all constants $1\otimes{c_1}\otimes1\otimes\et$ and $1\otimes1\otimes{c_2}\otimes\et$ with $c_1\in\ep(M_1)$ and $c_2\in\ep(M_2)$ to zero, which agrees with the choice in \eqref{eq:gensCbar}.
 So, this quotient is isomorphic to $\mathrm{IDR}_{\Q,\shuffle}(\mathcal{R})$ as discussed in Example~\ref{ex:LaurentNNC}.
 Since $\eta$ is surjective onto $\mathcal{S}$, this quotient is isomorphic to $(\mathcal{S},\Der,\Int)$ as integro-differential ring.
\end{example}

If $J=\ker(\eta)$ holds as in Lemma~\ref{lem:closure2}, then the quotient $\mathrm{IDR}(\mathcal{R})/\ker(\eta)$ is canonically isomorphic to $\mathrm{IDR}_{\Q,\shuffle}(\mathcal{R})$ as a differential module.
However, we do not obtain an integro-differential ring homomorphism this way in general, since the constants \eqref{eq:gensCbar} need not be contained in $\ker(\eta)$.

\begin{example}\label{ex:hyperlogs}
 Let $\mathcal{C}$ be a field of characteristic zero, let $(\mathcal{R},\Der)$ be the field of rational functions $(\mathcal{C}(x),\frac{d}{dx})$, and let $(\mathcal{S},\Der,\Int):=(\mathcal{C}((x))[\ln(x)],\frac{d}{dx},\Int)$ as in Example~\ref{ex:IDR}.
 On $\mathcal{R}$, we consider the quasi-integration $\Q$ explained in the introduction.
 Explicitly, $\Q$ is defined such that elements in $\Q\mathcal{R}$ do not involve a constant term in their partial fraction decomposition and $\ker(\Q)$ consists of proper rational functions having only simple poles.
 In fact, on $\Der\mathcal{R}$, this $\Q$ coincides with the restriction of $\Int$ from $\mathcal{S}$.
 Again, $\mathrm{IDR}(\mathcal{R})/\ker(\eta)$ is isomorphic to the internal integro-differential ring closure $\bar{\mathcal{R}}$ of $\mathcal{C}(x)$ in $(\mathcal{S},\Der,\Int)$ and we determine $\ker(\eta)$ based on Theorem~\ref{thm:closure}.
 A particular basis of $\mathcal{R}_\T=\ker(\Q)$ is given by all $\frac{x^k}{p}$ such that $p\in\mathcal{C}[x]$ is monic irreducible and $0\le{k}<\deg(p)$.
 We take a closer look at the evaluations appearing in \eqref{eq:constantvaluesW}.
 With the chosen $\Q$, any element of $\mathcal{R}_\mathrm{J}=\im(\Q)$ has vanishing evaluation \eqref{eq:LaurentEval} in $(\mathcal{S},\Der,\Int)$ and any element of $\mathcal{R}_\T=\ker(\Q)$ lies in $\frac{1}{x}\mathcal{C}[[x]]\subseteq\mathcal{S}$.
 The latter property implies $\eta(1\otimes1\otimes1\otimes{t}) \in \mathcal{C}[[x]][\ln(x)]$ for all $t \in \widetilde{\mathcal{T}}$.
 By \eqref{eq:Evaluation}, we also have $\E\eta(1\otimes1\otimes1\otimes{t})=0$ for all $t \in \widetilde{\mathcal{T}}$.
 Now, \eqref{eq:LaurentEval} implies that $\E\eta(f\otimes1\otimes1\otimes{t})=0$ and $\E\eta(1\otimes1\otimes1\otimes{s})\eta(1\otimes1\otimes1\otimes{t})=0$ for all $f \in \mathcal{C}(x)\cap\mathcal{C}[[x]]$ and $s,t \in \widetilde{\mathcal{T}}$.
 Moreover, $\eta(1\otimes1\otimes1\otimes(t\otimes\frac{1}{x})) \in \ln(x)\mathcal{C}[[x]][\ln(x)]$ for all $t \in \mathcal{T}$ implies $\E\eta(f\otimes1\otimes1\otimes(t\otimes\frac{1}{x}))=0$ for all $f \in \mathcal{C}(x)$ and $t \in \mathcal{T}$.
 Altogether, this explains many evaluations appearing in \eqref{eq:constantvaluesW} to be zero.
 The remaining ones involve more computation and are given by $\E\frac{1}{x^k}\Int{f_1}\dots\Int{f_n}$ for $f_1,\dots,f_n \in \mathcal{R}_\T$ with $k,n\ge1$ and $f_n \in \mathcal{C}[[x]]$.
 In particular, they are not all zero, as $\E\tfrac{1}{x}\Int\tfrac{1}{x+1} = 1$ illustrates.
 By Theorem~\ref{thm:closure}, $J=\ker(\eta)$ is generated by the constants in \eqref{eq:constantvaluesW} and factoring yields an integro-differential ring isomorphism $\tilde{\eta}:\mathrm{IDR}(\mathcal{R})/J\to\bar{\mathcal{R}}$.
\end{example}

\subsubsection*{Acknowledgements}
This research was funded in part by the Austrian Science Fund (FWF): P~27229, P~31952, and P~37258.
Part of this work was done while both authors were at the Institute for Algebra of the Johannes Kepler University Linz, Austria.

\end{document}